\newtheorem{theorem}{Theorem}[section]
\newtheorem{proposition}[theorem]{Proposition}
\newtheorem{corollary}[theorem]{Corollary}
\newtheorem{lemma}[theorem]{Lemma}
\newtheorem{remark}[theorem]{Remark}
\def\l{\lambda}
\def\N{\mathbb{N}}
\def\R{\mathbb{R}}
\def\eps{\varepsilon}
\def\spc{H^1_{rad}(B)}
\def\cone{\mathcal{C}}
\newcommand{\cC}{{\mathcal C}}
\newcommand{\cH}{{\mathcal H}}
\newcommand{\cL}{{\mathcal L}}
\title[Neumann problems without growth restrictions]{\sc Increasing radial solutions for Neumann problems without growth restrictions}
\author{Denis Bonheure \and Benedetta Noris
\and Tobias Weth}
\address{
  D{\'e}partement de Math{\'e}matique\\
  Universit{\'e} libre de Bruxelles, CP 214\\
  Boulevard du Triomphe, B-1050 Bruxelles, Belgium}
\email{denis.bonheure@ulb.ac.be}
\address{Dipartimento di Matematica e Applicazioni, Universit\`a degli Studi di Milano-Bicocca, via Bicocca degli Arcimboldi 8, 20126 Milano, Italy}
\email{benedetta.noris1@unimib.it}
\thanks{B. Noris is partially supported by MIUR, Project ``Metodi Variazionali ed Equazioni Differenziali Non Lineari".}
\address{Institut f\"ur Mathematik, Goethe-Universit\"at Frankfurt, Robert-Mayer-Str. 10, 60054 Frankfurt, Germany}
\email{weth@math.uni-frankfurt.de}
\begin{document}

\begin{abstract}
We study the existence of positive increasing radial solutions for superlinear Neumann problems in the ball. We do not impose any growth
condition on the nonlinearity at infinity and our assumptions allow for interactions with the spectrum. In our approach we use both topological
and variational arguments, and we overcome the lack of compactness by
considering the cone of nonnegative, nondecreasing radial functions of $H^1(B)$.
\end{abstract}
\keywords{supercritical problems, Krasnosel$'$ski{\u\i} fixed point, invariant cone, gradient flow}

\maketitle

\section{Introduction}
In this paper we are mainly concerned with the semilinear Neumann problem
\begin{equation}\label{eq:main_equation}
\left\{\begin{array}{ll}
        -\Delta u+u =a(|x|)f(u) \quad &\text{ in } B \\
	u>0 &\text{ in } B\\
	\partial_\nu u=0 &\text{ on } \partial B,
       \end{array}
\right.
\end{equation}
where $B$ is the unit ball in $\R^N$, $N\geq2$. We study the
existence of radial solutions of \eqref{eq:main_equation} under
 suitable assumptions on $a$ and $f$. The problem has been studied
extensively in the case where $f(u)=u^p$ with some $p>1$ and $a \equiv
1$. Note that in this case there always exists the constant solution
$u \equiv 1$ of (\ref{eq:main_equation}). This already shows that   
the solvability of (\ref{eq:main_equation}) depends in a quite
different way on the data than in 
the case of Dirichlet boundary conditions, in which nontrivial
solutions only exist in the subcritical range 
\begin{equation}
  \label{eq:6}
p<\frac{N+2}{N-2} \qquad \text{if $N \ge 3$}  
\end{equation}
as a consequence of Pohozaev's identity, see \cite{Pohozaev}. Note that the subcriticality assumption (\ref{eq:6}) ensures that the
problem (\ref{eq:main_equation}) with $f(u)=u^p$ is 
accessible by variational methods, i.e., the (formal) energy functional
corresponding to (\ref{eq:main_equation}) is well defined in
$H^1(B)$. Moreover, due to the compact embedding $H^1(B) \hookrightarrow
L^{p+1}(B)$, the existence of a solution to (\ref{eq:main_equation})
follows in a standard way through the mountain pass theorem \cite{ambrosetti-rabinowitz:73}
if $a$ is a positive continuous function on $\overline B$. In the
critical and supercritical case, namely when (\ref{eq:6}) does not hold, most
of the available results on the existence of positive solutions are
devoted to perturbative cases where either a small diffusion constant is added in front
of $-\Delta$, see \cite[Chapter 9 and 10]{Ambrosetti-Malchiodi} and the references therein or a slightly supercritical exponent is considered, see e.g. \cite{delpinomussopistoia}.
The present paper deals with the
nonperturbative problem and is therefore more closely related
to the recent works \cite{Barutello-Secchi-Serra,SerraTilli}. In \cite{Barutello-Secchi-Serra}, the authors
considered the Neumann problem for the
H\'enon equation $-\Delta u+ u = |x|^{\alpha} u^p$, and they apply a
shooting method to prove that
this problem admits a positive and radially increasing solution for
every $p>1$ and $\alpha>0$. Very recently, Serra and Tilli
\cite{SerraTilli} showed the existence of the same type of solutions
for problem (\ref{eq:main_equation}), provided that $a$ is an
increasing positive function with $a(0)>0$ and $f \in \cC^1([0,\infty))$ is such
that 
\begin{align}
  \label{eq:7}
f(0)=f'(0)=0,\quad f'(t)t-f(t)>0 \text{ and } f(t)t \ge \mu F(t):= \int_0^t
f(s)\,ds, 
\end{align}
for $t\in (0,\infty)$, with some constant $\mu>2$. These assumptions, which hold for $f(u)= u^p$, $p>1$, play a crucial role in the approach of
Serra and Tilli, who minimize the energy functional corresponding to
(\ref{eq:main_equation}) among nonnegative, radial and
radially nondecreasing functions within the associated Nehari
manifold. Reducing to nonnegative and nondecreasing radial trial
functions in $H^1(B)$ gives rise to boundedness and compactness properties even for
supercritically growing nonlinearities. It is not
obvious that restrictions of this type still lead to a solution of
(\ref{eq:main_equation}), but Serra and Tilli could prove this with
the help of assumptions (\ref{eq:7}).

The purpose of the present paper is twofold. First, we generalize the
results of Serra and Tilli to a wider class of functions $f$ by means
of a new approach based on topological fixed point theory and invariance properties of the cone
of nonnegative, nondecreasing radial functions in $H^1(B)$. In
particular, we give a rather short proof of the existence
of an increasing radial solution of (\ref{eq:main_equation}). More precisely, we first establish a priori estimates on
the solutions of \eqref{eq:main_equation} in this cone and then apply
a suitable version of Krasnosel$'$ski{\u\i}'s fixed point theorem (see \cite{Kwong}). The second aim of this paper is related to the case $a$
constant, say $a \equiv 1$, where any
fixed point of $f$ gives rise to a constant solution of
(\ref{eq:main_equation}). In this case we will be concerned with the existence of
{\em nonconstant} increasing solutions. 
To state our main results, we now list our assumptions on $a$ and $f$:
\begin{itemize}
\item[($a$)] $a \in C^1([0,1],\R)$ is nondecreasing and
  $a_0:=a(0)>0$;
\item[($f1$)] $f\in C^1([0,+\infty),\R)$, $f(0)=0$ and $\displaystyle{ f'(0)=\lim_{s\to0^+} \frac{f(s)}{s}=0 }$;
\item[($f2$)] $f$ is nondecreasing;
\item[($f3$)] $\displaystyle{  \liminf \limits_{s\to+\infty} \frac{f(s)}{s}>\frac{1}{a_0}  }$.
\end{itemize}
In particular, these assumptions on $f$ allow the nonlinearity to have
supercritical growth as well as resonant growth, i.e. $\lim_{s\to+\infty}
  f(s)/s = \lambda$ with $\lambda>1$ being a Neumann eigenvalue
  of the operator $-\Delta + 1$ in $B$, and they are much weaker than
  (\ref{eq:7}). In particular, $f$ may have multiple positive fixed
  points and the quotient $f(s)/s$ may 
  oscillate between values in an interval of the form
  $[c,\infty)$ with $c>1/a_0$ for large $s$, whereas
  (\ref{eq:7}) forces 
  this quotient to be strictly increasing. Our first existence result for
  (\ref{eq:main_equation}) is the following.
\begin{theorem}\label{theorem:non_constant_a}
Assume ($a$), ($f1$), ($f2$), ($f3$) and suppose moreover that
$a(|x|)$ is nonconstant. Then there exists at least one nonconstant
nondecreasing radial
solution of \eqref{eq:main_equation}.
\end{theorem}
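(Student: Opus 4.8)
The plan is to recast \eqref{eq:main_equation} as a fixed point equation on the cone $\mathcal{C}\subset H^1(B)$ of nonnegative, nondecreasing, radial functions, and to run a fixed point index argument on $\mathcal{C}$ of Krasnosel$'$ski{\u\i} type (cf. \cite{Kwong}). For $u\in\mathcal{C}$ let $Tu\in H^1(B)$ solve the linear Neumann problem $-\Delta w+w=a(|x|)f(u)$ in $B$, $\partial_\nu w=0$ on $\partial B$; then fixed points of $T$ in $\mathcal{C}$ are exactly the nonnegative nondecreasing radial solutions of $-\Delta u+u=a(|x|)f(u)$ (a priori weak, but classical by elliptic regularity, since a bounded nondecreasing radial $H^1$-function is continuous on $\overline B$). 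First I would check $T(\mathcal{C})\subset\mathcal{C}$: by ($a$), ($f1$), ($f2$) the datum $a(|x|)f(u)$ is nonnegative, radial and nondecreasing in $|x|$, and the Neumann resolvent of $-\Delta+1$ preserves this class — nonnegativity by the maximum principle, and monotonicity because $r^{N-1}(Tu)'(r)=\psi(r):=\int_0^r s^{N-1}\bigl(Tu-af(u)\bigr)\,ds$ vanishes at $r=0$ and $r=1$, and at a negative interior minimum of $\psi$ one would have $Tu=af(u)$ and $(Tu)'<0\le(af(u))'$, so $Tu-af(u)$ would change sign there from positive to negative, making the point a local maximum of $\psi$ — impossible (differentiating $af(u)$ requires first treating smooth data and passing to the limit). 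The decisive point, and the reason for working in $\mathcal{C}$, is that $T$ is compact \emph{with no growth condition on $f$}: every $u\in\mathcal{C}$ satisfies $\|u\|_{L^\infty(B)}=u(1)\le C\|u\|_{H^1(B)}$, since a radial nondecreasing $H^1$-function attains its maximum on the shell $\{\tfrac12<|x|<1\}$, where the one-dimensional embedding $H^1\hookrightarrow C$ applies. Hence $u\mapsto a(|x|)f(u)$ sends bounded subsets of $\mathcal{C}$ into bounded subsets of $L^\infty(B)\subset L^2(B)$, while $(-\Delta+1)^{-1}\colon L^2(B)\to H^1(B)$ is compact; continuity of $T$ follows by dominated convergence, using the uniform continuity of $f$ on bounded intervals.

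Next I would compute the fixed point index of $T$ on two balls of $\mathcal{C}$. Near the origin, ($f1$) gives $f(s)=o(s)$ as $s\to 0^+$; testing $-\Delta Tu+Tu=af(u)$ with $Tu$ and using $\|u\|_{L^\infty}\le C\|u\|_{H^1}$ gives $\|Tu\|_{H^1}\le\tfrac12\|u\|_{H^1}$ on $\mathcal{C}\cap\partial B_\rho$ for $\rho$ small, so $Tu\ne\mu u$ for all $\mu\ge1$ there and $i(T,\mathcal{C}\cap B_\rho,\mathcal{C})=1$. For large norms I would use the push-out direction $e\equiv1\in\mathcal{C}$ and show that, for $R$ large, $u\ne Tu+te$ for all $u\in\mathcal{C}\cap\partial B_R$ and all $t\ge0$, which forces $i(T,\mathcal{C}\cap B_R,\mathcal{C})=0$. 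Indeed $u=Tu+te$ means $-\Delta u+u=a(|x|)f(u)+t$; since ($f3$) gives $f(s)\ge (a_0^{-1}+\delta)s-C_0$ for all $s\ge0$ with some $\delta,C_0>0$, integrating over $B$ yields $a_0\delta\int_B u\le a_0C_0|B|-t|B|$, so both $t\le t_0:=a_0C_0$ and $\int_B u$ are bounded independently of $R$; it then suffices to have an a priori bound $\|u\|_{H^1}\le M$ for every $u\in\mathcal{C}$ solving $-\Delta u+u=a(|x|)f(u)+t$ with $0\le t\le t_0$, and to take $R>M$. Additivity of the index then gives $i\bigl(T,\mathcal{C}\cap(B_R\setminus\overline{B_\rho}),\mathcal{C}\bigr)=-1\ne 0$, hence a fixed point $u\in\mathcal{C}$ with $\rho<\|u\|_{H^1}<R$.

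Such a $u$ solves \eqref{eq:main_equation}: it is not identically zero since $\|u\|_{H^1}>\rho$, so $a(|x|)f(u)$ is nonnegative and $\not\equiv0$ (if $f(u)\equiv0$, then $u$ would solve the homogeneous Neumann problem $-\Delta u+u=0$, forcing $u\equiv0$), and the strong maximum principle for $-\Delta+1$ gives $u>0$ in $B$. Moreover $u$ cannot be a constant $c$: then $c=a(|x|)f(c)$ for all $x\in B$ would force $f(c)>0$ and $a\equiv c/f(c)$, contradicting that $a$ is nonconstant. Hence $u$ is a nonconstant nondecreasing radial solution of \eqref{eq:main_equation}, as claimed.

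I expect the a priori estimate invoked above to be the main obstacle. The $L^1$-bound $\int_B u\le C$ is immediate from ($f3$), as shown, but upgrading it — with $f$ of arbitrary growth — to an $H^1$-bound (equivalently, via the shell estimate of the first paragraph, to an $L^\infty$-bound) is delicate: one must combine ($f2$) and elliptic estimates with the Neumann condition $u'(1)=0$ and the radial ODE, which together force a solution with $u(1)$ large to stay close to its maximum on a boundary shell $\{1-\eta<|x|<1\}$ too thick to be compatible with the $L^1$-bound, so that $u(1)$, and hence $\|u\|_{H^1}$, remains bounded. This interplay between the boundary condition and the monotone cone structure should be the technical core of the proof.
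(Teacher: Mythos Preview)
Your overall plan coincides with the paper's: recast~\eqref{eq:main_equation} as a fixed point problem for $T$ on the cone $\mathcal{C}$, show $T(\mathcal{C})\subset\mathcal{C}$ and compactness via the $L^\infty$-bound on $\mathcal{C}$, then run a Krasnosel$'$ski{\u\i}-type index argument using ($f1$) near zero and ($f3$) at infinity. Your identification of the missing piece is also correct.

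The gap is the a priori $L^\infty$ (hence $H^1$) bound, which you defer and whose proof you over-anticipate. The boundary-shell heuristic you sketch is much heavier than what is actually needed. The paper's argument is a two-line ODE estimate: for any solution $u\in\mathcal{C}$ of $-\Delta u+u=a(|x|)f(u)+t$ with $t\ge0$, the radial equation gives
\[
(r^{N-1}u')'=r^{N-1}\bigl(u-a(r)f(u)-t\bigr)\le r^{N-1}u,
\]
so integrating from $0$ yields
\[
r^{N-1}u'(r)\le\int_0^r s^{N-1}u(s)\,ds\le\frac{1}{|\partial B|}\int_B u\,dx\le\frac{K_1}{|\partial B|},
\]
with $K_1$ the $L^1$-bound you already obtained from ($f3$). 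Since $u'\ge0$, this shows $\|u'\|_{L^1(B)}=|\partial B|\int_0^1 u'(r)r^{N-1}\,dr\le K_1$, hence $\|u\|_{W^{1,1}(B)}\le 2K_1$, and the cone embedding $\mathcal{C}\hookrightarrow L^\infty$ (your shell argument) gives $\|u\|_{L^\infty}\le CK_1$. The $H^1$-bound then follows by testing the equation with $u$. No interplay of ($f2$) with boundary behaviour is required; neither ($f2$) nor elliptic estimates enter this step.

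One minor remark on your monotonicity argument for $T(\mathcal{C})\subset\mathcal{C}$: the paper instead differentiates the radial equation and applies a maximum principle to $z=(Tu)'$, which avoids the approximation-by-smooth-data detour you allude to; your $\psi$-argument is also fine once that detour is carried out.
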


The existence of solutions for such general
nonlinearities $f$ underscore the difference between Dirichlet and
Neumann boundary conditions for supercritical elliptic problems, 
see also the related recent papers
\cite{BonheureSerra,GrossiNoris,Secchi}. In contrast to the method of Serra and Tilli in \cite{SerraTilli}, 
our approach based on topological fixed point theory does not require the (formal) variational
structure of problem (\ref{eq:main_equation}) and therefore applies
to the more general problem
\begin{equation}\label{eq:main_equation-variant}
\left\{\begin{array}{ll}
        -\Delta u+b(|x|)\,  x \!\cdot\! \nabla u  + u =a(|x|)f(u) \quad &\text{ in } B \\
	u>0 &\text{ in } B\\
	\partial_\nu u=0 &\text{ on } \partial B,
       \end{array}
\right.
\end{equation}
provided that the following assumption holds:
\begin{itemize} 
\item[($b$)] $b \in C([0,1],\R)$ is nonpositive, and $\displaystyle{  \frac{d}{dr}
  (b(r)r) > -1-\frac{N-1}{r^2}  }$ in $(0,1)$.  
\end{itemize}
\begin{theorem}\label{theorem:non_constant_a-1}
Assume ($a$), ($b$), ($f1$), ($f2$), ($f3$) and suppose moreover that
$a(|x|)$ is nonconstant. Then there exists at least one nonconstant nondecreasing radial
solution of \eqref{eq:main_equation-variant}.
\end{theorem}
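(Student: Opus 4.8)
The plan is to recast \eqref{eq:main_equation-variant}, restricted to radial functions, as a fixed point problem for a compact operator that leaves invariant the cone $\mathcal{C}$ of nonnegative, nondecreasing radial functions in $C(\overline B)$, and then to invoke Krasnosel$'$ski{\u\i}'s cone fixed point theorem (in the form of \cite{Kwong}). Writing $u=u(r)$, $r=|x|$, and multiplying the radial form of \eqref{eq:main_equation-variant} by the integrating factor $\rho(r):=r^{N-1}\exp\!\big(\!-\!\int_0^r b(s)s\,ds\big)$, the problem becomes $-(\rho u')'+\rho u=\rho\,a(r)f(u)$ on $(0,1)$ with $u'(0)=u'(1)=0$; note $\rho\in C^1([0,1])$ and $c_1 r^{N-1}\le\rho(r)\le c_2 r^{N-1}$ for some $0<c_1\le c_2$. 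For $v\in\mathcal{C}$ one has $\|v\|_\infty=v(1)<\infty$, so $\rho a f(v)\in C([0,1])$ and we may define $Tv:=w$, the unique solution of the \emph{linear} Neumann problem $-(\rho w')'+\rho w=\rho a f(v)$. A fixed point $u=Tu$ in $\mathcal{C}$ with $u\not\equiv0$ then produces a solution of \eqref{eq:main_equation-variant} which, as we check at the end, is positive and --- because $a$ is nonconstant --- nonconstant.

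First I would record the basic properties of $T$. Continuity of $T$ on $(\mathcal{C},\|\cdot\|_\infty)$ follows from ($f1$)--($f2$) and linearity. Compactness follows because the comparison bound $\|w\|_\infty\le\|af(v)\|_\infty$ (legitimate since the zeroth order coefficient is positive), together with the representation $\rho(r)w'(r)=\int_0^r\rho(s)\big(w(s)-a(s)f(v(s))\big)\,ds$ and $\rho(r)^{-1}\!\int_0^r\rho\lesssim r$, bounds $Tv$ in $C^1([0,1])$ in terms of $\|v\|_\infty$ alone, whence Arzel\`a--Ascoli applies. The main point is $T(\mathcal{C})\subseteq\mathcal{C}$: nonnegativity of $w=Tv$ is the maximum principle, and monotonicity is precisely where ($b$) enters. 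Differentiating the equation, $p:=w'$ solves $-p''-\tfrac{N-1}{r}p'+b(r)r\,p'+c(r)\,p=(af(v))'$ with $p(0)=p(1)=0$, where $c(r)=1+\tfrac{N-1}{r^2}+\tfrac{d}{dr}\big(b(r)r\big)>0$ by ($b$); since $(af(v))'=a'f(v)+af'(v)v'\ge0$ by ($a$), ($f1$), ($f2$), the maximum principle for the operator with positive zeroth order term yields $p\ge0$, i.e.\ $w$ is nondecreasing (for $v$ merely continuous one first approximates $af(v)$ by smooth nondecreasing functions and passes to the limit).

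Next I would verify Krasnosel$'$ski{\u\i}'s conditions on two spheres $S_r=\{u\in\mathcal{C}:\|u\|_\infty=r\}$ and $S_R$, $0<r<R$. On $S_r$: if $Tu=\lambda u$ with $\lambda\ge1$, then $\lambda r=\|Tu\|_\infty\le\|af(u)\|_\infty=\|a\|_\infty f(r)$, impossible as soon as $r$ is so small that $\|a\|_\infty f(r)/r<1$ --- which holds by ($f1$). On $S_R$, fixing $e\equiv1\in\mathcal{C}$, one must exclude $u=Tu+\mu e$ with $\mu\ge0$; this is the \emph{a priori estimate}: every such $u$ satisfies $\|u\|_\infty\le M$ with $M$ independent of $\mu$. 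For it, observe that ($f2$)--($f3$) give $f(s)\ge(a_0^{-1}+\delta)(s-s_0)$ for all $s\ge0$, with some $\delta>0$, $s_0\ge0$; integrating $-(\rho u')'+\rho u=\rho af(u)+\mu\rho$ over $(0,1)$ and using $a\ge a_0$ yields $a_0\delta\int_0^1\rho u+\mu\!\int_0^1\rho\le(1+a_0\delta)s_0\!\int_0^1\rho$, which bounds \emph{both} $\int_0^1\rho u$ and $\mu$. To turn the bound on $\int_0^1\rho u$ into one on $\|u\|_\infty=u(1)$, use the smoothing estimate $w(1)\le C\int_0^1\rho af(v)\,ds=C\int_0^1\rho w\,ds$, valid for all $w=Tv$: it follows from $w(1)=\int_0^1 G(1,s)\rho(s)a(s)f(v(s))\,ds$ with $G$ the (one-dimensional, hence bounded) Green's function of $-\tfrac1\rho(\rho\cdot')'+\cdot$ under Neumann conditions, whose trace obeys $0<c_0\le G(1,s)\le C$ on $[0,1]$. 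Then $u(1)=(Tu)(1)+\mu\le C\int_0^1\rho u+\mu\le M$. Choosing $R>M$, Krasnosel$'$ski{\u\i}'s theorem gives a fixed point $u\in\mathcal{C}$ with $r\le\|u\|_\infty\le R$; so $u\not\equiv0$, $u$ is a classical radial solution of \eqref{eq:main_equation-variant} by elliptic regularity, the strong maximum principle applied to $(-\Delta+b(|x|)x\!\cdot\!\nabla+1)u=af(u)\ge0$ gives $u>0$ in $B$, and $u$ cannot be constant: $u\equiv c$ would force $a(|x|)f(c)\equiv c$, hence $f(c)=0$ and $c=0$ because $a$ is nonconstant, contradicting $u>0$.

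I expect the a priori estimate on $S_R$ to be the main obstacle. The delicate point is that $\mathcal{C}$ contains functions with arbitrarily small weighted mass $\int_0^1\rho u$ but arbitrarily large $\|u\|_\infty$ (those concentrated near $|x|=1$), so bounding $\int_0^1\rho u$ via ($f3$) is not by itself enough; one must complement it with the smoothing property of $T$ --- elements of its range have sup norm comparable to their weighted mass --- which is exactly where the boundedness (and positivity of the boundary trace) of the one-dimensional Green's function is used. The second structural ingredient, underlying everything, is the invariance of the cone of increasing functions, secured by the sign condition $c(r)>0$ in hypothesis ($b$).
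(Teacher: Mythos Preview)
Your proposal is correct and follows the same overall strategy as the paper: apply Krasnosel$'$ski{\u\i}'s cone fixed point theorem (in Kwong's form) to the solution operator $T$ acting on the cone $\cC$ of nonnegative, nondecreasing radial functions, verifying cone invariance by differentiating the equation and using the positivity of the zeroth order coefficient $c(r)=1+\frac{N-1}{r^2}+(b(r)r)'$ guaranteed by~($b$), the repulsion estimate on the small sphere via~($f1$), and the a~priori bound on the large sphere via~($f3$).

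The execution differs from the paper in a few places that are worth recording. First, you work in $(C(\overline B),\|\cdot\|_\infty)$, while the paper sets everything up in $H^1_{rad}(B)$; both choices work because functions in $\cC$ are automatically bounded. Second, you introduce the integrating factor $\rho(r)=r^{N-1}\exp(-\int_0^r b(s)s\,ds)$ to recast the radial equation in self-adjoint form; integrating then gives an \emph{exact} identity $\int_0^1\rho u=\int_0^1\rho af(u)+\mu\int_0^1\rho$, whereas the paper keeps the non-divergence form and uses the sign condition $b\le 0$ to obtain an inequality when integrating over $B$ (Lemma~\ref{lemma:K_1}). Third, and this is the main technical difference, to pass from the weighted $L^1$ bound to the $L^\infty$ bound you invoke boundedness of the one-dimensional Green's trace $G(1,\cdot)$, while the paper integrates the differential inequality $(r^{N-1}u')'\le r^{N-1}u$ to control $u'$ pointwise and then appeals to Lemma~\ref{lemma:embedding_of_cone_in_L_infty}. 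A pleasant by-product of your route is that the a~priori estimate does not actually use $b\le 0$, only the lower bound on $(br)'$ in~($b$); the paper's Lemmas~\ref{lemma:K_1}--\ref{lemma:K_infty} do rely on $b\le 0$.
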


In case $a$ is a constant function, say $a\equiv1$, assumptions
$(f1)$--$(f3)$ imply the existence of $u_0 >0$ such that $f(u_0)=u_0$, 
so that $u_0$ is a constant solution of
(\ref{eq:main_equation}). Moreover, there exist
nonlinearities satisfying $(f1)$--$(f3)$ (with $a_0=1$) and such that the problem
\begin{equation}\label{eq:main_equation_a=1}
\left\{\begin{array}{ll}
        -\Delta u+u=f(u) \quad &\text{ in } B \\
	u>0 &\text{ in } B\\
	\partial_\nu u=0 &\text{ on } \partial B
       \end{array}
\right.
\end{equation}
only admits this constant solution (see Proposition
\ref{proposition:example} below, where we adapt an argument of
\cite{BonheureGrumiau}). We need
the following additional assumption:
\begin{itemize}
\item[($f4$)] there exists $u_0>0$ such that $f(u_0)=u_0$ and $f'(u_0)>\lambda_2^{rad}$.
\end{itemize}
Here $\lambda_2^{rad}>1$ is the second radial eigenvalue of
$-\Delta+1$ in the unit ball with Neumann boundary conditions. We prove the following result.
\begin{theorem}\label{theorem:a=1}
Assume ($f1$)--($f4$) with $a \equiv 1$. Then there exists at least
one nonconstant increasing radial solution of \eqref{eq:main_equation_a=1}.
\end{theorem}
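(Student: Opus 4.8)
The plan is to argue variationally. For $a\equiv1$ problem~\eqref{eq:main_equation_a=1} has the energy functional
\[
J(u)=\frac12\int_B\bigl(|\nabla u|^2+u^2\bigr)\,dx-\int_BF(u)\,dx,\qquad F(t)=\int_0^tf(s)\,ds,
\]
which is well defined on $\spc$ once $f$ is extended by $0$ to $(-\infty,0)$ (this keeps $f\in C^1(\R)$, since $f'(0)=0$); its critical points are exactly the radial solutions, and a nontrivial one is positive by $f\ge0$ and the strong maximum principle. I would run the argument inside the cone $\cone\subset\spc$ of nonnegative nondecreasing radial functions, using two ingredients already in the paper: $\cone$ is closed, convex and positively invariant under the negative $H^1$-gradient flow of $J$ (the radial Neumann resolvent of $-\Delta+1$ maps $\cone$ into itself, which is also what drives the fixed point argument behind Theorems~\ref{theorem:non_constant_a}--\ref{theorem:non_constant_a-1}); and, thanks to the pointwise bound $\|u\|_{L^\infty}\le c\,\|u\|_{H^1}$ valid on $\cone$, every bounded Palais--Smale sequence of $J$ contained in $\cone$ is relatively compact (it is $H^2$-bounded through $(-\Delta+1)^{-1}$) — securing the necessary boundedness, or confining the argument to a bounded energy range via the a priori estimates of the paper, is part of the work. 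Since $\cone$ is flow invariant, a point of $\cone$ at which the gradient flow is stationary is a genuine critical point of $J$, so it suffices to produce a nonconstant critical point of $J|_{\cone}$. Note also $J(u)\ge\frac14\|u\|_{H^1}^2$ for $u\in\cone$ small (here $f'(0)=0$ and the $L^\infty$--$H^1$ bound are used), so $0$ is a strict local minimum, while by $(f3)$, $J$ is unbounded below along rays of $\cone$.

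The role of $(f4)$ is to make the constant solution $u_0\in\cone$ an unstable critical point \emph{whose instability is realized in a nonconstant direction lying in $\cone$}. Indeed,
\[
J''(u_0)[v,v]=\int_B\bigl(|\nabla v|^2+v^2\bigr)\,dx-f'(u_0)\int_Bv^2\,dx ,
\]
which, by $(f4)$, is negative definite on the plane $\Pi=\mathrm{span}\{\varphi_1,\varphi_2\}$ spanned by the first two radial Neumann eigenfunctions of $-\Delta+1$. Here $\varphi_1$ is a positive constant and $\varphi_2$ may be chosen nondecreasing on $[0,1]$, its radial derivative vanishing only at the endpoints; hence, for $\kappa$ large, $w:=\kappa\varphi_1+\varphi_2$ is a \emph{nonconstant} element of $\cone$ with $J''(u_0)[w,w]<0$, so $J(u_0+tw)<J(u_0)$ for small $t>0$, while (enlarging $\kappa$ if necessary) $(f3)$ gives $J(u_0+tw)\to-\infty$ as $t\to\infty$. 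In particular the half-disk $\{u_0+s\varphi_1+t\,w:\ s\in\R,\ t\ge0,\ s^2+t^2\le\rho^2\}$ lies in $\cone$ and $u_0$ is a strict maximum of $J$ on it.

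On this basis I would build a min--max of linking type for $J|_{\cone}$, centered at $u_0$ and designed to be topologically linked with the ray of constant functions, so as to force the resulting critical point off the constants. A natural scheme: deform the two-dimensional sector $Q:=(u_0+\Pi)\cap\cone$ (a $2$-cell on whose outer boundary $J\le J(u_0)-\eta$) through maps of $\cone$ that fix a deep sublevel set and keep the image linked with a fixed infinite-dimensional piece of $\cone$ that misses the constant ray, and set $c:=\inf_h\max_{h(Q)}J$. Flow invariance of $\cone$ together with the compactness above then make $c$ a critical value; the linking with the constant-free set forces the critical point to be nonconstant; and routing $Q$ around $u_0$ in the direction $w$ keeps $c<J(u_0)$, hence also distinct from $u_0$. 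The main obstacle is exactly this last point: because the problem may carry several constant solutions, a bare energy comparison does not suffice to keep the min--max value away from the constants, and one needs a genuinely topological device (a relative category, or a cohomological linking with the one-parameter family of constant solutions) that survives the deformation while staying below the relevant energy ceiling. The second, more technical, point that requires care is a deformation lemma \emph{inside} $\cone$ at the (possibly resonant) level $c$; this is precisely where the invariance of $\cone$ under the gradient flow, the $L^\infty$--$H^1$ estimate on $\cone$ and the a priori bounds of the paper are brought to bear.
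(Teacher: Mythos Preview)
Your outline has two genuine gaps, both of which the paper resolves by devices you do not mention.

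First, the claim that $J$ is well defined (let alone $C^2$) on $\spc$ is false under $(f1)$--$(f4)$ alone: $f$ may have arbitrary supercritical growth, so $\int_B F(u)\,dx$ need not be finite for general $u\in\spc$, and the gradient flow you invoke cannot be set up as a flow on an open set of $H^1(B)$. Working only on $\cone$ does give finiteness of $J$ (by the $L^\infty$ bound), but $\cone$ has empty interior in $H^1$, so you cannot get a locally Lipschitz vector field there without more. The paper fixes this by a truncation: one replaces $f$ by a $C^1$ function $\tilde f$ that agrees with $f$ on $[0,K_\infty]$ and satisfies $\tilde f(s)/s^p\to1$ for a subcritical $p$; then the truncated functional $I$ is $C^2$ on all of $\spc$, the standard deformation lemma applies, and by the a priori $L^\infty$ bound any critical point in $\cone$ solves the original problem. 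Your parenthetical ``securing the necessary boundedness \ldots\ is part of the work'' gestures at this but does not supply it.

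Second, and more importantly, you correctly identify the real difficulty---multiple constant solutions---but your proposed remedy (a cohomological linking with a ``constant-free'' piece of $\cone$) is left unspecified, and it is not clear such a linking can be made to survive deformation while staying in $\cone$. The paper avoids this entirely by a simple but decisive idea you are missing: one restricts further to the order interval
\[
\cC_*=\{u\in\cone:\ u_-\le u\le u_+\},
\]
where $u_-<u_0<u_+$ are the adjacent fixed points of $\tilde f$. Then $u_0$ is the \emph{only} constant solution in $\cC_*$, the set $\cC_*$ is still closed, convex and flow-invariant (since $u_\pm$ are sub/supersolutions), and a mountain pass is run in $\cC_*$ between neighborhoods of $u_-$ and $u_+$. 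The path $t\mapsto t(u_0+s\varphi_2)$, $t\in[t_-,t_+]$, for small $s>0$ lies in $\cC_*$ and---exactly via your computation $I''(u_0)[\varphi_2,\varphi_2]<0$---has maximum strictly below $I(u_0)$; hence $c<I(u_0)$, and since $u_0$ is the unique constant in $\cC_*$, the critical point is nonconstant. No linking machinery is needed: confinement to the order interval reduces the problem to a bare energy comparison with a single constant.
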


To our knowledge, this is the first existence result for nonconstant
solutions of (\ref{eq:main_equation_a=1}) under assumptions
$(f1)$--$(f4)$ and even under the more restrictive
conditions~(\ref{eq:7}) and $(f4)$. 
An inspection of the proof of Theorem \ref{theorem:a=1} shows that we
find nonconstant solutions of \eqref{eq:main_equation_a=1} in every order interval of the form
$[u_-,u_+]$, where $u_-$ and $u_+$ are ordered fixed points of $f$ with the property that 
there exists another fixed point $u_0 \in (u_-,u_+)$ such that $f'(u_0)>\lambda_2^{rad}$.

We note that the topological fixed point method does not give
sufficient information to detect a nonconstant solution of
(\ref{eq:main_equation_a=1}), moreover it seems impossible to use the
spectral assumption ($f4$) within a shooting approach to derive
Theorem~\ref{theorem:a=1}. Therefore we use a
variational approach, but this leads to several difficulties. First, the (formal) energy functional associated
with (\ref{eq:main_equation_a=1}) is not well defined
and of class $C^1$ in $H^1(B)$ under the sole assumptions
$(f1)$--$(f4)$. We overcome this problem by truncating the
nonlinearity $f$ and by recovering the original problem by means of a
priori estimates on the solutions. Then we construct a suitable convex
subset $\cC_*$ of the cone of
nonnegative, nondecreasing radial functions in $H^1(B)$ such that
$u_0$ is the only constant solution of (\ref{eq:main_equation_a=1}) in
$\cC_*$, and we show
that this set is positively invariant under the corresponding gradient
flow. Then we set up a variational principle of mountain pass type
within $\cC_*$, and -- using assumption $(f4)$ -- we show that the corresponding
critical point is different from $u_0$. 
Within this last step, a further problem
occurs; the set $\cC_*$ has empty interior in the
$H^1$-topology, and even though one could prove that $\cC_* \cap X$ 
has interior points in the topology of the smaller space
$X=\cC^2(\overline B) \subset H^1(\Omega)$, the constant solution
$u_0$ is still a boundary point of $\cC_* \cap X$. Therefore it does
not seem possible to use standard Morse theory (i.e. the calculation
of critical groups) to distinguish critical points obtained via
deformations in $\cC_*$ from the constant solution $u_0$. In particular,
this prevents us from using the techniques in
\cite{Li-Wang}, where the authors prove an abstract mountain pass
theorem in order intervals.

The paper is organized as follows. In Section \ref{section:preliminaries} we introduce the cone of radial, nonnegative, nondecreasing
functions and its properties. In Section \ref{section:topological_method} we obtain a priori estimate on the solutions of \eqref{eq:main_equation}
in the cone, which allows to prove Theorem \ref{theorem:non_constant_a} by applying a suitable fixed point theorem in the cone. In Section
\ref{section:variational_method} we fix $a(|x|)=1$ and provide the
proof of Theorem~\ref{theorem:a=1}. 

We close the introduction with an open problem. Our
construction of the nonconstant solution $u$
of~(\ref{eq:main_equation_a=1}) provided in Theorem~\ref{theorem:a=1}
implies that $u$ intersects the constant solution $u_0$. This raises the question whether it is
possible to construct radial solutions with a given number of
intersections with $u_0$ provided that $f'(u_0)$ is sufficiently
large. More precisely, we conjecture that there 
exists a radial solution with $k$ 
intersections with $u_0$ provided that $f'(u_0) > \lambda_k^{rad}$.

\section{The cone of nonnegative, nondecreasing, radial functions}\label{section:preliminaries}
We will look for solutions to \eqref{eq:main_equation} and (\ref{eq:main_equation-variant}) in the space of radial $H^1$ functions in the ball, that we denote by $\spc$.
% \[
% \spc=\{ u\in H^1(B): \ u=u(|x|)  \}.
% \]
If $u\in \spc$ then we can assume it is continuous in $(0,1]$ and the following set is well defined
\[
\cone=\{u\in \spc:\ u\geq0 \text{ and } u(r)\leq u(s) \text{ for every } 0 < r\leq s \leq 1\}.
\]
Observe that if $u\in \cone$, then $u\in C(\overline{B})$, and in particular it is a bounded function. In fact, since $u$ is nondecreasing, we can assume continuity also at the origin by setting $u(0)=\lim_{r\to0^+}u(r)$.
%\begin{remark}\label{rem:non_decreasing_implies_differentiable_ae}
%As any $u\in \cone$ is nondecreasing, 
Moreover, $u$ is differentiable almost everywhere and $u'(r)\geq0$ where it is defined.
%\end{remark}
%

It is easy to see that $\cone$ is a closed convex cone in $H^1(B)$, that is
\begin{itemize}
\item[(i)] if $u\in \cone$ and $\l>0$ then $\l u\in \cone$;
\item[(ii)] if $u,v\in \cone$ then $u+v\in \cone$;
\item[(iii)] if $u, -u\in\cone$ then $u\equiv0$;
\item[(iv)] $\cone$ is closed for the topology of $H^1$.
\end{itemize}
We will refer to $\cone$ as the cone of nonnegative, nondecreasing functions. Notice also that it is weakly closed in $H^1$ and as already mentioned, it has empty interior in the $H^1$-topology.

As observed by Serra and Tilli in \cite{SerraTilli}, $\cone$ is a good set when dealing with supercritical equations because of the a priori bound stated in the following lemma. 
\begin{lemma}\label{lemma:embedding_of_cone_in_L_infty}
There exists a constant $C$ only depending on the dimension $N$ such that
\[
\|u\|_{L^\infty(B)}\leq C \|u\|_{W^{1,1}(B)} \qquad \text{for all } u\in\cone.
\]
\end{lemma}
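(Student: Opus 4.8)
The plan is to exploit the key structural feature of functions in $\cone$: they are radial and nondecreasing in $r$, so their $L^\infty$ norm is attained at the boundary $r=1$, and this boundary value can be controlled by the $W^{1,1}$ norm by "spreading the mass" of $u$ toward the center. Concretely, for $u \in \cone$ write $u$ as a function of $r \in [0,1]$; then $\|u\|_{L^\infty(B)} = u(1)$ since $u$ is nondecreasing. The first step is to bound $u(1)$ pointwise in terms of $u$ itself plus a derivative term on an arbitrary inner shell, and then integrate out the radial variable against the weight $r^{N-1}$ to produce genuine $L^1(B)$ and $\nabla u \in L^1(B)$ quantities.

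In detail, for any $0 < r < 1$ I would use $u(1) = u(r) + \int_r^1 u'(t)\,dt \le u(r) + \int_0^1 u'(t)\,dt$, where the derivative integral makes sense since $u'\ge 0$ a.e. and $u \in H^1_{rad}$. Now multiply this inequality by $r^{N-1}$ and integrate $r$ over, say, $(1/2,1)$ (any fixed subinterval bounded away from $0$ works): this gives
\begin{equation*}
c_N\, u(1) \le \int_{1/2}^1 u(r)\,r^{N-1}\,dr + \Bigl(\int_{1/2}^1 r^{N-1}\,dr\Bigr)\int_0^1 u'(t)\,dt,
\end{equation*}
with $c_N = \int_{1/2}^1 r^{N-1}\,dr > 0$. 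Here I used that $\int_0^1 u'(t)\,dt$ does not depend on $r$. The first term on the right is bounded by $\int_0^1 u(r)\,r^{N-1}\,dr$, which up to the surface-measure constant $|\mathbb{S}^{N-1}|$ equals $\|u\|_{L^1(B)}$ (using $u \ge 0$). For the derivative term, since on $(1/2,1)$ we have $r^{N-1} \le 2^{N-1}\cdot \frac{1}{2^{N-1}}$... more cleanly: $\int_0^1 u'(t)\,dt = \int_0^1 u'(t)\cdot 1 \,dt \le 2^{N-1}\int_0^1 |u'(t)|\,t^{N-1}\,dt$ is false in general near $0$; instead I would simply bound $\int_0^1 u'(t)\,dt$ directly by first restricting the weight: split $\int_0^1 u'(t)\,dt = \int_0^{1/2}u'(t)\,dt + \int_{1/2}^1 u'(t)\,dt$, bound the second piece by $2^{N-1}\int_{1/2}^1 u'(t)t^{N-1}\,dt \le 2^{N-1}\int_0^1 |u'(t)|t^{N-1}\,dt$, and for the first piece note $u'\ge0$ gives $\int_0^{1/2}u'(t)\,dt = u(1/2)-u(0) \le u(1/2)$, and then bound $u(1/2)$ by a similar mass-spreading argument on $(0,1/2)$: $u(1/2)\,\int_0^{1/2}t^{N-1}\,dt \ge \int_0^{1/2}u(t)t^{N-1}\,dt$ is the wrong direction, so instead use $u(1/2) = u(s) + \int_s^{1/2}u' \le u(s) + \int_0^{1}u'(t)\,dt$ for $s\in(0,1/2)$ and integrate $s$ against $t^{N-1}$ over $(0,1/2)$ — which is circular. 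The clean fix is to do everything on one shell: directly estimate $u(1) \le u(r) + \int_0^1 u'(t)\,dt$ and then also bound $\int_0^1 u'(t)\,dt$ itself. So I prefer: choose the shell $(1/2,1)$, get $c_N u(1) \le \int_0^1 u(r)r^{N-1}dr + c_N \int_0^1 u'(t)\,dt$, and handle $\int_0^1 u'(t)\,dt$ as a nuisance term by observing $\int_0^1 u'(t)\,dt \le 2^{N-1}\int_{1/2}^1 u'(t)t^{N-1}\,dt + \int_0^{1/2}u'(t)\,dt$ and $\int_0^{1/2}u'(t)\,dt = u(1/2)-u(0)\le u(1/2) = \|u\|_{L^\infty(B_{1/2})}$, then iterate the same inequality on $B_{1/2}$ (with shells $(1/4,1/2)$, etc.) or, more efficiently, note that a single application on the shell $(1/2,1)$ already suffices if one is willing to also add $u(1/2)$ and bound it by the average of $u$ over $(1/4,1/2)$ plus a derivative term — producing only finitely many (in fact two) terms, all controlled by $\|u\|_{L^1(B)} + \|\nabla u\|_{L^1(B)} = \|u\|_{W^{1,1}(B)}$.

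Summarizing the structure I would actually write: (i) reduce to $\|u\|_{L^\infty} = u(1)$ by monotonicity; (ii) for $r$ in a fixed shell $I \subset (0,1)$ bounded away from $0$, write $u(1) = u(r) + \int_r^1 u'$ and integrate against $r^{N-1}$ over $I$, giving $u(1) \lesssim_N \int_I u\,r^{N-1}dr + \int_0^1 u'(t)\,dt$; (iii) bound $\int_I u\,r^{N-1}\,dr \le \int_0^1 u\,r^{N-1}\,dr = |\mathbb{S}^{N-1}|^{-1}\|u\|_{L^1(B)}$ using $u\ge 0$; (iv) bound $\int_0^1 u'(t)\,dt = \int_0^1 u'(t)\cdot t^{-(N-1)}\cdot t^{N-1}\,dt$ — no: the honest clean bound is $\int_0^1 u'(t)\,dt \le \int_I u'\,dt + \int_{(0,1)\setminus I}u'\,dt$, with the part on $I$ absorbed as above and the complementary part, on $(0,\inf I)$, bounded by $u(\inf I)$, to which one reapplies (ii)–(iii) on the smaller ball; after finitely many steps (or a clean geometric-series argument) this yields $u(1) \le C_N(\|u\|_{L^1(B)} + \|\nabla u\|_{L^1(B)})$. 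I expect the main (minor) obstacle to be bookkeeping the radial weight $r^{N-1}$ near the origin so that the pure-derivative term $\int_0^1 u'\,dt$, which has \emph{no} weight, is correctly dominated by the weighted quantity $\int_0^1 |u'|r^{N-1}\,dr$; this is exactly why one integrates only over a shell away from $r=0$, where $r^{N-1}$ is bounded below, and treats the inner region by a separate (identical) estimate. Everything else is elementary, uses only $u\ge 0$, $u'\ge 0$ a.e., and the fundamental theorem of calculus for $H^1_{rad}$ functions, and the constant $C$ depends only on $N$ through $|\mathbb{S}^{N-1}|$ and $\int_I r^{N-1}\,dr$.
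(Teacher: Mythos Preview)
Your approach is morally the same as the paper's --- work on the outer shell $(1/2,1)$ where the radial weight $r^{N-1}$ is bounded below --- but your execution takes an unnecessary detour that introduces a genuine gap. The problem is the step where you bound $\int_r^1 u'(t)\,dt \le \int_0^1 u'(t)\,dt$: since you are only using $r \in (1/2,1)$, you should instead bound $\int_r^1 u'(t)\,dt \le \int_{1/2}^1 u'(t)\,dt$, and then $t^{N-1} \ge 2^{-(N-1)}$ on this shell immediately gives $\int_{1/2}^1 u'(t)\,dt \le 2^{N-1}\int_{1/2}^1 |u'(t)|\,t^{N-1}\,dt \le C_N \|\nabla u\|_{L^1(B)}$. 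No iteration is needed; the proof is finished in one line. This is exactly the paper's argument, which phrases it as the one-dimensional Sobolev embedding $W^{1,1}((1/2,1)) \hookrightarrow L^\infty((1/2,1))$ applied to the radial profile, together with $\|u\|_{L^\infty(B)}=\|u\|_{L^\infty(B\setminus B_{1/2})}$ by monotonicity.

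As written, your iteration does \emph{not} close: when you repeat the shell estimate on $(2^{-(k+1)},2^{-k})$, both $\bigl(\int_{\text{shell}} r^{N-1}\,dr\bigr)^{-1}$ and the factor needed to insert the weight $t^{N-1}$ into $\int_{\text{shell}} u'$ grow like $2^{kN}$ and $2^{k(N-1)}$ respectively, so the sum $\sum_k A_k$ is not controlled by $\|u\|_{W^{1,1}(B)}$ and ``finitely many steps'' has no natural termination. If you want to salvage the iteration route rather than take the one-line fix above, observe that monotonicity gives the direct bound $u(1/2) \le \frac{1}{|B\setminus B_{1/2}|}\int_{B\setminus B_{1/2}} u \le C_N \|u\|_{L^1(B)}$, which terminates your recursion after a single step --- but this is just another way of restating the shell argument.
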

\begin{proof}
For every $u\in\cone$ we have $\|u\|_{L^\infty(B)}=\|u\|_{L^\infty(B\setminus B_{1/2})}$. Since $u$ is radial and the space
$W^{1,1}((1/2,1))$ is continuously embedded in $L^\infty((1/2,1))$, we deduce that there exists $C>0$, only depending on the dimension $N$,
such that
\[
\|u\|_{L^\infty(B)}=\|u\|_{L^\infty(B\setminus B_{1/2})}\leq C \|u\|_{W^{1,1}(B\setminus B_{1/2}))}\leq C \|u\|_{W^{1,1}(B)}. \qedhere
\]
\end{proof}

\begin{remark}
\label{sec:cone-nonn-nond-1}
Lemma~\ref{lemma:embedding_of_cone_in_L_infty} implies that the
embedding $\cone \subset L^\infty(\Omega)$ is bounded when
$\cone$ is considered with the metric induced by the $H^1(B)$-norm. 
However, this embedding is not continuous if $N \ge 3$, since the sequence $(u_n)_n
\subset \cC$ defined by $u_n(x)=|x|^{1/n}$ satisfies 
$\|u_n-1\|_{H^1(B)} \to 0$ as $n\to+\infty$ and 
$\|u_n-1\|_{L^\infty} \ge 1$ for all $n$. Nevertheless we have the
following continuity property.
\end{remark}

\begin{lemma}
\label{sec:cone-nonn-nond}
Let $g: [0,\infty) \to \R$ be continuous, and let $(u_n)_n \subset
\cone$ be a sequence with $u_n \rightharpoonup u$ weakly in $H^1(B)$. Then for every $p
\in [1,\infty)$ we have
$$
g \circ u_n \to g \circ u \qquad \text{in $L^p(B)$ as $n \to \infty$.}
$$
\end{lemma}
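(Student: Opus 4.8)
The plan is to exploit the uniform $L^\infty$-bound on the cone (Lemma~\ref{lemma:embedding_of_cone_in_L_infty}) together with the compact embedding $H^1(B)\hookrightarrow L^p(B)$ for $p<\infty$, turning a weak $H^1$-limit into a strong $L^1$-limit and then bootstrapping. First I would note that the sequence $(u_n)_n$ is bounded in $H^1(B)$ (weakly convergent sequences are bounded), hence by Lemma~\ref{lemma:embedding_of_cone_in_L_infty} there is a constant $M$ with $\|u_n\|_{L^\infty(B)}\le M$ for all $n$; moreover $u\in\cone$ since $\cone$ is weakly closed, so $\|u\|_{L^\infty(B)}\le M$ as well. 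Set $K:=[0,M]$, a compact interval on which $g$ is uniformly continuous and bounded.

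Next I would pass to a subsequence. By the Rellich--Kondrachov theorem, $u_n\to u$ strongly in $L^1(B)$, hence along a subsequence $u_n\to u$ pointwise a.e. in $B$. By continuity of $g$, $g\circ u_n\to g\circ u$ pointwise a.e. Since all values $u_n(x)$ lie in $K$, we have $|g\circ u_n|\le \|g\|_{L^\infty(K)}=:C'$, a fixed constant, so $(g\circ u_n)_n$ is dominated by the constant function $C'\in L^p(B)$ for every finite $p$. The dominated convergence theorem then yields $g\circ u_n\to g\circ u$ in $L^p(B)$ along this subsequence. Finally, a standard subsequence argument promotes this to convergence of the full sequence: every subsequence of $(u_n)_n$ still converges weakly to $u$ in $H^1$ and is bounded in $L^\infty$, so the above reasoning applies to extract a sub-subsequence converging to $g\circ u$ in $L^p(B)$; since the limit is always the same, the whole sequence converges.

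The only genuinely delicate point is the one flagged in Remark~\ref{sec:cone-nonn-nond-1}: one must not try to use $L^\infty$-continuity of $u\mapsto u$ on the cone, because it fails for $N\ge3$. The role of the cone is solely to provide the \emph{uniform} (not continuous) $L^\infty$-bound, which furnishes the domination needed for the dominated convergence step; the actual mode of convergence that does the work is the $L^p$-convergence coming from compactness of the Sobolev embedding, combined with a.e. convergence along a subsequence. Everything else is routine.
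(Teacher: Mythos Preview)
Your proof is correct and follows essentially the same route as the paper: uniform $L^\infty$-bound from Lemma~\ref{lemma:embedding_of_cone_in_L_infty}, a.e.\ convergence along a subsequence via the compact Sobolev embedding, and dominated convergence. The only cosmetic difference is that the paper wraps the sub\-sequence argument as a proof by contradiction, whereas you state it directly as ``every subsequence has a further subsequence converging to the same limit''; your version is in fact slightly more explicit (e.g.\ you spell out that $u\in\cone$ by weak closedness before invoking the $L^\infty$-bound on the limit).
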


\begin{proof}
Let $p \in [1,\infty)$. Suppose by contradiction that -- passing to a subsequence -- we have 
\begin{equation}
  \label{eq:9}
\liminf_{n \to \infty} \int_{B} |g(u_n) -g(u)|^p \,dx >0.
\end{equation}
Since $u_n \to u$ in $L^2(B)$, we may pass to a subsequence such that
$u_n \to u$ a.e. in $B$. Moreover, by Lemma~\ref{lemma:embedding_of_cone_in_L_infty} we have $u \in L^\infty(B)$
and $\sup \limits_{n \in \N} \|u_n\|_{L^\infty(B)}<\infty$, hence also 
$$
\sup_{n \in \N} \|g(u_n)-g(u)\|_{L^\infty(B)} < \infty.
$$
We now infer from Lebesgue's theorem that 
$$\lim \limits_{n \to \infty} \int_{B} |g(u_n) -g(u)|^p \,dx =0$$ 
but this contradicts (\ref{eq:9}). The claim follows. 
\end{proof}

\section{Existence of solutions via a topological method}\label{section:topological_method}
In this section we will prove Theorem \ref{theorem:non_constant_a-1},
and we note that Theorem~\ref{theorem:non_constant_a} immediately
follows from Theorem \ref{theorem:non_constant_a-1}. Throughout this
section we assume conditions ($a$), ($b$) and ($f1$)--($f3$). We first
recall well known properties of the linear differential operator $\cL:=
-\Delta +b(|x|)\,  x \!\cdot\! \nabla  +Id$. 

\begin{lemma}
\label{sec:exist-solut-via} 
Let 
$$
\cH(B):= \{v \in H^2(B)\,:\, \partial_r v \in H^1_0(B)\},
$$
where $\partial_r$ denotes the derivative in direction $x/|x|$.
For every $w \in L^2(B)$, the equation $\cL v=w$ admits a unique
solution $v \in \cH(B)$, and $\|v\|_{H^2(B)} \le C\|w\|_{L^2(B)}$ with a constant
$C>0$ independent of $w$. 
Moreover, if $w \in L^p(B)$ for some $p \in (2,\infty)$, then $v \in
W^{2,p}(B)$. Also, if $w \in H^1(B)$, then $v \in H^3(B)$.
\end{lemma}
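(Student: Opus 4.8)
The plan is to treat $\cL = -\Delta + b(|x|)\,x\cdot\nabla + Id$ as a (formally non-self-adjoint, but variationally tractable) elliptic operator acting on radial functions with Neumann boundary conditions, and to prove existence, uniqueness and regularity of the solution of $\cL v = w$ by an energy method followed by standard elliptic regularity. First I would pass to the radial variable: writing $v = v(r)$, the equation becomes
\[
-v'' - \frac{N-1}{r}v' + b(r)\,r\,v' + v = w(r) \qquad \text{in } (0,1), \qquad v'(1) = 0,
\]
together with the natural condition $v'(0)=0$ that forces regularity at the origin; the space $\cH(B)$ is exactly the set of $H^2$ radial functions whose radial derivative lies in $H^1_0(B)$, i.e. vanishes at $r=1$ (and, automatically for $H^2$ radial functions, at $r=0$). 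The key observation is that, after multiplying the radial equation by the integrating factor
\[
\rho(r) := r^{N-1}\exp\!\Big(-\!\int_0^r b(s)\,s\,ds\Big),
\]
the principal part becomes $-(\rho v')'/\rho$, so that $\cL$ is self-adjoint with respect to the weighted measure $\rho(r)\,dr$; the associated bilinear form on $H^1_{rad}(B)$ weighted by $\rho$ is
\[
\mathcal{B}(v,\varphi) = \int_0^1 \big(v'\varphi' + v\varphi\big)\,\rho\,dr,
\]
which is manifestly continuous and coercive on the weighted $H^1$ space, since assumption $(b)$ guarantees $b\le 0$, hence $\rho(r)\ge r^{N-1}$, and $\rho$ is bounded above on $[0,1]$, so the weighted norm is equivalent to the usual radial $H^1(B)$-norm. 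By Lax–Milgram there is a unique weak solution $v$, with $\|v\|_{H^1_{rad}} \le C\|w\|_{L^2}$.

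Next I would upgrade regularity. Rewriting the equation in non-divergence form, $-v'' = \big(\tfrac{N-1}{r} - b(r)r\big)v' - v + w =: \tilde w$, and noting that the coefficient $\tfrac{N-1}{r} - b(r)r$ is in $L^2$ near the origin in the $N$-dimensional sense (because $-\Delta v$ with the full Laplacian absorbs the singular $\tfrac{N-1}{r}$ term — it is cleaner to work with the PDE $-\Delta v + b(|x|)\,x\cdot\nabla v + v = w$ directly in $B$ rather than with the ODE), I get $-\Delta v = w - v - b(|x|)\,x\cdot\nabla v \in L^2(B)$, since $b$ is bounded and $\nabla v \in L^2$. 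Interior and boundary elliptic estimates for the Neumann problem (the condition $\partial_r v \in H^1_0(B)$ is precisely the homogeneous Neumann condition $\partial_\nu v = 0$ on $\partial B$, plus the compatibility at the center built into the radial $H^2$ class) then give $v \in H^2(B)$ with $\|v\|_{H^2} \le C\|w\|_{L^2}$; here one must also check $\partial_r v \in H^1_0(B)$, which follows because $\partial_r v(1)=0$ from the Neumann condition and $\partial_r v$ is an $H^1$ radial function vanishing at $r=0$. The bootstrap for the higher statements is then routine: if $w\in L^p(B)$, $p>2$, then after the first step $v\in H^2(B)\hookrightarrow W^{1,q}$ for all $q<\infty$ (in dimension where this embedding holds) so $b(|x|)x\cdot\nabla v\in L^p$, hence $-\Delta v\in L^p$ and Calderón–Zygmund plus Neumann boundary estimates give $v\in W^{2,p}(B)$; if $w\in H^1(B)$, one differentiates the equation, uses $v\in H^2$ and $b\in C([0,1])$ — here one should note $b$ need only be continuous, and the term $b(|x|)x\cdot\nabla v$ has its derivative controlled in $L^2$ once $v\in H^2$ and a difference-quotient argument handles the mere continuity of $b$ — to conclude $v\in H^3(B)$.

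I expect the main obstacle to be the behavior at the origin: verifying that the weak solution obtained from the weighted Lax–Milgram argument genuinely lies in $H^2(B)$ (and that the weighted boundary condition $\partial_r v\in H^1_0(B)$ encodes the correct pointwise behavior) requires care because the radial weight $\rho(r)\sim r^{N-1}$ degenerates at $r=0$. The clean way around this is to avoid the ODE formulation for the regularity part entirely and argue with the genuine PDE on $B$, using that radial $L^2$ functions are $L^2$ functions, that the first-order term $b(|x|)\,x\cdot\nabla v$ is a bounded perturbation treatable as a lower-order term on the right-hand side, and that standard elliptic theory for $-\Delta + Id$ with Neumann conditions applies; the radial symmetry is then automatically preserved by uniqueness, and $\partial_r v\in H^1_0(B)$ is just a restatement of the Neumann condition in the radial class. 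A secondary point to be careful about is the regularity near $r=0$ in the $W^{2,p}$ and $H^3$ statements when $N\ge 3$, where Sobolev embeddings of $H^2(B)$ do not reach $L^\infty$; but since one only needs $\nabla v$ in $L^p$ (not $L^\infty$) to close the loop, the standard embeddings suffice.
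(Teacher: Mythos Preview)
Your approach is essentially correct but genuinely different from the paper's. Your key idea is the integrating factor $\rho(r)=r^{N-1}\exp\bigl(-\int_0^r b(s)s\,ds\bigr)$, which recasts $\cL$ as a self-adjoint operator for a weighted Dirichlet form that is manifestly coercive (the weight being bounded above and below by positive multiples of $r^{N-1}$); Lax--Milgram then delivers existence and uniqueness at once, and you bootstrap regularity from there. The paper instead observes that the drift term $b(|x|)\,x\cdot\nabla$ is a compact perturbation of $-\Delta+Id$, so $\cL$ is Fredholm of index zero between each of the pairs $\cH(B)\to L^2(B)$, $\cH(B)\cap W^{2,p}(B)\to L^p(B)$, $\cH(B)\cap H^3(B)\to H^1(B)$, and then proves injectivity via the strong maximum principle and the Hopf lemma. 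The Fredholm route treats arbitrary $w\in L^2(B)$ directly, whereas your bilinear form, written on $(0,1)$ with weight $\rho$, handles only radial data; since the lemma is stated (and later invoked through the estimate $\|\cL u\|_{L^2}\ge C\|u\|_{L^2}$) for all $u\in\cH(B)$, you should note that the same idea works in $N$ variables: with $\psi(x)=\exp\bigl(-\int_0^{|x|}b(s)s\,ds\bigr)$ one has $\psi\,\cL v=-\mathrm{div}(\psi\nabla v)+\psi v$, and Lax--Milgram applies on the full $H^1(B)$. Your argument is more constructive and avoids the maximum principle entirely; the paper's Fredholm argument, in return, yields the $W^{2,p}$ and $H^3$ assertions in one stroke without the bootstrap you sketch.
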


\begin{proof}
The assertions are true by
standard elliptic regularity if $b \equiv 0$. Moreover, since the first
order term in $\cL$ defines a compact perturbation, $\cL$ is
a Fredholm operator of index zero when considered as a map
between the spaces $\cH(B) \to L^2(B)$, $\cH(B) \cap W^{2,p}(B) \to L^p(B)$
and $\cH(B) \cap  H^3(B) \to H^1(B)$, respectively. Therefore it remains to prove
the following: 
\begin{equation}
  \label{eq:injectivity}
\text{the equation $\cL v= 0$ only admits the trivial solution in $\cH(B)$.} 
\end{equation}
To prove this, let $v \in \cH(B)$ solve $\cL v= 0$, i.e. 
$-\Delta v +v= b(|x|)\,  x \!\cdot\! \nabla v$. Since the map $x \mapsto b(|x|)$ is Lipschitz in $B$ as a
consequence of assumption $(b)$, it follows from standard elliptic
regularity that $v \in C^{2,\alpha}(\overline B)$ for some $\alpha>0$. Moreover, by the
strong maximum principle, $v$ neither may attain a positive maximum
nor a negative minimum in $B$. Since
moreover $\partial_r v = 0$ on
$\partial B$, the Hopf Lemma implies that $v$ cannot attain a positive maximum
nor a negative minimum on $\partial B$. Therefore $v \equiv 0$, as
claimed in (\ref{eq:injectivity}).
\end{proof}
   
We will prove Theorem \ref{theorem:non_constant_a-1} by applying a suitable fixed point theorem to the operator $T:\cone \to H^1(B)$
defined as
\begin{equation}\label{eq:definition_of_T}
T(u)=v \qquad \text{with} \qquad
\left\{\begin{array}{ll}
        -\Delta v+b(|x|)\,  x \!\cdot\! \nabla v + v=a(|x|)f(u) \quad &\text{ in } B \\
	\partial_\nu v=0 &\text{ on } \partial B.
       \end{array}
\right.
\end{equation}
Notice that the function $x \mapsto a(|x|)f(u(x))$ is contained in
$\cone$ whenever $u \in \cone$, since $u\in L^\infty(B)$ by Lemma
\ref{lemma:embedding_of_cone_in_L_infty}. The first step is of course to prove
that $T(\cone)\subseteq \cone$.
\begin{lemma}\label{lemma:T_is_cone_map}
Let $w\in \cone$; then the equation 
\[
\left\{\begin{array}{ll}
        -\Delta v+b(|x|)\,  x \!\cdot\! \nabla v +v=w \quad &\text{ in } B \\
	\partial_\nu v=0 &\text{ on } \partial B,
       \end{array}
\right.
\]
admits a unique solution $v=T(w)$, which belongs to $\cone$.
\end{lemma}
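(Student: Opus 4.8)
The plan is to obtain $v$ from Lemma~\ref{sec:exist-solut-via} and then to show that it inherits from $w$ the two properties that define $\cone$, namely nonnegativity and radial monotonicity.

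Since $w\in\cone$ we have $w\ge 0$, $w$ radial, $w\in L^\infty(B)$ by Lemma~\ref{lemma:embedding_of_cone_in_L_infty}, and $w\in\spc\subset H^1(B)$. Hence Lemma~\ref{sec:exist-solut-via} yields a unique $v=T(w)\in\cH(B)$ with $\cL v=w$ and $\partial_\nu v=0$ on $\partial B$, and moreover $v\in H^3(B)\cap W^{2,p}(B)$ for every $p\in(2,\infty)$; in particular $v\in C^1(\overline B)$. By the rotational invariance of $\cL$ and of $w$, uniqueness forces $v$ to be radial, $v=v(r)$; being $C^1$ and radial gives $v'(0)=0$, and $\partial_\nu v=0$ on $\partial B$ gives $v'(1)=0$. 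Nonnegativity of $v$ then follows from the maximum principle: the zeroth order coefficient of $\cL$ is $1>0$, so a negative interior minimum at $x_0$ would force $w(x_0)=\cL v(x_0)\le v(x_0)<0$, while a negative minimum on $\partial B$ is excluded by the Hopf lemma since $\partial_\nu v=0$ there. Hence $v\ge 0$ in $B$.

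It remains to show that $v$ is nondecreasing, i.e. that $h:=v'$ satisfies $h\ge 0$ on $[0,1]$; this is the heart of the lemma and the step where assumption~$(b)$ is used. Differentiating the radial form of $\cL v=w$, one checks that $h$ solves, weakly on $(0,1)$ against the weight $r^{N-1}$,
\[
-\bigl(r^{N-1}h'\bigr)'+r^{N}b(r)\,h'+r^{N-1}c(r)\,h=r^{N-1}w',\qquad c(r):=1+\frac{N-1}{r^{2}}+\bigl(b(r)r\bigr)',
\]
together with $h(0)=h(1)=0$ and $w'\ge 0$ a.e. (because $w$ is nondecreasing); note $c>0$ by assumption~$(b)$. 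Testing this identity against $h^{-}:=\max\{-h,0\}$, integrating by parts (the boundary contributions vanish since $h^{-}(1)=0$ and $r^{N-1}h'(r)\to 0$ as $r\to 0^{+}$, the latter because $v'(r)=O(r)$ and $v''$ is bounded near the origin), and using $h=-h^{-}$, $h'=-(h^{-})'$ on $\{h<0\}$ together with $w'h^{-}\ge 0$, one arrives at
\[
\int_{0}^{1}r^{N-1}\bigl|(h^{-})'\bigr|^{2}\,dr+\int_{0}^{1}r^{N-1}\Bigl(1+\frac{N-1}{r^{2}}+\tfrac12\bigl(b(r)r\bigr)'-\tfrac{N-1}{2}\,b(r)\Bigr)(h^{-})^{2}\,dr\le 0 .
\]
Assumption~$(b)$ gives $(b(r)r)'>-1-\tfrac{N-1}{r^{2}}$, so $1+\tfrac{N-1}{r^2}+\tfrac12(b(r)r)'>\tfrac12+\tfrac{N-1}{2r^2}$, and $b\le 0$ gives $-\tfrac{N-1}{2}b(r)\ge 0$; hence the coefficient in the second integral is bounded below by $\tfrac12+\tfrac{N-1}{2r^{2}}>0$. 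Both integrals are therefore nonnegative and sum to a nonpositive quantity, so both vanish; in particular $h^{-}\equiv 0$, i.e. $v'=h\ge 0$ and $v$ is nondecreasing. Combined with $v\ge 0$, this yields $v\in\cone$.

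The main obstacle is this last step: recognizing that assumption~$(b)$, jointly with $b\le 0$, is precisely what makes the zeroth order coefficient in the $h^{-}$-energy identity strictly positive; and, on the technical side, justifying that $h=v'$ satisfies the differentiated equation in the stated weak sense and that the integrations by parts remain valid up to the singular endpoint $r=0$, which is where the extra regularity $v\in H^{3}(B)$ provided by Lemma~\ref{sec:exist-solut-via} is needed.
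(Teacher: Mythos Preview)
Your argument is correct and complete. Both the paper and you reduce to the differentiated radial equation for $h=v'$, but you then proceed differently. The paper argues pointwise: if $z=v'$ had a negative local minimum at $r_0\in(0,1)$, then $z'(r_0)=0$ kills the first-order term and the positivity of $c(r)=1+\tfrac{N-1}{r^2}+(b(r)r)'$ forces the whole left-hand side to be negative near $r_0$; since $w'\ge 0$ this gives $z''<0$ there, contradicting that $r_0$ is a minimum. Your route is variational: you test against $h^-$ and, after integrating by parts, arrive at a weighted energy identity whose zeroth-order coefficient is $1+\tfrac{N-1}{r^2}+\tfrac12(b(r)r)'-\tfrac{N-1}{2}b(r)$, which you show is strictly positive using \emph{both} halves of assumption~$(b)$. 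The paper's pointwise argument actually uses only the derivative inequality in~$(b)$ for this lemma (the sign condition $b\le 0$ enters elsewhere), so in that narrow sense it is slightly sharper; on the other hand, your approach handles the first-order drift term cleanly without invoking a local-minimum contradiction, and you make the nonnegativity $v\ge 0$ explicit via the maximum principle, which the paper leaves implicit. The regularity justifications you flag (boundedness of $v''$ near $r=0$ via the integrated ODE, vanishing of the boundary term $r^{N-1}h'$) are exactly the right checks.
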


\begin{proof}
Since $w \in \cone \subset H^1(B) \cap L^\infty(B)$, it follows from
Lemma \ref{sec:exist-solut-via} that there exists
a unique solution $v$ in $\cH(B) \cap H^3(B) \cap W^{2,p}(B)$ (for every
$p<\infty$). Hence $v \in C^{1,\alpha}(\overline B)$ and $\partial_\nu v= 0$ on $\partial B$. Since the solution is radial (because
it is unique), we may write the equation for $v$ in polar coordinates
as 
\[
-v''+ \Bigl(b(r)r -\frac{N-1}{r}\Bigr) v'+v=w, \qquad v'(0)=v'(1)=0,
\]
where $v'$ denotes the derivative with respect to $r=|x|$. Note that,
as a function of $r$, we have $z:=v' \in H^2_{loc}(0,1)$, so
differentiation yields
\[
\Bigl(b(r)r -\frac{N-1}{r}\Bigr)z'+
\Bigl([b(r)r]'+\frac{N-1}{r^2}+ 1 \Bigr)z =
z'' + w'.
\]
We point out that the left hand side of this equation is continuous in
$(0,1)$ (since $H^2_{loc}(0,1) \subset C^1(0,1)$), and thus the continuity of the right hand side follows. Now suppose by
contradiction that $z$ attains a negative local minimum at a
point $r_0 \in (0,1)$, then at this point we have $z'(r_0)=0$ and 
$$
\Bigl([b(r)r]'+\frac{N-1}{r^2}+ 1 \Bigr)z\Big|_{r_0}<0
$$
by assumption ($b$). Therefore, by continuity, there exists a neighborhood $U$ of $r_0$ in
$(0,1)$ with
$$
\Bigl(b(r)r -\frac{N-1}{r}\Bigr)z'+
\Bigl([b(r)r]'+\frac{N-1}{r^2}+ 1 \Bigr)z <0 \qquad \text{in $U$.}
$$
Since $w' \ge 0$ in $(0,1)$, it then follows that 
$z''<0$ a.e. in $U$, which yields that $z'$ is strictly decreasing in $U$. This
however contradicts our assumption that $z$ attains a negative minimum
at $r_0$. Since moreover $z(0)=z(1)=0$, we conclude that $v'=z \ge
0$ in $(0,1)$, so that $v \in \cone$.  
\end{proof}

\begin{corollary}\label{corollary:T_is_cone_map}
The operator $T$ defined by \eqref{eq:definition_of_T} satisfies $T(\cone)\subseteq \cone$.
\end{corollary}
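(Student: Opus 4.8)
The plan is to read the corollary off directly from Lemma~\ref{lemma:T_is_cone_map}. Indeed, for $u\in\cone$ the function $T(u)$ is by definition the unique solution of the boundary value problem treated in that lemma with right-hand side $w=a(|x|)f(u(\cdot))$. Hence, provided we know that this $w$ again lies in $\cone$, Lemma~\ref{lemma:T_is_cone_map} gives $T(u)=T(w)\in\cone$, which is exactly the claim. So the only point that needs checking is that the substitution operator $u\mapsto a(|x|)f(u)$ maps $\cone$ into $\cone$.

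To verify this, I would proceed in two steps. First, the regularity: by Lemma~\ref{lemma:embedding_of_cone_in_L_infty} we have $u\in L^\infty(B)$, so $f(u)\in L^\infty(B)$, and since $f\in C^1$ and $u\in H^1(B)$ the chain rule yields $f(u)\in H^1(B)$; multiplying by $a(|x|)$ with $a\in C^1([0,1])$ keeps us in $H^1(B)$, so $w\in H^1(B)$. Second, the sign and monotonicity: assumptions $(f1)$ and $(f2)$ give $f(0)=0$ with $f$ nondecreasing, hence $f\ge 0$ on $[0,\infty)$; combined with $(a)$, which gives $a\ge a_0>0$ with $a$ nondecreasing, the function $r\mapsto a(r)f(u(r))$ is a product of two nonnegative, nondecreasing functions of $r\in[0,1]$ (using that $u$ itself is nonnegative and nondecreasing), hence it is nonnegative and nondecreasing. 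Therefore $w\in\cone$.

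There is no genuine obstacle at this stage: the entire substance of the statement is already contained in Lemma~\ref{lemma:T_is_cone_map}, whose maximum-principle/Hopf-type argument is what forces the radial derivative $v'$ to be nonnegative. The present corollary merely composes that lemma with the elementary observation that $\cone$ is invariant under $u\mapsto a(|x|)f(u)$, so I expect the proof to be a couple of lines.
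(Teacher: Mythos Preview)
Your proposal is correct and follows exactly the paper's approach: observe that $a(|x|)f(u)\in\cone$ whenever $u\in\cone$ (which the paper also notes just before Lemma~\ref{lemma:T_is_cone_map}), then apply Lemma~\ref{lemma:T_is_cone_map}. Your added detail on why $w\in H^1(B)$ via the chain rule and why it is nonnegative and nondecreasing is fine; the paper simply asserts this in one line.
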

\begin{proof}
Observe that if $u\in \cone$, the assumptions on $a(r)$ and $f$ imply that $a(r)f(u)\in \cone$. Henceforth, the conclusion follows from Lemma
\ref{lemma:T_is_cone_map}.
\end{proof}

In order to apply a fixed point theorem in the cone, we need a priori estimates on the solutions of \eqref{eq:main_equation} and on the
solutions of a family of auxiliary problems depending on some parameters $\lambda\geq0$ and $0<\mu\leq1$.
\begin{lemma}\label{lemma:K_1}
There exists a constant $\bar \lambda$ such that the following problem
\begin{equation}\label{eq:equation_with_lambda}
\left\{\begin{array}{ll}
        -\Delta u+b(r)\, x\!\cdot\! \nabla u + u=a(r)f(u)+\l \quad &\text{ in } B \\
	u\geq0 &\text{ in } B\\
	\partial_\nu u=0 &\text{ on } \partial B,
       \end{array}
\right.
\end{equation}
does not admit any solution in $\cC$ for $\l>\bar\lambda$. Moreover, there exists a constant $K_1$ such that every solution $u$ of
\eqref{eq:equation_with_lambda} with $0\leq\l\leq \bar\lambda$ satisfies $\|u\|_{L^1(B)}\leq K_1$.
\end{lemma}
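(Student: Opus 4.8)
The plan is to argue by contradiction, using the key fact that $f$ grows at least linearly with rate strictly bigger than $1/a_0$ (assumption $(f3)$) together with the $L^\infty$–$W^{1,1}$ bound on the cone (Lemma \ref{lemma:embedding_of_cone_in_L_infty}). First I would fix, using $(f3)$, a constant $c > 1/a_0$ and a threshold $t_0 > 0$ such that $f(s) \ge cs$ for all $s \ge t_0$; by $(f1)$ and $(f2)$, $f$ is nonnegative and nondecreasing everywhere, so in general $a(r)f(u) \ge a_0 f(u) \ge a_0 c\, u - a_0 c\, t_0$ pointwise (the correction term $a_0 c\, t_0$ absorbs the region where $u < t_0$). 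Integrating equation \eqref{eq:equation_with_lambda} over $B$ and using $\int_B \Delta u = \int_{\partial B}\partial_\nu u = 0$ and $\int_B x\!\cdot\!\nabla u\, dx$ controlled by standard integration by parts (or simply noting the divergence-type structure), one obtains a scalar inequality of the form
\[
\int_B u\, dx + \lambda |B| \le \int_B a(r) f(u)\, dx + (\text{lower-order terms}),
\]
and more importantly a \emph{reverse} type estimate: testing with the constant function and using the lower bound $a(r)f(u) \ge a_0 c\, u - C_0$ forces
\[
(a_0 c - 1)\int_B u\, dx + \lambda|B| \le C_0|B| \quad\text{or similar},
\]
which, since $a_0 c - 1 > 0$, simultaneously bounds $\lambda$ from above (giving $\bar\lambda$) and bounds $\|u\|_{L^1(B)}$ (giving $K_1$). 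The term $\int_B b(r)\,x\!\cdot\!\nabla u\, dx$ must be handled: since $b \le 0$ and $u \in \cone$ is nondecreasing (so $x\!\cdot\!\nabla u \ge 0$ a.e.), this term is $\le 0$ and hence only helps the inequality in the right direction, so it can simply be dropped.

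The main obstacle I anticipate is making the pointwise lower bound $a(r)f(u(x)) \ge a_0 c\, u(x) - C_0$ genuinely uniform and independent of $u$, and then extracting \emph{both} conclusions from a single integrated inequality. The cleanest route: set $M := \sup_{s\in[0,t_0]} f(s) < \infty$ (finite by continuity) and note $f(s) \ge cs - (ct_0 + M)$ for all $s \ge 0$ — indeed for $s\ge t_0$ this is $f(s)\ge cs \ge cs-(ct_0+M)$, and for $s \le t_0$ the right side is $\le cs - M \le -M + ct_0 \cdot\mathbf{1} \le$ hmm; more carefully take the bound $f(s)\ge cs-ct_0$ for $s\ge t_0$ and $f(s)\ge 0 \ge cs - ct_0$ for $s\le t_0$, so uniformly $f(s) \ge cs - ct_0$ on $[0,\infty)$. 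Then $a(r)f(u) \ge a_0(cu - ct_0)$ since $f(u)\ge 0$ and $a(r)\ge a_0$. Integrating \eqref{eq:equation_with_lambda} against the constant $1$ gives
\[
\int_B u\, dx + \lambda|B| \ge \int_B a(r)f(u)\,dx \ge a_0 c \int_B u\, dx - a_0 c\, t_0 |B|,
\]
wait — the sign must be checked: integrating the PDE gives $\int_B u + \lambda|B| = \int_B a(r)f(u)\,dx + \int_B b(r)x\!\cdot\!\nabla u\,dx \ge \int_B a(r)f(u)\,dx + (\text{negative})$, so actually the inequality goes the inconvenient way. The fix is to instead bound $\int_B a(r)f(u)$ from above is not available; rather, one should recognize that for a \emph{solution} the identity $\int_B u\,dx + \lambda|B| = \int_B a(r)f(u)\,dx + \int_B b(r)x\!\cdot\!\nabla u\,dx$ holds with equality, and since the RHS first term is $\ge a_0 c\int_B u - a_0 c t_0|B|$ while the second is $\le 0$, we get $\int_B u\,dx + \lambda|B| + a_0 c t_0|B| \ge a_0 c \int_B u\,dx$, i.e. $(a_0 c - 1)\int_B u\,dx \le a_0 c t_0 |B| - \lambda|B|$. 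Since $a_0 c - 1 > 0$ and the left side is $\ge 0$, this yields both $\lambda \le a_0 c\, t_0 =: \bar\lambda$ and, for $0 \le \lambda \le \bar\lambda$, $\|u\|_{L^1(B)} = \int_B u\,dx \le \frac{a_0 c\, t_0 |B|}{a_0 c - 1} =: K_1$. That is the whole argument; the only remaining technical point is justifying the integration (all terms are integrable because $u\in\cone\subset L^\infty(B)$, $b$ is bounded continuous, and $x\!\cdot\!\nabla u \in L^1$ since $u\in H^1$), and noting that $\int_B b(r)\,x\!\cdot\!\nabla u\,dx \le 0$ because $b\le 0$ and $x\!\cdot\!\nabla u = |x|\,\partial_r u \ge 0$ a.e. for $u$ nondecreasing.
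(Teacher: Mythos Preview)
Your approach is correct and essentially identical to the paper's: both integrate the equation over $B$, use that $b(r)\,x\!\cdot\!\nabla u \le 0$ on $\cone$, and exploit the linear lower bound on $f$ coming from $(f3)$ to obtain a single scalar inequality giving both $\bar\lambda$ and $K_1$; the only cosmetic difference is that the paper splits the domain into $\{u\le M\}$ and $\{u>M\}$ whereas you use the equivalent uniform pointwise bound $f(s)\ge cs - ct_0$. One slip to fix: the integrated identity should read $\int_B u\,dx + \int_B b(r)\,x\!\cdot\!\nabla u\,dx = \int_B a(r)f(u)\,dx + \lambda|B|$ (you placed $\lambda|B|$ and the $b$-term on the wrong sides), and then $\int_B u \ge \text{LHS} = \text{RHS} \ge a_0 c\int_B u - a_0 c\,t_0|B| + \lambda|B|$ gives your final inequality $(a_0 c - 1)\int_B u \le a_0 c\,t_0|B| - \lambda|B|$ directly.
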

\begin{proof}
By assumption ($f3$) there exists $M,\delta >0$ such that
\begin{equation}
  \label{eq:M-delta-estimate}
\frac{f(s)}{s}\geq \frac{1+\delta}{a_0} \qquad \text{for every } s\geq M,
\end{equation}
where $a_0=a(0)$. Let $u\in\cone$ be a solution of \eqref{eq:equation_with_lambda}. Since $b(r)x\!\cdot\! \nabla u(x) \le 0$ by assumption ($b$), integrating the equation in \eqref{eq:equation_with_lambda} in $B$ yields
\begin{align*}
\int_B u\,dx &\ge  \int_B [u+ b(r)\, x\!\cdot\! \nabla u(x)
]\,dx=\int_{\{u\leq M\}} a(r) f(u)\,dx+\int_{\{u> M\}} a(r) f(u)\,dx
+\l |B|\\
& \geq  \int_{\{u> M\}} a(r) \frac{1+\delta}{a_0} u \,dx +\l |B| \ge 
(1+\delta) \int_{\{u> M\}} u \,dx +\l |B|.
\end{align*}
Therefore
\[
M|B| \geq \int_{\{u\leq M\}} u \,dx \geq \delta \int_{\{u> M\}} u \,dx +\l |B|
\]
and the lemma is proved.
\end{proof}

From now on, we fix $\bar \l$ as in the previous lemma.
\begin{lemma}\label{lemma:K_infty}
Assume $0\leq\l\leq\bar\l$. There exist two constants $K_\infty, K_2$ such that if $u\in\cone$ solves \eqref{eq:equation_with_lambda}, then 
\[
\|u\|_{L^\infty(B)}\leq K_\infty \qquad \text{and}\qquad \|u\|_{H^1(B)}\leq K_2.
\]
\end{lemma}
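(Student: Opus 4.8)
The plan is to bound $\|u\|_{L^\infty(B)}$ first, by combining the $L^1$ estimate of Lemma~\ref{lemma:K_1} with the $L^\infty$--$W^{1,1}$ embedding of Lemma~\ref{lemma:embedding_of_cone_in_L_infty}; for this I need an $L^1$ bound on $\nabla u$, which I extract from the radial ODE. Since $a(|x|)f(u)+\lambda\in\cone$ when $u\in\cone$, the argument in the proof of Lemma~\ref{lemma:T_is_cone_map} (via Lemma~\ref{sec:exist-solut-via}) shows that any solution $u\in\cone$ of \eqref{eq:equation_with_lambda} is regular, in particular $u\in C^1(\overline B)$ with $u'(0)=u'(1)=0$ and $u\in W^{2,1}_{loc}((0,1))$ as a function of $r$. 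Writing \eqref{eq:equation_with_lambda} in polar coordinates, multiplying by $r^{N-1}$ and integrating over $(0,\rho)$, I obtain
\[
\rho^{N-1}u'(\rho)=\int_0^\rho b(r)r^N u'(r)\,dr+\int_0^\rho r^{N-1}u(r)\,dr-\int_0^\rho r^{N-1}\bigl(a(r)f(u)+\lambda\bigr)\,dr,
\]
the boundary term at the origin vanishing because $r^{N-1}u'(r)\to0$ as $r\to0^+$. Now $b\le0$ and $u'\ge0$ by ($b$) and the definition of $\cone$, $f\ge0$ by ($f1$)--($f2$), and $\lambda\ge0$, so the first and third integrals are nonpositive; hence $\rho^{N-1}u'(\rho)\le\int_0^\rho r^{N-1}u(r)\,dr$.

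The right-hand side is nondecreasing in $\rho$, so integrating over $\rho\in(0,1)$ gives $\int_0^1 u'(\rho)\rho^{N-1}\,d\rho\le\int_0^1 r^{N-1}u(r)\,dr$, that is, $\|\nabla u\|_{L^1(B)}\le\|u\|_{L^1(B)}\le K_1$. Therefore $\|u\|_{W^{1,1}(B)}\le 2K_1$, and Lemma~\ref{lemma:embedding_of_cone_in_L_infty} yields $\|u\|_{L^\infty(B)}\le K_\infty:=2CK_1$. Once the $L^\infty$ bound is available, the $H^1$ estimate follows from linear elliptic regularity: $\|a(|x|)f(u)+\lambda\|_{L^2(B)}\le\bigl(\|a\|_\infty\max_{[0,K_\infty]}|f|+\bar\lambda\bigr)|B|^{1/2}$ is bounded independently of $u$ and $\lambda$, and $u$ is the (unique) solution in $\cH(B)$ of $\cL u=a(|x|)f(u)+\lambda$, so Lemma~\ref{sec:exist-solut-via} gives $\|u\|_{H^2(B)}\le C\|a(|x|)f(u)+\lambda\|_{L^2(B)}$, whence $\|u\|_{H^1(B)}\le K_2$ for a suitable constant $K_2$. (Alternatively one may test the equation with $u$ and integrate the drift term by parts, using $b\in C([0,1])$ and the $L^\infty$ bound on $u$.)

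The only delicate point is the gradient estimate: one must check that the solution is regular enough to write the radial ODE and to discard the boundary term at $r=0$, and one must use all four sign conditions ($b\le0$, $u'\ge0$, $f\ge0$, $\lambda\ge0$) simultaneously so that both "bad" integrals are eliminated. The passage from $\|u\|_{L^\infty}$ to $\|u\|_{H^1}$ is then routine.
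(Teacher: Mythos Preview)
Your proof is correct and follows essentially the same route as the paper: both integrate the radial ODE, use the sign conditions $b\le0$, $u'\ge0$, $f\ge0$, $\lambda\ge0$ to obtain $\rho^{N-1}u'(\rho)\le\int_0^\rho r^{N-1}u(r)\,dr$, deduce $\|u\|_{W^{1,1}(B)}\le 2K_1$, and apply Lemma~\ref{lemma:embedding_of_cone_in_L_infty}. For the $H^1$ bound the paper chooses the testing-with-$u$ route you mention parenthetically, while your primary argument via Lemma~\ref{sec:exist-solut-via} gives the slightly stronger $H^2$ estimate; either works.
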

\begin{proof}
Let $u\in\cone$ be a solution of \eqref{eq:equation_with_lambda}. In radial coordinates, the equation for $u$
can be written in the form
\[
(r^{N-1}u')'= r^{N-1} (u(r)+b(r)ru'(r)- a(r)f(u(r))-\l) \le r^{N-1}u(r).
\]
Therefore
\[
u'(r)\leq\frac{1}{r^{N-1}}\int_0^r u(t)t^{N-1}\,dt
\leq\frac{1}{r^{N-1}|\partial B|}\int_B u \,dx\leq\frac{K_1}{r^{N-1} |\partial B|},
\]
with $K_1$ defined in the previous lemma. Since $u'\geq0$, we deduce from the previous inequality that $\|u\|_{W^{1,1}(B)}\leq 2K_1$,
so that Lemma \ref{lemma:embedding_of_cone_in_L_infty} gives the first estimate.
As for the estimate of the $H^1$-norm, we multiply the equation in \eqref{eq:equation_with_lambda} by $u$ and integrating in the ball yields
$$
\int_B\left(|\nabla u|^2+u^2\right)\, dx= \int_B [a(r)f(u)u-b(r)\,
x\!\cdot\! \nabla u]u\,dx +\bar\l \int_B u \,dx.
$$
Since $u$ is a priori bounded in $W^{1,1}(B)$ and $L^\infty(B)$, the
right hand side is a priori bounded as well, and the a priori bound in $H^1(B)$ follows.
\end{proof}

\begin{remark}
\label{sec:exist-solut-via-1}
An inspection of the proofs of Lemma~$\ref{lemma:K_1}$ and
$\ref{lemma:K_infty}$ shows the following. First, it is possible to choose
$$
\bar \lambda:= \min \{s \ge 0\::\: \text{$f(t) \ge t$ for $t \ge s$}\}
$$ 
in Lemma~$\ref{lemma:K_1}$. Moreover, the a priori bounds in
these lemmas only depend on some properties of $f$ and not on the nonlinearity
itself. More precisely, if $M>0$ and $\delta>0$ are fixed, then
$K_1,K_2$ and $K_\infty$ can be chosen independently for all
nonnegative nonlinearities $f$ satisfying
(\ref{eq:M-delta-estimate}). This will be important in Section~$\ref{section:variational_method}$ where we work with a truncated problem.   
\end{remark}
\begin{lemma}\label{lemma:k_infty}
There exists a constant $k_2$ such that for every $0< \mu< 1$ and for every solution $u\not\equiv0$ of
\begin{equation}\label{eq:equation_with_mu}
\left\{\begin{array}{ll}
        -\Delta u+b(r)\, x\!\cdot\! \nabla u+u=\mu a(r)f(u) \quad &\text{ in } B \\
	u\geq0 &\text{ in } B\\
	\partial_\nu u=0 &\text{ on } \partial B,
       \end{array}
\right.
\end{equation}
we have $\|u\|_{H^1(B)}\geq k_2$.
\end{lemma}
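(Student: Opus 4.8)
The plan is to convert hypothesis $(f1)$ --- the superlinear smallness of $f$ near $0$ --- into a quantitative lower bound, using the $L^\infty$-control on $\cone$ from Lemma~\ref{lemma:embedding_of_cone_in_L_infty} and the boundedness of $\cL^{-1}$ from Lemma~\ref{sec:exist-solut-via}; the drift term $b$ will enter only through the latter. First I would record two preliminary facts. For $M>0$ put $\gamma(M):=\sup_{0<t\le M}|f(t)|/t$: this is finite and $\gamma$ is nondecreasing, while $(f1)$ (which says $f(t)/t\to f'(0)=0$ as $t\to0^+$, so that $t\mapsto f(t)/t$ extends continuously to $t=0$) gives $\gamma(M)\to0$ as $M\to0^+$; moreover $|f(t)|\le\gamma(M)\,t$ for $0\le t\le M$. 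Secondly, Lemma~\ref{lemma:embedding_of_cone_in_L_infty} together with the Cauchy--Schwarz estimate $\|v\|_{W^{1,1}(B)}\le\sqrt2\,|B|^{1/2}\|v\|_{H^1(B)}$ on the bounded domain $B$ provides a constant $C_1=C_1(N)$ with $\|u\|_{L^\infty(B)}\le C_1\|u\|_{H^1(B)}$ for all $u\in\cone$.

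Next, let $u\in\cone$, $u\not\equiv0$, solve \eqref{eq:equation_with_mu} for some $\mu\in(0,1)$. Since $u\in L^\infty(B)$, the right-hand side $\mu a(r)f(u)$ lies in $L^\infty(B)\subset L^2(B)$; hence (note that $u=\mu T(u)$, so $u\in\cH(B)$ by definition of $T$) Lemma~\ref{sec:exist-solut-via} yields
\[
\|u\|_{H^1(B)}\le\|u\|_{H^2(B)}\le C\,\|\mu a(r)f(u)\|_{L^2(B)}\le C\|a\|_{L^\infty}\,\|f(u)\|_{L^2(B)},
\]
where $0<\mu<1$ was used. Combining $|f(u)|\le\gamma(\|u\|_{L^\infty})\,u$ with $\|u\|_{L^\infty}\le C_1\|u\|_{H^1}$ and the monotonicity of $\gamma$ gives $\|f(u)\|_{L^2(B)}\le\gamma\bigl(C_1\|u\|_{H^1(B)}\bigr)\|u\|_{H^1(B)}$, and dividing by $\|u\|_{H^1(B)}>0$ we arrive at
\[
1\le C\|a\|_{L^\infty}\,\gamma\bigl(C_1\|u\|_{H^1(B)}\bigr).
\]

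Finally, since $\gamma(M)\to0$ as $M\to0^+$ I would fix $k_2>0$ so small that $C\|a\|_{L^\infty}\,\gamma(C_1k_2)<1$. If a nontrivial solution had $\|u\|_{H^1(B)}<k_2$, monotonicity of $\gamma$ would force $1\le C\|a\|_{L^\infty}\gamma(C_1k_2)<1$, a contradiction; thus every nontrivial solution satisfies $\|u\|_{H^1(B)}\ge k_2$, which is the claim. The only steps needing a little attention are the continuity of $t\mapsto f(t)/t$ up to $t=0$ (so that $\gamma$ vanishes at $0$ --- this is exactly where $(f1)$ is used, and is the conceptual heart of the lemma) and the regularity assertion $u\in\cH(B)$ before Lemma~\ref{sec:exist-solut-via} can be invoked; neither is a genuine obstacle. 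Equivalently one could argue by contradiction: nontrivial solutions $u_n$ with $\|u_n\|_{H^1(B)}\to0$ have $\|u_n\|_{L^\infty(B)}\to0$, so their normalizations $v_n:=u_n/\|u_n\|_{H^1(B)}$ solve $\cL v_n=g_n$ with $\|g_n\|_{L^2(B)}\le\|a\|_{L^\infty}\gamma(\|u_n\|_{L^\infty(B)})\,\|v_n\|_{L^2(B)}\to0$, whence $\|v_n\|_{H^1(B)}\to0$ by Lemma~\ref{sec:exist-solut-via}, contradicting $\|v_n\|_{H^1(B)}=1$.
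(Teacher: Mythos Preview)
Your proof is correct and follows essentially the same strategy as the paper: both combine the $L^\infty$-bound on $\cone$ from Lemma~\ref{lemma:embedding_of_cone_in_L_infty}, the smallness of $f(s)/s$ near $0$ from $(f1)$, and the boundedness of $\cL^{-1}$ from Lemma~\ref{sec:exist-solut-via}. The only cosmetic difference is that the paper argues by contradiction directly in $L^2$ (using the lower bound $\|\cL u\|_{L^2}\ge C\|u\|_{L^2}$), whereas you package the same idea as a direct quantitative estimate in $H^1$ via $\|u\|_{H^2}\le C\|\cL u\|_{L^2}$; your closing contradiction sketch is in fact almost identical to the paper's argument.
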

\begin{proof}
By Lemma~\ref{sec:exist-solut-via}, there exists a constant $C>0$ such that 
\begin{equation}
  \label{eq:isomorphism}
\|-\Delta u +b(r)\,  x \!\cdot\! \nabla u +u\|_{L^2(B)} \ge C
\|u\|_{L^2(B)}
\qquad \text{for all $u \in \cH(B)$.}
\end{equation}
Assume by contradiction the existence of $u_n\not\equiv0$, solutions of \eqref{eq:equation_with_lambda} with $0<\mu_n<1$, such that
$\|u_n\|_{H^1(B)}\to0$ as $n\to+\infty$. Then
$\|u_n\|_{L^\infty(B)}\to0$ by Lemma
\ref{lemma:embedding_of_cone_in_L_infty}. By assumption ($f1$) we have
\[
\frac{f(u_n(x))}{u_n(x)} \leq \frac{1}{n} \quad \text{for all $x \in B$},
\]
for $n$ sufficiently large, and it then follows from
(\ref{eq:isomorphism}) that 
\[
C^2 \|u_n\|_{L^2(B)}^2 \le \mu_n^2 \int_B [a(r) f(u_n)]^2 
\leq \Bigl(\frac{\mu_n a(1)}{n}\Bigr)^2 \int_B u_n^2\, dx
=\Bigl(\frac{\mu_n a(1)}{n}\Bigr)^2 \|u_n\|_{L^2(B)}^2.
\]
Since $u_n\not\equiv0$ for every $n$, this yields a contradiction for $n$ large.
\end{proof}

We now turn to the proof of Theorem \ref{theorem:non_constant_a}. We are in a position to apply a generalization of a fixed point theorem by Krasnosel$'$ski{\u\i} (see \cite{Krasnoselskii2,Krasnoselskii1}) to the
operator $T$ defined by \eqref{eq:definition_of_T} in the cone $\cone$.
This theorem is proved by Benjamin in \cite{Benjamin}, Appendix 1, but we refer to Kwong \cite{Kwong} where the approach is more elementary. We also quote \cite{Amann} and \cite{Nussbaum}.

\begin{proof}[Proof of Theorem \ref{theorem:non_constant_a}]
Let us check the assumptions of the fixed point theorem in \cite{Kwong} (expansive form) :
\begin{itemize}
\item[(i)] $T:\cone\to\cone$ by Corollary \ref{corollary:T_is_cone_map} ;
\item[(ii)]  $T$ is completely continuous on $\cone$. Indeed let $\{u_n\}\subset \cone$ be a sequence bounded in $H^1(B)$. By Lemma
\ref{lemma:embedding_of_cone_in_L_infty} it is bounded in
$L^\infty(B)$, hence $\{v_n=T(u_n)\}$ is
bounded in $H^2(B)$ by Lemma~\ref{sec:exist-solut-via}. Therefore, by
the compactness of the embedding $H^2(B) \hookrightarrow H^1(B)$,
a subsequence of $(v_n)_n$ converges in the $H^1$-norm ;
\item[(iii)] For every $\l\geq0$, for every $u\in \cone$ with
  $\|u\|_{H^1(B)}=2K_2$ (defined in Lemma \ref{lemma:K_infty}) we have $u-T(u)\neq\l$. In fact notice that $u-T(u)=\l$ if and only if $u$ solves equation \eqref{eq:equation_with_lambda}, hence
this property is a consequence of Lemma \ref{lemma:K_infty} ;
\item[(iv)] for every $0<\mu<1$, for every $u\in \cone$ with
  $\|u\|_{H^1(B)}=k_2/2$ (defined in Lemma \ref{lemma:k_infty})
  we have $\mu T(u)\neq u$. In fact we have $\mu T(u)=u$ if and only if $u$ solves equation \eqref{eq:equation_with_mu}, hence
property (iv) is a consequence of Lemma \ref{lemma:k_infty}.
\end{itemize}
We then conclude that there exists a fixed point of $T$ in $\cone$. Such a fixed point is of course a nonconstant solution of \eqref{eq:main_equation} since $a$ is nonconstant. Moreover it is strictly positive and strictly increasing by the maximum principle. This completes the proof.
\end{proof}

\section{Existence of solutions via a variational method}\label{section:variational_method}
In the case where $a$ is a constant function, say $a\equiv1$, the following
proposition and remark show that (\ref{eq:main_equation_a=1}) may only admit the
constant solution $u \equiv u_{0}$ in $\spc$. The argument is adapted from \cite{BonheureGrumiau} where it is shown that if $f(u)=u^{p}$ and $p$ is close to $1$, $u_{0}\equiv 1$ is the unique solution of (\ref{eq:main_equation_a=1}). 

Recall that $\lambda_2^{rad}>1$ is the second radial eigenvalue of
$-\Delta+1$ in the unit ball with Neumann boundary conditions.
Fix $\delta \in (0,\lambda_2^{rad})$ and let $M>0$.  By Lemma~\ref{lemma:K_infty} and Remark \ref{sec:exist-solut-via-1}, there
exists $K_\infty>0$ such that, if $f$ satisfies $(f1)$--$(f3)$ and 
(\ref{eq:M-delta-estimate}) with these values of $M$, $\delta$ and $a_0 \equiv 1$, then every solution
$u \in \cone$ of (\ref{eq:main_equation_a=1}) satisfies $\|u\|_\infty
\le K_\infty$.
\begin{proposition}\label{proposition:example}
Let $\delta \in (0,\lambda_2^{rad})$ and $M>0$. Assume $f$ satisfies $(f1)$--$(f3)$ and 
(\ref{eq:M-delta-estimate}) with $a_{0}=1$. If $f'(s)< \lambda_2^{rad}$ for every $s\in [0,K_\infty]$,
then (\ref{eq:main_equation_a=1}) only admits constant solutions in $\spc$.
\end{proposition}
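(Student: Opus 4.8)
The plan is to show that every solution $u\in\spc$ of (\ref{eq:main_equation_a=1}) is constant, by linearizing the equation along the radial variable and comparing the linearized equation with the one satisfied by the radial derivative of the second radial Neumann eigenfunction of $-\Delta+1$. By elliptic regularity, a solution $u$ belongs to $C^2(\overline B)$ and is of class $C^3$ in $(0,1)$; being radial and positive it satisfies $u>0$ on $[0,1]$ and $u'(0)=u'(1)=0$.

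First I would establish an a priori $L^\infty$-bound valid for \emph{all} radial solutions, not just those in $\cone$. From $(r^{N-1}u')'=r^{N-1}(u-f(u))\le r^{N-1}u$ one gets, integrating from $0$, that $u'(r)\le\frac{r}{N}\|u\|_{L^\infty(B)}$, hence $u(1)\ge(1-\frac{1}{2N})\|u\|_{L^\infty(B)}$. On the other hand, evaluating the equation at $r=1$ (where $u'(1)=0$) yields $u''(1)=u(1)-f(u(1))$; so if $u(1)>M$ then $(r^{N-1}u')'<0$ on the largest interval $(r_1,1)$ on which $u>M$, whence $r^{N-1}u'>0$ there and $u$ is increasing, so $u(1)=M+\int_{r_1}^1u'\le M+\frac{1}{2N}\|u\|_{L^\infty(B)}$; combined with the previous inequality this bounds $\|u\|_{L^\infty(B)}$ by a multiple of $M$ (the case $u(1)\le M$ being immediate). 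Enlarging $K_\infty$ if necessary---it still depends only on $M$, $\delta$ and $N$---we may assume $\|u\|_{L^\infty(B)}\le K_\infty$, so that $f'(u(r))<\lambda_2^{rad}$ for all $r\in[0,1]$.

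Next I would set $\psi:=u'$ and differentiate the radial form of the equation, obtaining
\[
-\psi''-\frac{N-1}{r}\psi'+\Bigl(\frac{N-1}{r^2}+1\Bigr)\psi=f'(u)\,\psi\quad\text{in }(0,1),\qquad\psi(0)=\psi(1)=0.
\]
Let $\phi_2$ be a second radial Neumann eigenfunction of $-\Delta+1$ in $B$; by Sturm--Liouville theory $\phi_2$ is strictly monotone, so $\psi_2:=\phi_2'$ has a strict sign on $(0,1)$, normalized to $\psi_2>0$, and $\psi_2$ solves the displayed equation with $f'(u)$ replaced by the constant $\lambda_2^{rad}$, with $\psi_2(0)=\psi_2(1)=0$. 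Assume for contradiction that $u$ is nonconstant, so $\psi\not\equiv0$, and choose a nodal interval $(\alpha,\beta)\subseteq(0,1)$ of $\psi$, replacing $\psi$ by $-\psi$ if needed so that $\psi>0$ on $(\alpha,\beta)$. Multiplying the equation for $\psi$ by $r^{N-1}\psi_2$, that for $\psi_2$ by $r^{N-1}\psi$, subtracting and integrating over $(\alpha,\beta)$ gives the Sturm--Picone identity
\[
\Bigl[r^{N-1}\bigl(\psi_2'\psi-\psi'\psi_2\bigr)\Bigr]_\alpha^\beta+\int_\alpha^\beta r^{N-1}\bigl(\lambda_2^{rad}-f'(u)\bigr)\psi\,\psi_2\,dr=0.
\]
The boundary term is nonnegative: at an endpoint lying in $(0,1)$, $\psi$ vanishes while $\psi_2>0$ and the sign of $\psi'$ is forced by $\psi>0$ on $(\alpha,\beta)$ (so the contribution at $\beta$ is $\ge0$ and that at $\alpha$ is $\le0$); at $r=0$ it vanishes since $\psi,\psi_2=O(r)$; and at $r=1$ it vanishes since $\psi_2(1)=0$. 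The integral, on the other hand, is strictly positive, as $\lambda_2^{rad}-f'(u)>0$ and $\psi,\psi_2>0$ on $(\alpha,\beta)$. This contradicts the identity, so $\psi\equiv0$ and $u$ is constant.

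The hard part is the first step: without a bound on $\|u\|_{L^\infty(B)}$ one only knows $f'(u)<\lambda_2^{rad}$ on the a priori unlocated set $\{u\le K_\infty\}$, and the comparison collapses. A minor technical point is the verification that the second radial Neumann eigenfunction is strictly monotone and that the boundary term vanishes at the singular endpoint $r=0$; alternatively the comparison can be recast variationally, using that the first eigenvalue of $-\Delta+\frac{N-1}{r^2}+1-f'(u)$ among radial functions vanishing at $r=1$ is positive by pointwise comparison of the potential with $\frac{N-1}{r^2}+1-\lambda_2^{rad}$ (whose first eigenvalue is $0$, attained by $\psi_2$), whereas $\psi$ would realize $0$ as an eigenvalue---the first if $\psi$ has constant sign, otherwise one restricts to a nodal subdomain where the first eigenvalue is still positive by domain monotonicity.
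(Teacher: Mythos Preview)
Your second step---the Sturm--Picone comparison of $\psi=u'$ against $\psi_2=\phi_2'$---is correct and is a valid route to the conclusion once $f'(u(r))<\lambda_2^{rad}$ holds on all of $[0,1]$. The paper, however, argues far more directly: write $u=\lambda+v$ with $\lambda=\tfrac{1}{|B|}\int_B u$ and $\int_B v=0$, use the variational characterization $\lambda_2^{rad}\int_B v^2\le\int_B(|\nabla v|^2+v^2)$, test the equation against $v$, and apply the mean value theorem to get $\int_B(f(u)-f(\lambda))v=\int_B f'(\lambda+cv)\,v^2$ with $0\le c(x)\le1$. Since $\lambda+cv$ lies pointwise between $\min u$ and $\max u$, the hypothesis on $f'$ gives a strict inequality and forces $v\equiv0$. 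No ODE differentiation, no endpoint analysis at $r=0$, and no monotonicity of the eigenfunction are needed.

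Your first step, on the other hand, contains a genuine gap. From the one-sided bound $u'(r)\le\frac{r}{N}\|u\|_{L^\infty}$ you cannot deduce $u(1)\ge\bigl(1-\tfrac{1}{2N}\bigr)\|u\|_{L^\infty}$: an upper bound on $u'$ controls how fast $u$ can \emph{rise}, not how far it can fall, so integration from $0$ yields $u(0)\ge\bigl(1-\tfrac{1}{2N}\bigr)\|u\|_{L^\infty}$, not the inequality for $u(1)$ that you wrote. But then your subsequent argument---which produces an \emph{upper} bound for $u(1)$ when $u(1)>M$---no longer matches, and the chain breaks. Replacing $u(1)$ by $u(0)$ does not rescue things either: if $u(0)>M$ you get $u'<0$ near $0$, but once $u$ drops below $M$ you lose the sign of $u-f(u)$ and cannot control $u'$ further. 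The paper does not attempt an independent a~priori estimate inside this proof; it simply uses the constant $K_\infty$ fixed in the paragraph preceding the proposition (coming from Lemma~\ref{lemma:K_infty} and Remark~\ref{sec:exist-solut-via-1}) and invokes $\|u\|_{L^\infty}\le K_\infty$ directly, so that the hypothesis $f'(s)<\lambda_2^{rad}$ on $[0,K_\infty]$ already covers the range of $u$.
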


\begin{proof}
 Let $u\in \spc$ be a solution of \eqref{eq:main_equation_a=1}. We
 can write $u=v+\l$ for some $\l\in\R$ and $v \in \spc$ satisfying 
$$
\int_{B}v\,dx
 =0\qquad \text{and}\qquad \l_2^{rad}\int_B v^2\, dx \leq \int_B \left(|\nabla v|^2+v^2\right)\, dx.
$$
Multiplying (\ref{eq:main_equation_a=1}) by $v$ and integrating
 by parts, we obtain 
 \begin{align*}
\l_2^{rad}\int_B v^2\, dx &\le \int_B (|\nabla v|^2\,+v^2) dx =\int_B f(v+\lambda)v\, dx\\ 
&=\int_B [f(v+\lambda)-f(\lambda)]v\, dx = \int_B f'(\lambda+ cv) v^2\,dx,
 \end{align*}
with some function $c=c(x)$ satisfying $0\leq c\leq1$ in $B$. Now, since $\|u\|_{L^\infty(B)}\leq K_\infty$, we also have $\|\lambda+cv\|_{L^\infty(B)}\leq K_\infty$, hence $f'(\l
+cv)<\l_2^{rad}$ by assumption. This yields $v=0$.
\end{proof}

\begin{remark}
\label{sec:exist-solut-via-3}
If, in addition to the assumptions of Proposition~$\ref{proposition:example}$, $f$
only has one positive fixed point, then this fixed point is the only
radial solution of (\ref{eq:main_equation_a=1}). 
This is true e.g. if $f$ is given as $f(u)=g(u)u$ with a strictly
increasing $C^1$-function $g:[0,\infty) \to \R$ satisfying $g(0)=0$
and $\lim \limits_{t \to \infty}g(t) \in (1,\lambda_2^{rad})$.
\end{remark} 

In the remainder of this section we will prove Theorem~\ref{theorem:a=1}. For this reason in the following we will assume that $a(r)\equiv1$ and we always assume ($f1$)--($f4$) (with $a_0=1$). 
As we already mentioned in the introduction, we shall find a solution of \eqref{eq:main_equation_a=1} by a minimax technique. This will allow us to prove that it is nonconstant through an energy comparison. The first step is to consider a truncated problem which
can be cast into a variational setting in $H^1(B)$. We will then recover the original problem through the a priori bounds on the solutions proved in the previous section. 

\begin{lemma}\label{lemma:truncated_function}
There exist $p>1$ satisfying $p<\frac{N+2}{N-2}$ if $N \ge 3$ and a function $\tilde f$ satisfying ($f1$)-($f4$) and
\begin{equation}
  \label{eq:superlinear-subcritical}
\lim_{s \to \infty}\frac{\tilde f(s)}{s^p}=1,  
\end{equation}
such that if $u\in\cone$ solves $-\Delta u+u=\tilde f(u)$ in $B$ with
$\partial_{\nu}u=0$ on $\partial B$, then $u$ solves
\eqref{eq:main_equation_a=1}.\\
\end{lemma}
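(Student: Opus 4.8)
\textbf{Proof plan for Lemma~\ref{lemma:truncated_function}.}
The plan is to build $\tilde f$ by leaving $f$ untouched on a large initial interval $[0,K]$ and then replacing it, beyond that interval, with a function that behaves like $s^p$ at infinity for a cleverly chosen subcritical exponent $p>1$. The key point is that the a priori bound $K_\infty$ from Lemma~\ref{lemma:K_infty}, as refined in Remark~\ref{sec:exist-solut-via-1}, depends only on the pair $(M,\delta)$ entering the structural condition (\ref{eq:M-delta-estimate}) and not on $f$ itself. So first I would fix $M>0$ and $\delta>0$ such that $f(s)/s \ge 1+\delta$ for $s \ge M$, which is possible by ($f3$) with $a_0=1$, and read off the corresponding constant $K_\infty$ from Lemma~\ref{lemma:K_infty}. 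Since $K_\infty$ depends only on $(M,\delta)$, I then choose the truncation radius $K > \max\{K_\infty, M, u_0\}$ and insist that the modification of $f$ take place only on $(K,\infty)$; this guarantees that $\tilde f$ still satisfies (\ref{eq:M-delta-estimate}) with the \emph{same} $M,\delta$, hence any solution $u \in \cone$ of the truncated equation obeys $\|u\|_\infty \le K_\infty < K$, on which range $\tilde f \equiv f$, so $u$ solves (\ref{eq:main_equation_a=1}).

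Next I would carry out the actual construction of $\tilde f$ on $(K,\infty)$. Pick any $p>1$ with $p<\frac{N+2}{N-2}$ when $N\ge 3$ (any such $p$ works; when $N=2$ just take any $p>1$). The goal is a $C^1$ function on $[0,\infty)$ that agrees with $f$ on $[0,K]$, is nondecreasing, satisfies $\tilde f(s)/s^p \to 1$ as $s\to\infty$, and — crucially — preserves ($f4$). Since $u_0 < K$, the fixed point $u_0$ and the derivative value $f'(u_0)>\lambda_2^{rad}$ are inherited verbatim, so ($f4$) needs no extra work. For the interpolation, I would note that $f$ is $C^1$ and nondecreasing on $[0,K]$, so at $s=K$ we have $f(K)\ge 0$ and $f'(K)\ge 0$; one then glues $f$ on $[0,K]$ to the function $s \mapsto s^p$ (suitably shifted/scaled) on $[2K,\infty)$, say, by a $C^1$ nondecreasing bridge on $[K,2K]$. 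A convenient device is to set $\tilde f(s) = (1-\chi(s))f(s) + \chi(s)\,\psi(s)$ on $[K,2K]$ with a smooth cutoff $\chi$ going from $0$ to $1$ and $\psi(s)=s^p$; monotonicity of the glued function is arranged by taking $K$ large enough that $\psi(K)\ge f(K)$ and $\psi'(s)$ dominates on the bridge, possibly after rescaling $\psi$ to $\psi(s)= c\,s^p$ with $c$ chosen so that $\psi(2K)\ge f(2K)$ and $\psi \ge f$ pointwise there, and then finally renormalizing the tail so that the limit in (\ref{eq:superlinear-subcritical}) equals exactly $1$ (replace $\psi$ by $\psi/c^{1/p}$-type rescaling of the variable, or simply accept limit $c$ and absorb $c$ into $p$-homogeneity — the cleanest is to require $\tilde f(s)=s^p$ for $s\ge 2K$ after checking $s^p \ge f(s)$ there, which holds for $K$ large since $f(s)/s \le \tilde f(s)/s$ must only beat the fixed linear rate, but this needs $f$ to not grow faster than $s^p$ on $[K,2K]$ — see the obstacle below).

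The main obstacle is precisely the monotonicity-plus-matching on the bridge interval $[K,2K]$: $f$ may already be quite large at $s=K$ (there is no growth restriction on $f$!), so one cannot simply assert $s^p \ge f(s)$ there. The remedy is to choose the bridge interval and the tail model adaptively: instead of a fixed $[K,2K]$, let the tail be $\lambda_* s^p$ on $[L,\infty)$ for $L$ and $\lambda_*$ large enough that $\lambda_* L^p \ge f(L)$ and $\lambda_* p L^{p-1} \ge \sup_{[K,L]} f'$, so that a nondecreasing $C^1$ interpolation on $[K,L]$ between the graph of $f$ and the graph of $\lambda_* s^p$ exists (e.g. take $\tilde f'$ on $[K,L]$ to be any nonnegative continuous function with the prescribed endpoint values and correct total integral $\lambda_* L^p - f(K)$, which is available since that integral is positive and we may always add a nonnegative bump). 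Then $\tilde f(s)/s^p \to \lambda_*$, and a final harmless rescaling $\tilde f(s) \rightsquigarrow \lambda_*^{-1}\tilde f(\lambda_*^{1/(p-1)} s)$ — or more simply redefining $p$ is not allowed, so instead multiply the whole tail construction by $\lambda_*^{-1}$ after the fact, noting this only rescales and does not affect ($f1$)–($f3$), while ($f4$) is untouched because $u_0<K$ lies in the unmodified region — normalizes the limit to $1$. One should double-check that ($f1$) (i.e.\ $f(0)=0$, $f'(0)=0$) and ($f2$) survive, but both are automatic since $\tilde f=f$ near $0$ and $\tilde f$ is nondecreasing by construction; and ($f3$) survives because $\tilde f(s)/s \to \infty$. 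Thus $\tilde f$ and $p$ have all the required properties, and the a priori bound argument closes the loop between the truncated and original problems.
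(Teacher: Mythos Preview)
Your overall strategy is exactly the paper's: fix $M,\delta$ so that $f(s)\ge(1+\delta)s$ for $s\ge M$, invoke Remark~\ref{sec:exist-solut-via-1} to get a bound $K_\infty$ depending only on $(M,\delta)$, and then modify $f$ only beyond $\max\{K_\infty,M,u_0\}$ so that (f1)--(f4) are inherited on the unmodified region and any cone solution of the truncated problem is bounded by $K_\infty$ and hence solves the original one. That part is fine.

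The gap is in the actual construction of the tail. Two issues:
\begin{itemize}
\item[\textbf{(a)}] Your normalization to make the limit in \eqref{eq:superlinear-subcritical} equal to $1$ does not work. If you multiply only the tail by $\lambda_*^{-1}$ you destroy the $C^1$ (indeed $C^0$) matching at the gluing point; if you rescale the whole function (either multiplicatively or via $s\mapsto \lambda_*^{1/(p-1)}s$) you move the fixed point $u_0$ and lose (f4). ``Redoing the bridge with $s^p$ instead of $\lambda_* s^p$'' is the only viable option, but then you must check more than you do.
\item[\textbf{(b)}] You never verify that $\tilde f(s)\ge (1+\delta)s$ on the bridge interval. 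This is essential, because the a~priori bound $K_\infty$ applies only to nonlinearities satisfying \eqref{eq:M-delta-estimate} with the \emph{same} $M,\delta$. A nondecreasing bridge that starts at $f(K)\ge(1+\delta)K$ can still dip below the line $(1+\delta)s$ on a long interval.
\end{itemize}

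The paper resolves both problems with a single stroke: after possibly adjusting $f$ on a short interval $(s_0,s_0+\eps)$ so that the modified $f$ still satisfies $f\ge(1+\delta)s$ and has $f'(s_0+\eps)=1+\delta$, one sets
\[
\tilde f(s)=f(s_0)+(1+\delta)(s-s_0)+(s-s_0)^p \qquad (s>s_0).
\]
This is $C^1$ at $s_0$, has $\tilde f'>0$, satisfies $\tilde f(s)\ge (1+\delta)s_0+(1+\delta)(s-s_0)=(1+\delta)s$, and gives $\tilde f(s)/s^p\to 1$ automatically because $(s-s_0)^p/s^p\to 1$. No rescaling is needed, and the linear part absorbs the (possibly huge) value $f(s_0)$ without affecting the asymptotic.
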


\begin{proof}
Fix
$M,\delta>0$ such that (\ref{eq:M-delta-estimate}) holds for
$f$ with $a_0=1$, i.e. 
\begin{equation}
  \label{eq:M-delta-special}
f(s) \ge (1+\delta)s \qquad \text{for $s
\ge M$.}  
\end{equation}
 By Remark~\ref{sec:exist-solut-via-1}, there exists 
$K_\infty>0$ such that, for any nonnegative nonlinearity $\tilde
f:[0,\infty) \to \R$ satisfying $\tilde f(s) \ge (1+\delta)s$ for $s
\ge M$ and any solution $u \in \cC$ of the problem 
\begin{equation}
  \label{eq:tilde}
-\Delta u + u =\tilde f(u) \quad \text{in $B$},\qquad \partial_\nu u= 0 \quad
\text{on $\partial B$}
\end{equation}
we have $\|u\|_{L^\infty(B)} \le K_\infty$. Now fix $s_0 > \max
\{K_\infty,M\}$, and fix $p>1$ with $p<\frac{N+2}{N-2}$ if $N \ge
3$. To define the truncated function $\tilde f \in C^1([0,\infty))$
we distinguish the following cases.

\medbreak

{\em Case 1:} $f(s_0)= (1+\delta)s_0$. Then it follows from
(\ref{eq:M-delta-special}) that $f(s)$ touches the line $(1+\delta)s$
from above at $s_0$, so that the two curves are tangent at $s_0$.
Therefore $f'(s_0)=1+\delta$ and we set 
$$
\tilde f(s)= 
\left \{
  \begin{aligned}
  &f(s) \qquad \text{for $0 \le s \le s_0$};\\
  &f(s_0) + f'(s_0)(s-s_0)+(s-s_0)^p \qquad \text{for $s>s_0$}.
  \end{aligned}
\right.
$$
Then $\tilde f \in C^1([0,\infty))$
satisfies~(\ref{eq:superlinear-subcritical}), and it  
also satisfies (\ref{eq:M-delta-special}), so that every
solution of (\ref{eq:tilde}) is also a solution of
(\ref{eq:main_equation_a=1}) by the choice of $K_\infty$ and $s_0$.

\medbreak

{\em Case 2:} $f(s_0)> (1+\delta)s_0$. Then we may first modify $f$ in
a right neighborhood $(s_0,s_0+\eps)$ of $s_0$, in such a way that
$f(s)\ge (1+\delta)s$ for $s \le s_0+\eps$ and $f'(s_0+\eps)=
1+\delta$. Then we define $\tilde f$ as in Case 1 with $s_0$ replaced
by $s_0+\eps$.
\end{proof}

In the following, we may also assume that $\tilde f$ is defined on the
whole real line by setting $\tilde f \equiv 0$ on $(-\infty,0]$. It
then follows by standard arguments from the subcritical growth
assumption (\ref{eq:superlinear-subcritical}) that the functional $I:\spc\to \R$ defined by 
\[
u \mapsto I(u)=\int_B\left(\frac{|\nabla u|^2+u^2}{2}-\tilde F(u)\right)\,dx,
\]
where $\tilde F(s):= \int_0^s \tilde f(t)\,dt$ is well defined and of class $C^2$ in $H^1(B)$. Moreover, critical points of $I$ are radial solutions of \eqref{eq:main_equation_a=1}. We look for critical points of $I$ by applying a mountain pass type argument in a suitable subset of $\cone$, which is based on invariance properties of the corresponding flow. 

Since the truncated nonlinearity $\tilde f$ has a subcritical growth at infinity, the Palais-Smale condition holds. We include a proof for completeness though this is a standard fact. 

\begin{lemma}\label{lemma:palais_smale}
The action functional $I$ satisfies the Palais-Smale condition.
\end{lemma}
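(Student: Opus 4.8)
The plan is to verify the two ingredients of the Palais--Smale condition in the standard way for a subcritical functional: first show that any Palais--Smale sequence is bounded in $H^1(B)$, then extract a strongly convergent subsequence using the compact Sobolev embedding. Let $(u_n)_n \subset \spc$ be a sequence with $I(u_n)$ bounded and $I'(u_n) \to 0$ in $(\spc)^*$.

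For the boundedness step I would like to exploit an Ambrosetti--Rabinowitz type inequality for $\tilde f$, but note that $(f1)$--$(f4)$ do not directly provide one; however, the truncation $\tilde f$ constructed in Lemma~\ref{lemma:truncated_function} behaves like $s^p$ at infinity with $p>1$, and more importantly $\tilde f$ agrees with $f$ on $[0,s_0]$ where $f(s)\le (1+\delta)s$ is replaced on $(s_0,\infty)$ by a function that grows like a pure power plus a linear term. From the explicit formula $\tilde f(s) = f(s_0)+f'(s_0)(s-s_0)+(s-s_0)^p$ for $s>s_0$ (with $f'(s_0)=1+\delta$ and $f(s_0)=(1+\delta)s_0$ in Case~1, similarly in Case~2), one checks directly that there exist $\mu>2$ and $s_1>s_0$ such that $s\tilde f(s) \ge \mu \tilde F(s)$ for all $s\ge s_1$; this is the routine computation one does for power-type nonlinearities. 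Granting this, the combination $I(u_n)-\frac1\mu\langle I'(u_n),u_n\rangle$ gives, after controlling the bounded region $\{u_n \le s_1\}$ by a constant, an estimate of the form $C(1+\|u_n\|_{H^1(B)}) \ge (\tfrac12-\tfrac1\mu)\|u_n\|_{H^1(B)}^2$, whence $(u_n)_n$ is bounded in $H^1(B)$.

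Once boundedness is established, pass to a subsequence with $u_n \rightharpoonup u$ weakly in $H^1(B)$ and, by the compact embedding $H^1(B)\hookrightarrow L^{p+1}(B)$ (which holds since $p<\tfrac{N+2}{N-2}$ when $N\ge3$, and for all $p$ when $N=2$), strongly in $L^{p+1}(B)$ and in $L^2(B)$. The subcritical growth $|\tilde f(s)| \le C(1+|s|^p)$ then implies $\tilde f(u_n) \to \tilde f(u)$ in $L^{(p+1)/p}(B)$, so that $\int_B \tilde f(u_n)(u_n-u)\,dx \to 0$. Writing $\langle I'(u_n), u_n-u\rangle = \int_B(\nabla u_n\cdot\nabla(u_n-u) + u_n(u_n-u))\,dx - \int_B \tilde f(u_n)(u_n-u)\,dx$ and using $\langle I'(u_n),u_n-u\rangle \to 0$ together with the weak convergence to cancel the cross terms, one obtains $\|u_n-u\|_{H^1(B)}^2 \to 0$, i.e. $u_n \to u$ strongly.

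The main obstacle is the first step: since the original hypotheses $(f1)$--$(f4)$ include no global superlinearity or Ambrosetti--Rabinowitz condition, the boundedness of Palais--Smale sequences relies entirely on the specific form of the truncation $\tilde f$ from Lemma~\ref{lemma:truncated_function}, and one must extract from that explicit formula a genuine $\mu>2$ with $s\tilde f(s)\ge \mu\tilde F(s)$ for large $s$ (equivalently, that $\tilde F(s)/s^\mu$ is eventually nondecreasing for some $\mu>2$). This is elementary for the power-plus-linear tail but is the place where the subcritical truncation is essential; the second step is entirely standard given the compact embedding.
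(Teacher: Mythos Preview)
Your proposal is correct and follows essentially the same route as the paper: both derive an Ambrosetti--Rabinowitz inequality $s\tilde f(s)\ge \mu \tilde F(s)$ for large $s$ from the power-like tail of the truncation $\tilde f$ (the paper phrases this via the asymptotic $\tilde f(s)/s^p\to 1$, you via the explicit formula, which amounts to the same thing), and then use $I(u_n)-\tfrac1\mu I'(u_n)u_n$ to get boundedness. For the compactness step the paper writes $u_n=[-\Delta+Id]^{-1}\tilde f(u_n)+o(1)$ and passes $\tilde f(u_n)\to\tilde f(u)$ through the solution operator, whereas you use the equivalent and equally standard device of testing $I'(u_n)$ against $u_n-u$; there is no substantive difference.
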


\begin{proof}
Let $(u_n)_n \subset H^1_{rad}(B)$ be a sequence with $I'(u_n) \to 0$ and
such that $I(u_n)$ remains bounded. It easily follows from \eqref{eq:superlinear-subcritical} there exists $R_0>0$ and $\mu \in
(2,p+1)$ such that $\tilde f(s)s \ge \mu \tilde F(s)$ for $s \ge
R_0$. Hence we have 
\[
I(u_n)-\frac{1}{\mu}I'(u_n)u_n \geq \left(\frac{1}{2}-\frac{1}{\mu}\right)\|u_n\|_{H^1(B)}^2 + 
\int_{\{u_n\leq R_0\}} \left(\frac{\tilde f(u_n) u_n}{\mu}-\tilde F(u_n)\right)\,dx.
\]
Since $\mu>2$, the $H^1$-norm of the sequence $\{u_n\}$ is bounded,
hence $u_n\rightharpoonup u$ weakly in $H^1(B)$ after passing to a subsequence, where $u$ also is a
critical point of $I$. Using the subcritical
growth of $f$ given by (\ref{eq:superlinear-subcritical}) and the
compact embedding $H^1(B) \hookrightarrow L^p(B)$, it is then easy
to see that $\tilde f(u_n) \to \tilde f(u)$ strongly in the dual space $[H^1(B)]'$ of 
$H^1(B)$, and therefore -- regarding $-\Delta +Id$ as an isomorphism
$H^1(B) \to [H^1(B)]'$ -- we have
$$
u_n= [-\Delta+Id]^{-1}\tilde f(u_n) \to  [-\Delta+Id]^{-1}\tilde f(u)=u \qquad
\text{in $H^1(B)$,}
$$
as required.
\end{proof}

By assumption $(f4)$, we may now fix $u_0 \in (0,\infty)$ with
$f(u_0)=u_0$ and $f'(u_0) >\lambda_2^{rad}$. Moreover, since $u_0 <
K_\infty$, it follows from the proof
of Lemma \ref{lemma:truncated_function} that $\tilde f(u_0)= f(u_0)=u_0$
and $\tilde f'(u_0)=f'(u_0) >\lambda_2^{rad}$.
Since $\lambda_2^{rad}>1$, $u_0$ is an isolated fixed
point of $\tilde f$, so we can define 
$$
u_-:= \sup \{t \in [0,u_0)\,:\, \tilde f(t)=t\}
$$
and 
$$
u_+:= \inf \{t > u_0 \,:\, \tilde f(t)=t\}.
$$
We point out that $u_+= \infty$ is possible. Next, we define the convex set  
$$
\cC_*:= \{u \in \cC\::\: \text{$u_- \le  u \le u_+$ a.e. in $B$}\}.
$$
Clearly, $\cC_*$ is closed and convex. Moreover we have

\begin{lemma}
\label{lemma:cone_invariant_under_gradient_flow}
Fix $c\in\R$ and assume that there exist $\eps, \delta>0$ such that $\|\nabla I(u)\|_{H^1(B)}\geq\delta$ for every $u\in \cC_*$ with
$|I(u)-c|\leq2\eps$. Then there exists $\eta:\cone_*\to\cone_*$ continuous with respect to the $H^1$-topology which satisfies the following properties
\begin{itemize}
\item[(i)] $I(\eta(u))\leq I(u)$ for every $u\in\cC_*$;
\item[(ii)] $I(\eta(u))\leq c-\eps$ if $|I(u)-c|<\eps$;
\item[(iii)] $\eta(u)=u$ if $|I(u)-c|>2\eps$.
\end{itemize}
\end{lemma}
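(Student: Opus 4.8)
The plan is to construct $\eta$ as a suitable truncation of the time-$1$ map of the (negative) gradient flow of $I$, restricted so that it preserves $\cC_*$. First I would recall that, since $\tilde f$ has subcritical growth by \eqref{eq:superlinear-subcritical}, the functional $I$ is of class $C^2$ on $H^1_{rad}(B)$ with locally Lipschitz gradient $\nabla I$, so the Cauchy problem
\[
\frac{d}{dt}\sigma(t,u) = -\nabla I(\sigma(t,u)), \qquad \sigma(0,u)=u
\]
defines a (local, hence by the Palais--Smale condition and the a priori bound global) flow on $H^1_{rad}(B)$. To localize near the level $c$ one introduces a cut-off $\chi:H^1_{rad}(B)\to[0,1]$, Lipschitz, with $\chi\equiv 1$ on $\{|I-c|\le\eps\}$ and $\chi\equiv 0$ on $\{|I-c|\ge 2\eps\}$, and considers the modified flow $\dot\sigma = -\chi(\sigma)\,\nabla I(\sigma)$. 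The standard deformation lemma argument then shows that, using the quantitative bound $\|\nabla I(u)\|\ge\delta$ on the relevant slab and choosing the flow time appropriately (e.g.\ $T=4\eps/\delta^2$, rescaling the cut-off flow so the map is $\eta(u):=\sigma(T,u)$ up to reparametrization), properties (i)--(iii) hold: (i) because $\frac{d}{dt}I(\sigma(t,u))=-\chi(\sigma)\|\nabla I(\sigma)\|^2\le 0$; (iii) because the flow is stationary where $\chi=0$; and (ii) by the usual energy-decrease estimate along orbits that traverse the slab $\{|I-c|\le 2\eps\}$.

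The genuinely new point — and the main obstacle — is that $\eta$ must map $\cC_*$ into itself, i.e.\ $\cC_*$ must be positively invariant under this flow, since $\cC_*$ has empty interior in $H^1$ and is not open. The key step is therefore to prove: if $u\in\cC_*$ then $\sigma(t,u)\in\cC_*$ for all $t\ge 0$. I would establish this by the standard invariance criterion for closed convex sets under gradient-type flows: it suffices to show that the vector field $-\nabla I(u)$ points ``inward'' at every boundary point of $\cC_*$, or more robustly, that $d(\sigma(t,u),\cC_*)$ satisfies a Gronwall inequality forcing it to stay zero. Concretely, let $P:H^1_{rad}(B)\to\cC_*$ be the metric projection onto the closed convex set $\cC_*$; one shows
\[
\frac{d}{dt}\,\tfrac12\|\sigma - P\sigma\|_{H^1}^2 = \langle \dot\sigma,\ \sigma-P\sigma\rangle = -\chi(\sigma)\,\langle \nabla I(\sigma),\ \sigma-P\sigma\rangle \le C\,\|\sigma-P\sigma\|_{H^1}^2,
\]
where the last inequality comes from the pseudo-monotonicity-type estimate $\langle \nabla I(v) - \nabla I(Pv),\, v - Pv\rangle \ge -C\|v-Pv\|^2$ (true since $-\Delta+\mathrm{Id}$ contributes a nonnegative term $\|v-Pv\|^2$ and $\tilde f$ is $C^1$ hence locally Lipschitz on the bounded sets involved) together with the variational inequality $\langle \nabla I(Pv),\, v-Pv\rangle \le 0$ characterizing $P$. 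The latter uses crucially that $\nabla I(w)$ for $w\in\partial\cC_*$ points into $\cC_*$: this is exactly where one needs that $u_-$ and $u_+$ are fixed points of $\tilde f$ (so that on the set $\{w=u_\pm\}$ the term $-\Delta w + w - \tilde f(w)$ has the right sign after testing against $w - Pw$) and that $\cC$ itself is flow-invariant, which in turn follows from Lemma~\ref{lemma:T_is_cone_map} (the resolvent $[-\Delta+\mathrm{Id}]^{-1}$ maps $\cone$ into $\cone$) applied along the implicit Euler scheme approximating the flow.

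Finally, I would note two auxiliary points to record in the write-up. First, one must check that $\cC_*$ contains no critical points at level $c$ forcing the flow to stall prematurely — but that is precisely the hypothesis $\|\nabla I(u)\|\ge\delta$ on the slab, so the argument is uniform. Second, the continuity of $\eta$ in the $H^1$-topology is inherited from the continuous dependence of solutions of the modified ODE on initial data, which holds because the cut-off vector field $\chi(\cdot)\nabla I(\cdot)$ is globally Lipschitz on the $H^1$-bounded region swept out by orbits starting in $\cC_*$ (using the Palais--Smale compactness and the subcritical bound to get such a bound). Assembling these, one sets $\eta := \sigma(T,\cdot)|_{\cC_*}$ with $T$ chosen as above, and (i)--(iii) follow. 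The heart of the matter — and the part requiring genuine care rather than citation — is the convex-set invariance $\sigma(t,\cC_*)\subseteq\cC_*$, which combines the cone invariance of the resolvent with the fixed-point property of $u_\pm$.
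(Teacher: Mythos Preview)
Your overall plan is sound and parallels the paper's: build a cut-off (negative) gradient flow, verify (i)--(iii) by the standard deformation argument, and isolate the invariance $\sigma(t,\cC_*)\subset\cC_*$ as the real content. The paper handles the invariance differently and more directly. It first proves, as a separate step, that $T(\cC_*)\subset\cC_*$ for the map $T(w)=[-\Delta+\mathrm{Id}]^{-1}\tilde f(w)$, combining Lemma~\ref{lemma:T_is_cone_map} (for membership in $\cC$) with a short maximum-principle argument using $\tilde f(u_\pm)=u_\pm$ to obtain the pointwise bounds $u_-\le Tw\le u_+$. Then, since $\nabla I(u)=u-T(u)$, each segment of the \emph{explicit} Euler polygonal for the (normalized) flow has the form $(1-\theta)u+\theta\,T(u)$ with $\theta\in[0,1)$, hence lies in $\cC_*$ by convexity; closedness of $\cC_*$ passes invariance to the limiting flow line.

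Your Gronwall/projection route is a legitimate alternative, but the execution has concrete errors. First, a sign: for the differential inequality $\frac{d}{dt}\tfrac12\|\sigma-P\sigma\|^2\le C\|\sigma-P\sigma\|^2$ to close you need $\langle\nabla I(P\sigma),\,\sigma-P\sigma\rangle\ge 0$, not $\le 0$ (equivalently, it is $-\nabla I$ that must point into $\cC_*$, not $\nabla I$). Second, this inequality is \emph{not} ``the variational inequality characterizing $P$''; the projection inequality is $\langle v-Pv,\,w-Pv\rangle\le 0$ for all $w\in\cC_*$, and says nothing about $\nabla I$. The correct derivation is: since $\nabla I(P\sigma)=P\sigma-T(P\sigma)$ and, crucially, $T(P\sigma)\in\cC_*$, applying the projection inequality with $w=T(P\sigma)$ gives $\langle\sigma-P\sigma,\,T(P\sigma)-P\sigma\rangle\le 0$, i.e.\ $\langle\nabla I(P\sigma),\,\sigma-P\sigma\rangle\ge 0$. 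Thus your argument, like the paper's, hinges entirely on the resolvent invariance $T(\cC_*)\subset\cC_*$, which you should state and prove cleanly as its own step rather than bury in remarks about ``implicit Euler'' (implicit Euler is not needed and does not give the claim directly). Once these points are fixed, your route and the paper's are simply two standard ways to deduce flow invariance from the same resolvent invariance; the paper's explicit-Euler argument is shorter, yours is more abstract but equally valid.
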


\begin{proof}
We first show that the operator $T$ defined in
(\ref{eq:definition_of_T}) -- with $a(r) \equiv 1$, $b(r) \equiv 0$
and $\tilde f$ in place of $f$ 
-- satisfies 
\begin{equation}
  \label{eq:8}
T(\cC_*) \subset \cC_*.  
\end{equation}
Let $w \in \cC_*$ and denote by $v \in H^1(B)$ the unique solution of  
\[
\left\{\begin{array}{ll}
        -\Delta v+v=\tilde f(w) \quad &\text{ in } B \\
	\partial_\nu v=0 &\text{ on } \partial B,
       \end{array}
\right.
\]
Then $v \in \cC$ by Lemma~\ref{lemma:T_is_cone_map}, so we only have
to prove that $u_- \le v \le u_+$ a.e. in $B$. Note that $h= v-u_-$ satisfies  
$$
-\Delta h+h=\tilde f(w)-u_- \ge 0 \quad \text{in $B$}\qquad
\text{and}\qquad \partial_\nu h= 0 \quad \text{on $\partial B$.}
$$
Here we used the fact that $\tilde f$ is nondecreasing and $\tilde f(u_-)=u_-$. 
Multiplying this equation with $h^-$ and integrating by parts, we
 obtain $\|h^-\|_{H^1}^2 \le 0$ and therefore $h^- \equiv 0$, i.e. $v
\ge u_-$ a.e. in $B$. Very similarly, if $u_+< \infty$, we show that $v \le
u_+$ a.e. in $B$. Hence we conclude that $v \in \cC_*$ and
(\ref{eq:8}) follows.

\medbreak 

Next, we take a smooth cut-off function $\chi:\R\to[0,1]$ such that $\chi(s)=1$ if $|s-c|<\eps$ and $\chi(s)=0$ if $|s-c|>2\eps$. For $u\in H^1(B)$
consider the following Cauchy problem
\begin{equation}
\label{eq:cauchy_problem_for_the_flux}
\left\{ \begin{array}{ll}
         \frac{d}{dt} \eta(t,u)=-\chi(I(\eta(t,u))) \frac{\nabla I(\eta(t,u))}{\|\nabla I(\eta(t,u)) \|_{H^1(B)}} \quad & t>0\\
	 \partial_\nu \eta(t,u)(x)=0 & t>0, \  x\in\partial B \\
	 \eta(0,u)=u.
        \end{array}
\right.
\end{equation}
Since $I\in C^2(H^1(B), \R)$, the normalized gradient vector field appearing in \eqref{eq:cauchy_problem_for_the_flux} is locally Lipschitz
continuous and globally bounded, hence there exists of a unique solution $\eta(\cdot,u)\in C^1([0,+\infty), H^1(B))$. We set
\begin{equation}
\label{deformation}  
\eta(u):=\eta\left(\frac{2\eps}{\delta},u\right).
\end{equation}
Properties (i), (ii) and (iii) are standard, so it remains to prove
that $\eta(\cC_*) \subset \cC_*$. To this aim we consider the approximation of
the flow line $t \mapsto \eta(t,u)$ given by the Euler polygonal. 
The first segment of the polygonal is given by the
expression
\[
\bar \eta(t,u)=u-\frac{t}{\lambda}\nabla I(u)= u-\frac{t}{\lambda}(u-T(u)), \qquad t\in(0,1),
\]
where $\lambda=\frac{\chi(I(\eta(t,u)))}{\|\nabla I(\eta(t,u)) \|_{H^1(B)}}$ and $T$ is the operator defined in \eqref{eq:definition_of_T} (with
$a(r)=1$). By writing
\[
\bar \eta(t,u)=\left(1-\frac{t}{\lambda}\right) u + \frac{t}{\lambda} T(u), \qquad t\in(0,1),
\]
we see that it is contained in $\cC_*$ by (\ref{eq:8})
 and the convexity 
$\cC_*$. Finally, since the vector field in 
\eqref{eq:cauchy_problem_for_the_flux} is locally Lipschitz, the Euler
polygonals are known to converge in $H^1(B)$ to the flow line $t
\mapsto \eta(t,u)$, which therefore
must be contained in $\cC_*$.
\end{proof}

\begin{lemma}
\label{sec:case-multiple-fixed-2}
Let $\tau>0$ be such that $\tau <\min \{u_0-u_-,u_+-u_0\}$. 
Then there exists $\alpha>0$ such that 
\begin{itemize}
\item[(i)] $I(u)\ge I(u_-)+\alpha$ for every $u \in
\cC_*$ with $\|u-u_-\|_{L^\infty(B)}=\tau$.\\
\item[(ii)] if $u_+< \infty$, then $I(u)\ge I(u_+)+\alpha$ for every $u \in
\cC_*$ with $\|u-u_+\|_{L^\infty(B)}= \tau$.
\end{itemize}

\end{lemma}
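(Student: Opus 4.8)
The plan is to show that $u_-$ and $u_+$ are strict local minima of the restriction $I|_{\cC_*}$ and to produce a quantitative lower bound on the prescribed $L^\infty$-spheres. For (i), I would write any $u\in\cC_*$ with $\|u-u_-\|_{L^\infty(B)}=\tau$ as $u=u_-+v$, where $v:=u-u_-\in\cC$ is nonnegative and nondecreasing, so that $\|v\|_{L^\infty(B)}=v(1)=\tau$. Expanding the quadratic part and writing the potential difference as an integral of $\tilde f$ yields the exact identity
\[
I(u)-I(u_-)=\frac12\int_B|\nabla v|^2\,dx+\int_B\phi(v)\,dx,\qquad
\phi(t):=u_-t+\frac{t^2}{2}-\bigl(\tilde F(u_-+t)-\tilde F(u_-)\bigr),
\]
so that $\phi(0)=0$ and $\phi'(t)=u_-+t-\tilde f(u_-+t)$. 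The structural input is that $\tilde f(s)<s$ for every $s\in(u_-,u_0)$: indeed $u_-$ and $u_0$ are consecutive fixed points of $\tilde f$, and $\tilde f(s)-s<0$ just below $u_0$ because $\tilde f'(u_0)>\lambda_2^{rad}>1$. Since $\tau<u_0-u_-$, this gives $\phi'>0$ on $(0,\tau]$, hence $\phi\ge0$ on $[0,\tau]$ with $\phi(t)>0$ for $t\in(0,\tau]$; in particular $m:=\phi(\tau/2)>0$, and $\phi\ge m$ on $[\tau/2,\tau]$ since $\phi$ is nondecreasing.

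Next I would distinguish two cases according to where the value $\|v\|_{L^\infty(B)}=\tau$ is realised. If $v(1/2)\ge\tau/2$, then $v\ge\tau/2$ on $B\setminus B_{1/2}$, so $\int_B\phi(v)\,dx\ge m\,|B\setminus B_{1/2}|$, and (i) holds with this positive constant. If instead $v(1/2)<\tau/2$, then $v(1)-v(1/2)>\tau/2$; since on the annulus $\tfrac12\le|x|\le1$ the radial weight $r^{N-1}$ is bounded below by $2^{1-N}$, the Cauchy–Schwarz inequality applied to $\int_{1/2}^1 v'(r)\,dr$ gives $\int_B|\nabla v|^2\,dx\ge c_N\bigl(v(1)-v(1/2)\bigr)^2> c_N\tau^2/4$ with a dimensional constant $c_N>0$; since $\phi\ge0$, (i) again follows. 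Choosing $\alpha$ as the minimum of the two constants produced yields a single value valid on the whole sphere.

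For (ii) the scheme is the same, carried out from above: write $u=u_+-w$ with $w:=u_+-u\ge0$ radial and \emph{nonincreasing}; on the sphere $\|w\|_{L^\infty(B)}=w(0)=\tau$, so in fact $0\le w\le\tau$ throughout $B$. One obtains $I(u)-I(u_+)=\tfrac12\int_B|\nabla w|^2\,dx+\int_B\psi(w)\,dx$ with $\psi(t):=\tfrac{t^2}{2}-u_+t+\tilde F(u_+)-\tilde F(u_+-t)$, $\psi(0)=0$, $\psi'(t)=\tilde f(u_+-t)-(u_+-t)$, and the relevant structural fact is now $\tilde f(s)>s$ on $(u_0,u_+)$ — again by $\tilde f'(u_0)>1$ and $u_0,u_+$ being consecutive fixed points, together with $\tau<u_+-u_0$ — which yields $\psi\ge0$ on $[0,\tau]$ and $\psi(t)>0$ for $t\in(0,\tau]$. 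The hard part will be the ``gradient--dominated'' case, which does not have a direct analogue of the argument in (i): for a nonincreasing $w$ the transition from $\tau$ to small values takes place near the origin $r=0$, precisely where the radial weight $r^{N-1}$ degenerates, so that a large $L^\infty$-norm of $w$ no longer forces $\int_B|\nabla w|^2$ to be bounded below (this is also why Lemma~\ref{lemma:embedding_of_cone_in_L_infty} cannot be invoked here). Closing (ii) therefore requires a finer argument reconciling the two regimes — $w$ spread out versus $w$ concentrated near the origin — and exploiting the constraint $0\le w\le\tau$ (so that $\psi(t)\ge c\,t^q$ for $t\in[0,\tau]$ with a suitable exponent $q$) together with the monotonicity of $w$; this is the delicate step of the proof.
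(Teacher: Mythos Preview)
Your treatment of part~(i) is correct and actually more transparent than the paper's. The paper argues by contradiction: assuming a sequence $w_n\in\cC$ with $w_n(1)=\tau$ and $I(u_-+w_n)-I(u_-)\to 0$, the identity you wrote forces $\|\nabla w_n\|_{L^2}\to 0$; continuity of the trace at $r=1$ then identifies the weak $H^1$-limit as the constant $\tau$, and Lemma~\ref{sec:cone-nonn-nond} is invoked to pass the limit into the potential term, contradicting $s-\tilde f(s)>0$ on $(u_-,u_0)$. Your two-case split on the outer annulus (either $\phi(v)$ is uniformly bounded below there, or $\int_B|\nabla v|^2$ is) produces an explicit $\alpha$ and bypasses the compactness argument entirely.

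Your hesitation about part~(ii) is fully justified; in fact the obstacle you isolate is fatal, and no ``finer argument'' will close it, because the statement is \emph{false} as written for every $N\ge 2$. For $N\ge 3$ take $w_n$ radial, equal to $\tau$ on $B_{1/n}$, affine in $r$ on $B_{2/n}\setminus B_{1/n}$, and zero outside; for $N=2$ take $w_n(r)=\tau\min\bigl\{1,(-\log r)/n\bigr\}$. In either case $u_n:=u_+-w_n\in\cC_*$ with $\|u_n-u_+\|_{L^\infty(B)}=\tau$, yet
\[
I(u_n)-I(u_+)=\tfrac12\int_B|\nabla w_n|^2\,dx+\int_B\psi(w_n)\,dx\longrightarrow 0,
\]
since both terms tend to zero separately: the radial weight $r^{N-1}$ absorbs the gradient concentration at the origin, while $w_n\to 0$ a.e.\ and $0\le\psi(w_n)\le\max_{[0,\tau]}\psi$ give the potential term by dominated convergence. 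So no uniform $\alpha>0$ can exist.

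The paper's ``In a similar way'' therefore conceals a genuine gap. The crucial step in the contradiction argument for (i) --- deducing from $\|\nabla w_n\|_{L^2}\to 0$ that $w_n\to\tau$ in $H^1$ --- uses the trace at $r=1$, and there is simply no analogue at $r=0$. The repair is not to prove (ii) as stated but to reformulate it: one should measure proximity to $u_+$ by a quantity that is continuous on $H^1_{rad}(B)$ and insensitive to concentration at the origin (for instance an $L^2$- or $H^1$-distance, or the boundary value $u(1)$), and redefine $U_+$ accordingly before running the mountain-pass argument of Proposition~\ref{proposition:mountain_pass}.
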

\begin{proof}
Suppose by contradiction that there exists a sequence $(w_n)_n 
\subset \cC$ of increasing nonnegative functions such that
$\|w_n\|_{L^\infty(B)}=w_n(1)=\tau$ for all $n$ and $\limsup \limits_{n \to \infty}
\bigl[I(u_-+w_n)-I(u_-)\bigr] \le 0$. Since
\begin{align*}
I(u_-+w_n)-I(u_-) &= \frac{1}{2}\int_B\left(|\nabla w_n|^2+ |w_n|^2 +2u_- w_n\right)\,dx 
 - \int_B \Bigl(\tilde F(u_- + w_n)-\tilde F(u_-))\,dx\\
&= \frac{1}{2}\int_B|\nabla w_n|^2\,dx + 
\int_B \int_0^1 \Bigl(u_-+t w_n -  \tilde f(u_-+t w_n)\Bigr)w_n\,dt dx 
\end{align*} 
and 
\begin{equation}
  \label{eq:u_0-u_-}
s-\tilde f(s)>0 \qquad \text{for $s \in (u_-,u_0)$,}  
\end{equation}
we then conclude that $\|\nabla w_n\|_{L^2}(B) \to 0$ as $n \to \infty$. Hence the sequence
$w_n$ converges to the constant solution $w \equiv \tau$ in the
$H^1$-norm. By Lemma~\ref{sec:cone-nonn-nond} we therefore
conclude that 
\begin{align*}
0 \ge \lim_{n\to \infty} \bigl[I(u_-+w_n)-I(u_-)\bigr] &= \lim_{n \to
  \infty} \int_B \int_0^1 \Bigl(u_-+t w_n -  \tilde f(u_-+t
w_n)\Bigr)w_n\,dx\\
&=  \int_B \int_0^1 \Bigl(u_-+t \tau -  \tilde f(u_-+t
\tau)\Bigr)\tau\,dt dx.
\end{align*}
This however contradicts (\ref{eq:u_0-u_-}). Hence there exists $\alpha_1>0$ such that (i) holds. 

\medbreak

In a similar way, now using the fact that $s-\tilde f(s)<0$ for $s \in (u_0,u_+)$, we find $\alpha_{2}>0$
such that (ii) holds if $u_+< \infty$. The claim then follows with $\alpha:= \min \{\alpha_1,\alpha_2\}$. 
\end{proof}

In the following, we first consider the case 
$$
u_+< \infty.
$$
Moreover,
we fix $\tau$ and $\alpha$ as in
Lemma~\ref{sec:case-multiple-fixed-2}, and we define 
\begin{equation}
  \label{eq:2}
U_\pm := \{u \in \cC_* \::\: I(u)<I(u_\pm)+\frac{\alpha}{2},\:
\|u-u_\pm \|_{L^\infty(B)} < \tau\}.
\end{equation}
Then we have:

\begin{proposition}\label{proposition:mountain_pass}
Let
\[
\Gamma=\left\{ \gamma\in C([0,1],\cC_*)\ :\  \gamma(0) \in U_-,\:
  \gamma(1) \in U_+\right\}
\]
and
\[
c=\inf_{\gamma\in\Gamma}\max_{t\in[0,1]} I(\gamma(t)).
\]
Then $c \ge \max \{I(u_-),I(u_+)\}+\alpha$ and $c$ is a critical level for $I$. More precisely, there exists a
critical point $u \in \cC_*$ of $I$ with $I(u)=c$.
\end{proposition}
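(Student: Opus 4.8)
The plan is to run a standard mountain pass argument, but carried out entirely inside the convex set $\cC_*$, using the flow constructed in Lemma~\ref{lemma:cone_invariant_under_gradient_flow} in place of the usual pseudo-gradient deformation. First I would verify that the class $\Gamma$ is nonempty: since $u_- \in U_-$ and $u_+ \in U_+$ and $\cC_*$ is convex, the segment $\gamma_0(t) = (1-t)u_- + t u_+$ lies in $\cC_*$ and joins the two sets, so $\Gamma \neq \emptyset$ and $c < \infty$. Next I would establish the lower bound $c \ge \max\{I(u_-),I(u_+)\} + \alpha$. For this, take any $\gamma \in \Gamma$. By Lemma~\ref{sec:case-multiple-fixed-2}, along $\gamma$ the $L^\infty$-distance to $u_-$ starts below $\tau$ and must reach $\tau$ before (or when) it can reach the neighborhood of $u_+$ (here one uses $\tau < \min\{u_0 - u_-, u_+ - u_0\}$, so the sets $\{\|u-u_-\|_\infty < \tau\}$ and $\{\|u-u_+\|_\infty < \tau\}$ are disjoint and $\gamma$ is continuous into $\cC_* \subset L^\infty(B)$ by Lemma~\ref{lemma:embedding_of_cone_in_L_infty} together with the weak-strong continuity of Lemma~\ref{sec:cone-nonn-nond}, or more simply by noting that $\gamma$ is continuous into $H^1$ and the intermediate-value argument can be applied to the continuous real function $t \mapsto \|\gamma(t)-u_-\|_{L^\infty}$ — I will need to check this function is continuous, which follows since $\gamma([0,1])$ is a compact subset of $\cC$ on which the $L^\infty$ and $H^1$ topologies agree). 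Thus there is $t^* \in (0,1)$ with $\|\gamma(t^*)-u_-\|_\infty = \tau$, whence $\max_t I(\gamma(t)) \ge I(\gamma(t^*)) \ge I(u_-) + \alpha$; symmetrically for $u_+$. Taking the infimum over $\gamma$ gives the claim, and in particular $c$ is a genuine mountain pass level strictly above the endpoint values.

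The heart of the argument is the deformation step showing $c$ is a critical value. Suppose for contradiction that $c$ is a regular value, i.e. $\nabla I(u) \neq 0$ for all $u \in \cC_*$ with $I(u) = c$. Since $I$ satisfies Palais--Smale (Lemma~\ref{lemma:palais_smale}), a standard argument produces $\eps, \delta > 0$ with $\|\nabla I(u)\|_{H^1(B)} \ge \delta$ for all $u \in \cC_*$ with $|I(u) - c| \le 2\eps$; moreover I would shrink $\eps$ so that $2\eps < \alpha/2$, which guarantees that the cut-off deformation leaves $U_\pm$ pointwise fixed (since $I < I(u_\pm) + \alpha/2 \le c - \alpha/2 < c - 2\eps$ there, using $c \ge I(u_\pm) + \alpha$). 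Lemma~\ref{lemma:cone_invariant_under_gradient_flow} then furnishes a continuous $\eta : \cC_* \to \cC_*$ with properties (i)--(iii). Pick $\gamma \in \Gamma$ with $\max_t I(\gamma(t)) < c + \eps$, and set $\tilde\gamma := \eta \circ \gamma$. By property (iii) and the choice of $\eps$, $\eta$ fixes $\gamma(0) \in U_-$ and $\gamma(1) \in U_+$, so $\tilde\gamma \in \Gamma$; and by properties (i)--(ii), $\max_t I(\tilde\gamma(t)) \le c - \eps$, contradicting the definition of $c$. Hence there exists $u \in \cC_*$ with $I(u) = c$ and $\nabla I(u) = 0$; since critical points of $I$ are radial solutions of \eqref{eq:main_equation_a=1} (as noted after the definition of $I$), and a point with $\nabla I(u) = 0$ in $\cC_*$ is a free critical point because $\nabla I(u) = u - T(u)$ and the obstacle $\cC_*$ plays no role once the gradient vanishes, $u$ is the desired critical point.

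I would then dispatch the remaining case $u_+ = \infty$ separately with a one-sided version: here $\cC_*$ is only bounded below (by $u_-$), $U_-$ is defined as before, but the "top" of the mountain pass is obtained not from a neighborhood of $u_+$ but from the superlinear growth of $\tilde f$ encoded in \eqref{eq:superlinear-subcritical} — namely, along the ray $t \mapsto u_- + t$ for large constant $t$ (which lies in $\cC_*$ since $u_+ = \infty$), the functional $I$ tends to $-\infty$ because $\tilde F$ grows like $s^{p+1}/(p+1)$. So I would replace $U_+$ by a single point $e = u_- + t_0$ with $t_0$ large enough that $I(e) < I(u_-)$, take $\Gamma = \{\gamma \in C([0,1],\cC_*) : \gamma(0) \in U_-, \gamma(1) = e\}$, and rerun the same deformation argument; the lower bound $c \ge I(u_-) + \alpha$ still comes from Lemma~\ref{sec:case-multiple-fixed-2}(i).

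The main obstacle I anticipate is the bookkeeping in the lower-bound step: one must be careful that a path in $\cC_*$ joining $U_-$ to $U_+$ genuinely crosses the sphere $\{\|u - u_-\|_\infty = \tau\}$ inside $\cC_*$, and this rests on the fact that on the compact image $\gamma([0,1]) \subset \cC$ the $H^1$- and $L^\infty$-topologies coincide (a consequence of Lemma~\ref{lemma:embedding_of_cone_in_L_infty} plus the weak-strong continuity in Lemma~\ref{sec:cone-nonn-nond}, which upgrades $H^1$-convergence in the cone to $L^p$- and, via the uniform $L^\infty$ bound and interpolation, to $L^\infty$-convergence; alternatively one argues directly that $u \mapsto u(1) = \|u\|_{L^\infty}$ is $H^1$-continuous on $\cC$). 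Once this continuity is in hand, the intermediate value theorem applies and the rest is the routine mountain-pass-with-obstacle machinery, which goes through because Lemma~\ref{lemma:cone_invariant_under_gradient_flow} has already done the hard work of producing a deformation that respects $\cC_*$.
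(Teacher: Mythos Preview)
Your overall strategy coincides with the paper's: $\Gamma\neq\emptyset$ via the segment of constant functions, the lower bound from Lemma~\ref{sec:case-multiple-fixed-2}, and then the standard deformation contradiction using Lemma~\ref{lemma:cone_invariant_under_gradient_flow} and Palais--Smale, with $4\eps<\alpha$ so that $\eta$ fixes the endpoints. The deformation step matches the paper's proof essentially verbatim.

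The gap is in your justification of the lower bound, precisely at the point you yourself flag as the ``main obstacle''. Your claim that on the $H^1$-compact image $\gamma([0,1])\subset\cC$ the $H^1$ and $L^\infty$ topologies agree is false: the path $t\mapsto|x|^t$ for $t\in(0,\tfrac12]$, extended by the constant $1$ at $t=0$, lies in $\cC$, is $H^1$-continuous (a direct computation in the spirit of Remark~\ref{sec:cone-nonn-nond-1}), yet $\|\gamma(t)-\gamma(0)\|_{L^\infty}=1$ for every $t>0$. For the same reason, Lemma~\ref{sec:cone-nonn-nond} together with a uniform $L^\infty$ bound only yields $L^p$-convergence for finite $p$, not $L^\infty$, so that route also fails. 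Your alternative --- that $u\mapsto u(1)$ is $H^1$-continuous on $\cC$ (e.g.\ via the trace on $\partial B$) --- \emph{is} correct and does handle the $u_-$ side, since $\|u-u_-\|_{L^\infty}=u(1)-u_-$ for $u\in\cC_*$; the intermediate value theorem applied to $t\mapsto\gamma(t)(1)$ then gives a crossing of the $\tau$-sphere around $u_-$ and hence $c\ge I(u_-)+\alpha$. However, ``symmetrically for $u_+$'' does \emph{not} follow by the same device: for $u\in\cC_*$ one has $\|u-u_+\|_{L^\infty}=u_+-u(0)$, and the same example shows that $u\mapsto u(0)$ is only upper semicontinuous on $\cC$, so the intermediate value argument on the $u_+$ side requires a different idea. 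The paper is equally terse here (it simply asserts the bound ``follows immediately''), so your proposal does not diverge from the paper's approach --- but the specific topological fact you invoke is incorrect and should be replaced.

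Your discussion of the case $u_+=\infty$ is not part of this proposition; the paper treats it separately in Proposition~\ref{proposition:mountain_pass-1}, along the lines you sketch.
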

\begin{proof}
It follows immediately from Lemma~\ref{sec:case-multiple-fixed-2} that 
$c> \max \{I(u_-),I(u_+)\}+\alpha$. Moreover, $\Gamma$ is nonempty,
since the path of constant functions 
\begin{equation}
  \label{eq:constant-path}
t \mapsto (1-t)u_-+t u_+  
\end{equation}
is contained in $\Gamma$. Consequently, $c< \infty$. Assume by contradiction that there
does not exist a critical point $u \in \cC_*$ of $I$ with $I(u)=c$. By Lemma
\ref{lemma:palais_smale}, this implies the existence of $\eps,\delta>0$ such that
$\|\nabla I(u)\|_{H^1(B)}\geq\delta$ for all $u \in \cC_*$ satisfying
$|I(u)-c|\leq2\eps$. Without loss of generality, we may assume that
$4\eps< \alpha$. Correspondingly, let $\eta$ be the deformation defined in Lemma
\ref{lemma:cone_invariant_under_gradient_flow}, and let
$\gamma\in\Gamma$ be such that $\max_{t\in[0,1]} I(\gamma(t)) \leq c+\eps.$

Defining $\bar \gamma: [0,1] \to \cC_*$ by $\bar
\gamma(t)=\eta(\gamma(t))$, we then have 
$\bar\gamma(0)=\gamma(0)$ and $\bar\gamma(1)=\gamma(1)$ 
because of Lemma~\ref{lemma:cone_invariant_under_gradient_flow} (iii) and the fact that
$I(u_\pm) < c- 2\eps$ by our choice of $\eps$ and $\alpha$. Hence $\bar \gamma \in \Gamma$.
However, by Lemma~\ref{lemma:cone_invariant_under_gradient_flow} (i)
and (ii) we have 
\[
\max_{t\in[0,1]} I(\bar \gamma(t)) \leq c-\eps,
\]
contradicting the definition of $c$. The claim then follows.
\end{proof}

In order to show that the critical value $c$ in Proposition
\ref{proposition:mountain_pass} does not yield a constant solution of
(\ref{eq:main_equation_a=1}), it suffices to show that $c<I(u_0)$. To
show this, we will now make use of the assumption
$\tilde f'(u_0)>\lambda_2^{rad}$. The strategy is to find a curve $\gamma\in\Gamma$ such that
$\max_{t\in[0,1]} I(\gamma(t))<I(u_0)$. This is achieved by suitably perturbing
the constant path defined in (\ref{eq:constant-path}) around $u_0$,
moving in the direction of the eigenfunction associated
to $\lambda_2^{rad}$. We will need a series of lemmas. Let us start with some simple properties of the eigenfunction associated to
$\lambda_2^{rad}$.

\begin{lemma}\label{lemma:properties_of_v}
Let $v$ be an eigenfunction associated to $\lambda_2^{rad}$, that is
\[
\left\{\begin{array}{ll}
       -\Delta v +v =\lambda_2^{rad} v \quad &\text{in } B \\
       \partial_\nu v=0 &\text{on } \partial B \\
       v \text{ radial}.
       \end{array}
\right.
\]
Then $v$ is unique up to a multiplicative factor and we can chose it increasing. Moreover, $\int_B v\, dx=0$.
\end{lemma}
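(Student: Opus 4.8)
The plan is to analyze the radial eigenvalue problem for $-\Delta + 1$ with Neumann boundary conditions by reducing it to a Sturm--Liouville problem on the interval $(0,1)$. Writing $v = v(r)$ with $r = |x|$, the equation $-\Delta v + v = \lambda_2^{rad} v$ becomes
\[
-(r^{N-1}v')' = (\lambda_2^{rad}-1)\, r^{N-1} v \qquad \text{in } (0,1),
\]
with the boundary conditions $v'(0) = 0$ (regularity at the origin) and $v'(1) = 0$ (Neumann condition). This is a regular singular Sturm--Liouville problem, whose eigenfunctions form an orthogonal basis of $L^2((0,1), r^{N-1}\,dr)$, the $k$-th eigenfunction having exactly $k-1$ interior zeros. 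The constant function is the eigenfunction for $\lambda_1^{rad} = 1$, so the eigenfunction $v$ associated with the second eigenvalue $\lambda_2^{rad}$ has exactly one interior zero in $(0,1)$ and no other zeros.

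Uniqueness up to a multiplicative constant is the standard fact that each Sturm--Liouville eigenvalue is simple; alternatively it follows directly because any two solutions of the second-order ODE satisfying $v'(0)=0$ are proportional (the initial value problem with $v(0)$ prescribed and $v'(0)=0$ has a unique solution, up to the scaling of $v(0)$). The mean-zero property $\int_B v\,dx = 0$ is immediate from orthogonality to the first eigenfunction (the constant $\equiv 1$) in $L^2(B)$, since distinct eigenfunctions of a self-adjoint operator are orthogonal; equivalently, integrate the equation $-\Delta v + v = \lambda_2^{rad} v$ over $B$ and use $\int_B \Delta v = \int_{\partial B}\partial_\nu v = 0$ to get $(\lambda_2^{rad}-1)\int_B v = 0$, whence $\int_B v = 0$ because $\lambda_2^{rad} \ne 1$.

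It remains to justify that $v$ can be chosen increasing, i.e. $v' \ge 0$ on $(0,1)$. Let $r_0 \in (0,1)$ be the unique zero of $v$; after replacing $v$ by $-v$ if necessary, assume $v < 0$ on $(0,r_0)$ and $v > 0$ on $(r_0,1)$. Set $z := v'$, which vanishes at $0$ and $1$. From the equation in the form $(r^{N-1}z)' = (1-\lambda_2^{rad})\,r^{N-1} v$ with $\lambda_2^{rad}>1$, the right-hand side has the sign of $-v$: it is positive on $(0,r_0)$ and negative on $(r_0,1)$. Hence $r \mapsto r^{N-1}z(r)$ is strictly increasing on $(0,r_0)$ and strictly decreasing on $(r_0,1)$; since it vanishes as $r \to 0^+$ and at $r=1$, it is strictly positive on all of $(0,1)$, and therefore $z = v' > 0$ there. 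This is the only part requiring a small argument, so it is the natural candidate for the main obstacle, but as shown it reduces to a one-line monotonicity observation on $r^{N-1}v'$ once the sign pattern of $v$ (i.e. the number of its interior zeros) has been pinned down from the Sturm--Liouville theory.
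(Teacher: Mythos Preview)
Your proof is correct and follows the same route as the paper: reduce to a Sturm--Liouville problem in radial coordinates for simplicity and the zero count, and integrate the equation over $B$ (or invoke orthogonality to constants) for the mean-zero property. The paper simply asserts monotonicity of $v$ from Sturm--Liouville theory and a reference to the explicit Bessel-type form of the eigenfunctions, whereas you supply the direct ODE argument showing $r^{N-1}v'$ is increasing then decreasing and vanishes at the endpoints, hence positive on $(0,1)$; this makes your argument more self-contained on that point.
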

\begin{proof}
By writing the equation for $v$ in radial coordinates we see that it satisfies a Sturm-Liouville problem. Hence $v$ is unique up to a
multiplicative factor, it is monotone and has exactly one zero. By taking $-v$ if necessary, we can assume it is increasing. We refer to \cite{BonheureGrumiau} for the explicit form of the eigenfunctions. By integrating the equation for $v$ we deduce $(\lambda_2^{rad}-1)\int_B v \, dx=0$, and therefore $\int_B v\, dx=0$.
\end{proof}
In the following $v$ will always denote a positive eigenfunctions associated to $\lambda_2^{rad}$.
\begin{lemma}
\label{sec:exist-solut-via-2}
Consider the function 
$$
\psi: \R^2 \to \R,\qquad \psi(s,t)=I'(t(u_0+sv))(u_0+sv).
$$
There exists $\eps_1,\eps_2>0$ and a
$C^1$-function $g:(-\eps_1,\eps_1) \to (1-\eps_2,1+\eps_2)$
such that for $(s,t) \in U := (-\eps_1,\eps_1) \times
(1-\eps_2,1+\eps_2)$ we have $\psi(s,t)=0$ if and only if $t=g(s)$.\\
Moreover: 
\begin{itemize}
\item[(i)] $g(0)=1$, $g'(0)=0$;
\item[(ii)] $I(g(s)(u_0+sv))<I(u_0)$ for $s \in (-\eps_1,\eps_1)$.
\end{itemize}
\end{lemma}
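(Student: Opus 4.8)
The plan is to apply the implicit function theorem to $\psi$ at the point $(0,1)$ and then compute a second-order expansion of $I$ along the constrained curve $s \mapsto g(s)(u_0+sv)$.

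First I would record the basic values. Since $u_0$ is a constant solution of the truncated problem, $I'(u_0)=0$, so $\psi(0,1)=I'(u_0)(u_0)=0$. To apply the implicit function theorem along the $t$-variable I need $\partial_t \psi(0,1)\neq 0$. Writing $\psi(s,t)=I'(t(u_0+sv))(u_0+sv)$ and differentiating in $t$ at $(0,1)$ gives $\partial_t\psi(0,1)=I''(u_0)[u_0,u_0]$. Using $\tilde f'(u_0)=f'(u_0)>\lambda_2^{rad}>1$ and that $u_0$ is constant, $I''(u_0)[u_0,u_0]=\int_B(u_0^2-\tilde f'(u_0)u_0^2)\,dx=(1-\tilde f'(u_0))u_0^2|B|<0$. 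In particular it is nonzero, so the implicit function theorem yields $\eps_1,\eps_2>0$ and a $C^1$ function $g:(-\eps_1,\eps_1)\to(1-\eps_2,1+\eps_2)$ with $g(0)=1$ such that, on $U$, $\psi(s,t)=0$ iff $t=g(s)$. Shrinking $\eps_1,\eps_2$ if necessary keeps us inside the range where $\tilde f'$ is computed from $f'$, which is fine since $u_0<K_\infty$ and $v$ is bounded.

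Next I would prove (i), i.e. $g'(0)=0$. Implicit differentiation gives $g'(0)=-\partial_s\psi(0,1)/\partial_t\psi(0,1)$, so it suffices to show $\partial_s\psi(0,1)=0$. Differentiating $\psi(s,t)=I'(t(u_0+sv))(u_0+sv)$ in $s$ at $(0,1)$ produces $I''(u_0)[v,u_0]+I'(u_0)(v)$; the second term vanishes since $I'(u_0)=0$, and the first equals $\int_B(u_0 v - \tilde f'(u_0)u_0 v)\,dx=(1-\tilde f'(u_0))u_0\int_B v\,dx=0$ because $\int_B v\,dx=0$ by Lemma~\ref{lemma:properties_of_v}. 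Hence $g'(0)=0$.

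Finally, for (ii) I would set $\varphi(s):=I(g(s)(u_0+sv))$ and show $\varphi(s)<\varphi(0)=I(u_0)$ for $s\neq 0$ small, by checking $\varphi'(0)=0$ and $\varphi''(0)<0$. The first derivative is $\varphi'(s)=I'(g(s)(u_0+sv))[g'(s)(u_0+sv)+g(s)v]=\psi(s,g(s))\cdot(\text{stuff})+g(s)\,I'(g(s)(u_0+sv))(v)$; along the curve $\psi(s,g(s))\equiv 0$, so $\varphi'(s)=g(s)\,I'(g(s)(u_0+sv))(v)$, and at $s=0$ this is $I'(u_0)(v)=0$. For $\varphi''(0)$, differentiating once more and using $g(0)=1$, $g'(0)=0$, $I'(u_0)=0$, I expect $\varphi''(0)=I''(u_0)[u_0+ \,\cdot\,, u_0 + \,\cdot\,]$ reducing, after the $\int_B v\,dx=0$ cancellations, to the quadratic form in $v$ alone: $\varphi''(0)=I''(u_0)[v,v]=\int_B(|\nabla v|^2+v^2-\tilde f'(u_0)v^2)\,dx$. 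Since $v$ satisfies $-\Delta v+v=\lambda_2^{rad}v$ we have $\int_B(|\nabla v|^2+v^2)\,dx=\lambda_2^{rad}\int_B v^2\,dx$, whence $\varphi''(0)=(\lambda_2^{rad}-\tilde f'(u_0))\int_B v^2\,dx<0$ by $(f4)$. Thus $\varphi$ has a strict local maximum at $0$, giving (ii) after possibly shrinking $\eps_1$.

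The main obstacle is the bookkeeping in the second-derivative computation for (ii): several cross terms involving $g'(0)$, $I'(u_0)$, and $\int_B v\,dx$ must be shown to vanish so that $\varphi''(0)$ collapses to the clean quadratic form $(\lambda_2^{rad}-\tilde f'(u_0))\|v\|_{L^2(B)}^2$; this is where the choices $g'(0)=0$ and the mean-zero property of $v$ do the real work. Everything else is a routine application of the implicit function theorem, valid because $I\in C^2(H^1(B),\R)$.
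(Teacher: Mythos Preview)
Your proposal is correct and follows essentially the same approach as the paper: apply the implicit function theorem at $(0,1)$ using the same computations of $\partial_t\psi(0,1)$ and $\partial_s\psi(0,1)$, then expand $I$ along the curve to second order. The only cosmetic difference is that the paper obtains the second-order expansion directly from $g(s)=1+o(s)$, hence $g(s)(u_0+sv)-u_0=sv+o(s)$ and $I(g(s)(u_0+sv))-I(u_0)=\tfrac{s^2}{2}I''(u_0)[v,v]+o(s^2)$, whereas you differentiate $\varphi$ twice via the chain rule; both routes collapse to the same quadratic form $I''(u_0)[v,v]=(\lambda_2^{rad}-\tilde f'(u_0))\|v\|_{L^2(B)}^2<0$.
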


\begin{proof}
Since $I$ is a $C^2$-Functional, $\psi$ is of class $C^1$ with $\psi(0,1)=0$,
$$
\frac{\partial}{\partial t}\Big|_{(0,1)}\psi(s,t) = I''(u_0)(u_0,u_0)= \int_{B} 
[1-\tilde f'(u_0)]u_0^2 \,dx <(1-\lambda_2^{rad})|B|u_0^2<0
$$
and 
$$
\frac{\partial}{\partial s}\Big|_{(0,1)}\psi(s,t) =I'(u_0)v+  I''(u_0)(u_0,v)=
[1-\tilde f'(u_0)]u_0 \int_{B} 
v\,dx =0.
$$
Thus the existence of $\eps_1,\eps_2$ and $g$, as well as property (i),
follow from the implicit function theorem. To prove (ii), we write 
$g(s)= 1+o(s)$, so that 
$$
g(s)(u_0+sv)-u_0= (g(s)-1)u_0 +g(s)s v= sv + o(s)
$$
and therefore, by Taylor expansion,
\begin{align*}
I(g(s)(u_0+sv)) -I(u_0) &= \frac{1}{2} I''(u_0) [sv +
o(s),sv+o(s)]+o(s^2)=\frac{s^2}{2} I''(u_0) (v,v)+o(s^2)\\
&=\frac{s^2}{2}\int_B\left(|\nabla v|^2+v^2-\tilde
  f'(u_0)v^2\right)\,dx+o(s^2).
\end{align*}
Since 
$$
\int_B \left(|\nabla v|^2+v^2-\tilde
  f'(u_0)v^2\right) <\int_B\left(|\nabla v|^2+v^2-\lambda_2^{rad} v^2\right)\,dx =0,
$$
property (ii) holds after making $\eps_1$, $\eps_2$ smaller if necessary. 
\end{proof}

\begin{lemma}\label{lemma:t_eps_unique_local_maximum}
Let $\tau$ be given as in Lemma~$\ref{sec:case-multiple-fixed-2}$, and
fix 
$t_-,t_+>0$ such that  
\begin{equation}
  \label{eq:3}
t_- u_0 \in U_-,\quad t_+
u_0 \in U_+ \quad \text{and}\quad    u_- < t_- u_0 < u_0 < t_+ u_0 < u_+, 
\end{equation}
where $U_\pm$ are defined in $(\ref{eq:2})$. For $s  \ge 0$ define
\begin{equation}
  \label{eq:4}
\gamma_s: [t_-,t_+]\to H^1(B) \qquad \gamma_s(t)= t(u_0+sv).
\end{equation}
Then there exists $s>0$ such that $\gamma_s(t_\pm ) \in U_\pm$, $\gamma_s(t) \in \cC_*$ for $t_- \le t \le t_+$ and 
\begin{equation}
  \label{eq:1}
\max_{t_- \le t \le t_+} I(\gamma_s(t))<I(u_0).  
\end{equation}
\end{lemma}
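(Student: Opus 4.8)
The plan is to show that for $s>0$ sufficiently small, the path $\gamma_s$ in (\ref{eq:4}) has all three required properties: its endpoints lie in $U_\pm$, it stays in $\cC_*$, and its maximum energy is strictly below $I(u_0)$. The endpoint condition is the easiest: since $t_\pm u_0 \in U_\pm$ by (\ref{eq:3}) and $U_\pm$ are open in $\cC_*$ (being defined by the two strict inequalities in (\ref{eq:2}) involving the continuous functionals $u \mapsto I(u)$ and $u \mapsto \|u-u_\pm\|_{L^\infty(B)}$ on $\cC_*$), and since $\gamma_s(t_\pm)=t_\pm(u_0+sv) \to t_\pm u_0$ in $H^1(B)$ as $s \to 0^+$, we get $\gamma_s(t_\pm) \in U_\pm$ for small $s$. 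For the containment $\gamma_s(t) \in \cC_*$ for $t_- \le t \le t_+$: each $\gamma_s(t)=t u_0 + t s v$ is a nonnegative, nondecreasing radial function (as $u_0>0$ and $v$ is increasing and, for small $s$, $t_- u_0 + t_- s v \ge 0$), so $\gamma_s(t) \in \cC$; and since $u_- < t_- u_0$ and $t_+ u_0 < u_+$ strictly, while $\gamma_s(t)(x) \to t u_0$ uniformly in $(t,x)$ as $s \to 0^+$, the bounds $u_- \le \gamma_s(t) \le u_+$ hold uniformly for small $s$, hence $\gamma_s(t) \in \cC_*$.

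The main point is the energy estimate (\ref{eq:1}). The idea is to split the interval $[t_-,t_+]$ into a neighborhood of $t=1$ where Lemma~\ref{sec:exist-solut-via-2} gives control, and its complement where we use strict monotonicity of $t \mapsto I(\gamma_0(t))=I(tu_0)$ away from $t=1$. For the central part: fix the neighborhood $U=(-\eps_1,\eps_1)\times(1-\eps_2,1+\eps_2)$ from Lemma~\ref{sec:exist-solut-via-2}. I claim that for $|s|<\eps_1$ and $t \in [1-\eps_2,1+\eps_2]$ the function $t \mapsto \psi(s,t) = I'(\gamma_s(t))(u_0+sv) = \frac{d}{dt}I(\gamma_s(t))$ vanishes only at $t=g(s)$, and there it changes sign from $+$ to $-$ (since $\partial_t \psi(0,1)<0$, hence $\partial_t \psi(s,t)<0$ on $U$ after shrinking $\eps_1,\eps_2$). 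Thus $t \mapsto I(\gamma_s(t))$ is strictly increasing for $t<g(s)$ and strictly decreasing for $t>g(s)$ on this subinterval, so its maximum over $[1-\eps_2,1+\eps_2]$ is attained at $t=g(s)$ and equals $I(g(s)(u_0+sv)) < I(u_0)$ by Lemma~\ref{sec:exist-solut-via-2}(ii), for every $s \in (0,\eps_1)$. For the outer parts $t \in [t_-,1-\eps_2] \cup [1+\eps_2,t_+]$: on $[t_-,1-\eps_2]$ the function $t \mapsto I(t u_0) = \frac{t^2 u_0^2}{2}|B| - \tilde F(tu_0)|B|$ has derivative $t u_0^2|B| - \tilde f(tu_0) u_0 |B| = t u_0 |B|(u_0 - \tilde f(tu_0))/1$... more precisely $\frac{d}{dt}I(tu_0) = u_0\int_B(tu_0 - \tilde f(tu_0))\,dx$, which by (\ref{eq:u_0-u_-}) is strictly positive for $tu_0 \in (u_-,u_0)$, i.e. on $[t_-,1-\eps_2]$ (for $\eps_2$ small); hence $\max_{[t_-,1-\eps_2]} I(t u_0) = I((1-\eps_2)u_0) < I(u_0)$. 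Symmetrically, using $s-\tilde f(s)<0$ for $s\in(u_0,u_+)$, we get $\max_{[1+\eps_2,t_+]}I(tu_0) < I(u_0)$. Now fix $\eps_3>0$ with $\max_{[t_-,t_+]\setminus(1-\eps_2,1+\eps_2)} I(tu_0) < I(u_0) - \eps_3$. By uniform continuity of $(s,t)\mapsto I(\gamma_s(t))$ on the compact set $[0,\eps_1/2]\times[t_-,t_+]$ (note $I \in C^2(H^1(B))$ and $s \mapsto \gamma_s(t)$ is continuous into $H^1(B)$, uniformly in $t$), there is $s_0>0$ such that $|I(\gamma_s(t)) - I(\gamma_0(t))| < \eps_3$ for all $s \in [0,s_0]$ and $t \in [t_-,t_+]$, whence on the outer region $I(\gamma_s(t)) < I(u_0)$. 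Combining the two regions, (\ref{eq:1}) holds for any fixed $s \in (0,\min\{s_0,\eps_1\})$ that is also small enough for the endpoint and $\cC_*$-containment conditions established above.

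The hard part is ensuring the sign-change analysis of $\psi(s,t)$ is valid uniformly in $s$ near $0$ on the full strip $t \in [1-\eps_2,1+\eps_2]$ — i.e. that after possibly shrinking $\eps_1$ and $\eps_2$ one has $\partial_t\psi(s,t)<0$ throughout $U$, so that $g(s)$ is genuinely the unique critical point and a strict maximizer there; this is where the $C^2$-regularity of $I$ and the strict inequality $\tilde f'(u_0) > \lambda_2^{rad} > 1$ are used (it forces $\partial_t\psi(0,1) = \int_B(1-\tilde f'(u_0))u_0^2\,dx < 0$, which then persists on a neighborhood by continuity). Everything else is routine: openness of $U_\pm$, continuity of $s \mapsto \gamma_s$ into $H^1(B)$, and compactness/uniform-continuity arguments.
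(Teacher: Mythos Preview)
Your argument is correct and follows the same two-region strategy as the paper. One caveat: your claim that $U_\pm$ are open in $\cC_*$ for the $H^1$-topology is not valid as stated, since $u \mapsto \|u-u_\pm\|_{L^\infty(B)}$ is \emph{not} $H^1$-continuous on $\cC$ (cf.\ Remark~\ref{sec:cone-nonn-nond-1}); the desired conclusion $\gamma_s(t_\pm)\in U_\pm$ nonetheless holds because $v$ is bounded, so $\gamma_s(t_\pm)\to t_\pm u_0$ in $L^\infty(B)$ as well as in $H^1(B)$. The paper's proof is otherwise slightly more economical in two places: inside $U$ it does not shrink to enforce $\partial_t\psi<0$ throughout, but simply notes that if the global maximizer of $t\mapsto I(\gamma_s(t))$ over $[t_-,t_+]$ lies in $U$ then (being interior) it satisfies $\psi(s,t)=0$, hence $t=g(s)$ by the uniqueness part of Lemma~\ref{sec:exist-solut-via-2}; and for the $\cC_*$-containment it observes that $t\mapsto\gamma_s(t)$ is affine, so $\gamma_s([t_-,t_+])$ is the segment joining $\gamma_s(t_-)\in\cC_*$ and $\gamma_s(t_+)\in\cC_*$, which lies in $\cC_*$ by convexity.
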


\begin{proof}
We first observe that the function $t \mapsto I(\gamma_0(t))$ has a
unique maximum point at $1$, since 
$$
\frac{d}{dt} I(\gamma_0(t)) =I'(t u_0)u_0=|B|(tu_0-\tilde f(tu_0))u_0 
$$
and $tu_0-\tilde f(tu_0)>0$ in $[t_-,1)$ while $tu_0-\tilde f(tu_0)<0$
in $(1,t_+]$. Consider the neighborhood $U$ of $(s,t)=(0,1)$ found in
Lemma~\ref{sec:exist-solut-via-2}. By continuity, there exists $s_0>0$ such that 
$$
I(\gamma_s(t)) <I(u_0) \qquad \text{for every $(s,t) \in [-s_0,s_0]
  \times [t_-,t_+] \setminus U$.}
$$
On the other hand, if $(s,t)$ in $U$ is such that $t$ is the global
maximum of the function $\gamma_{s}$, then 
$$
0=\frac{d}{dt}I(\gamma_s(t))= I'(t(u_0+sv))(u_0+sv)
$$
and therefore $t=g(s)$ and $I(t(u_0+sv))<I(u_0)$ by
Lemma~\ref{sec:exist-solut-via-2}. Hence (\ref{eq:1}) follows.
By (\ref{eq:3}) and since $v$ is an increasing function, we 
may choose $s \in (0,s_0)$ so small such that 
$$
\gamma_s(t_-)=t_-(u_0+sv) \in U_- \qquad \text{and} \qquad \gamma_s(t_+)=t_+(u_0+sv) \in U_+ 
$$
By convexity, we then also have $\gamma_s(t) \in \cC_*$ for all $t \in
[t_-,t_+]$.
\end{proof}

\begin{proof}[End of the proof of Theorem \ref{theorem:a=1} in the
  case $u_+<\infty$]
Proposition \ref{proposition:mountain_pass} provides in $\cone$ a mountain pass type critical point of $I$ which, by Lemma
\ref{lemma:truncated_function}, is a solution of
\eqref{eq:main_equation_a=1}. As emphasized before, it only remains to
prove that $c<I(u_0)$, which implies that the critical point found in
Proposition~\ref{proposition:mountain_pass} is not constant. To this
end, we note that Lemma~\ref{lemma:t_eps_unique_local_maximum} implies
that -- after an affine transformation of the independent variable -- 
the path $\gamma_s$ defined in (\ref{eq:4}) belongs to $\Gamma$ and
satisfies $\max_{t}
I(\gamma_s(t))<I(u_0)$ for some $s>0$. Hence $c<I(u_0)$, as claimed.
\end{proof}

Now we consider the case 
$$
u_+=\infty.
$$
We then fix $\tau$ and $\alpha$ as in
Lemma~\ref{sec:case-multiple-fixed-2} (i), and we keep the definition of
$U_-$ from \eqref{eq:2}. In addition, we now set  
\begin{equation}
  \label{eq:5}
U_+:= \{u \in \cC_*\::\: u \ge u_0 , I(u) \le I(u_-)\}.
\end{equation}
Then we have

\begin{proposition}\label{proposition:mountain_pass-1}
Let $\Gamma$ and $c$ be defined as in
Lemma~$\ref{proposition:mountain_pass}$ (with $U_+$ now defined as in
$(\ref{eq:5})$). Then $c> I(u_-)+\alpha$, and there exists a
critical point $u \in \cC_*$ of $I$ with $I(u)=c$.
\end{proposition}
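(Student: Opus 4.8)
The plan is to mimic the argument of Proposition~\ref{proposition:mountain_pass}, but now paying attention to the fact that $U_+$ is an unbounded set (in $L^\infty$) rather than a small neighbourhood of a second fixed point. First I would check that $\Gamma$ is nonempty and that $c<\infty$: since $u_+=\infty$, assumption $(f3)$ (which forces $\tilde f(s)\ge(1+\delta)s$ for $s\ge M$ by construction of $\tilde f$) gives $\tilde F(s)\ge\frac{1+\delta}{2}s^2-C$ for large $s$, so $I(t u_0)\to-\infty$ as $t\to\infty$; hence for $t$ large enough the constant function $t u_0$ satisfies $t u_0\ge u_0$ and $I(tu_0)\le I(u_-)$, i.e.\ $tu_0\in U_+$. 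The path $t\mapsto(1-t)u_-+tT u_0$ for a suitable large $T$ (which stays in $\cC_*$ by convexity and $T(\cC_*)\subset\cC_*$, or more simply the affine path from $u_-$ to $Tu_0$ with $T$ large) connects $U_-$ to $U_+$ inside $\cC_*$, so $\Gamma\ne\emptyset$ and $c<\infty$.

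Next I would establish the lower bound $c>I(u_-)+\alpha$. Any $\gamma\in\Gamma$ starts in $U_-$, where $\|u-u_-\|_{L^\infty}<\tau$, and ends in $U_+$, where $u\ge u_0$ so $\|u-u_-\|_{L^\infty}\ge u_0-u_-> \tau$ by the choice of $\tau$ in Lemma~\ref{sec:case-multiple-fixed-2}. By continuity of $t\mapsto\|\gamma(t)-u_-\|_{L^\infty}$ there is $t_0$ with $\|\gamma(t_0)-u_-\|_{L^\infty}=\tau$, and Lemma~\ref{sec:case-multiple-fixed-2}(i) gives $I(\gamma(t_0))\ge I(u_-)+\alpha$; taking the infimum over $\gamma$ yields $c\ge I(u_-)+\alpha$, and strictness can be arranged exactly as in Proposition~\ref{proposition:mountain_pass}. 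In particular $c>I(u_-)\ge I(u)$ for every $u\in U_+$ and $c>I(u)$ for $u\in U_-$ after shrinking, so $U_\pm$ lie strictly below the level $c$.

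Then I would run the standard deformation argument. Assume no critical point at level $c$ in $\cC_*$; by the Palais--Smale condition (Lemma~\ref{lemma:palais_smale}) there are $\eps,\delta>0$ with $\|\nabla I(u)\|_{H^1(B)}\ge\delta$ for all $u\in\cC_*$ with $|I(u)-c|\le2\eps$, and we may take $4\eps<\alpha$ so that $I(u_-)<c-2\eps$ and every $u\in U_\pm$ has $I(u)<c-2\eps$. Lemma~\ref{lemma:cone_invariant_under_gradient_flow} then furnishes a deformation $\eta:\cC_*\to\cC_*$ which is the identity on $\{I\le c-2\eps\}\supset U_-\cup U_+$ and pushes $\{I\le c+\eps\}$ into $\{I\le c-\eps\}$. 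Applying $\eta$ to a path $\gamma\in\Gamma$ with $\max_t I(\gamma(t))\le c+\eps$ gives $\bar\gamma=\eta\circ\gamma\in\Gamma$ (its endpoints are unchanged since they lie in $U_\pm$, and it stays in $\cC_*$) with $\max_t I(\bar\gamma(t))\le c-\eps$, contradicting the definition of $c$. Hence a critical point $u\in\cC_*$ with $I(u)=c$ exists.

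The one genuinely new point compared to the $u_+<\infty$ case — and the main thing to be careful about — is the topological separation of $U_-$ and $U_+$: here $U_+$ is defined through the \emph{sublevel} condition $I(u)\le I(u_-)$ together with $u\ge u_0$ rather than through proximity to a fixed point, so the coercivity-type estimate $I(tu_0)\to-\infty$ coming from $(f3)$ is what makes $U_+$ nonempty, and the $L^\infty$-gap $u_0-u_->\tau$ is what keeps $U_+$ disjoint from $U_-$ and forces any connecting path to cross the sphere $\|u-u_-\|_{L^\infty}=\tau$ where Lemma~\ref{sec:case-multiple-fixed-2}(i) applies. Everything else is a verbatim repetition of the proof of Proposition~\ref{proposition:mountain_pass}.
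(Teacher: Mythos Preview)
Your proof is correct and follows essentially the same line as the paper's: nonemptiness of $\Gamma$ via $I(t\cdot 1)\to-\infty$ (from $\tilde f(s)\ge(1+\delta)s$ for large $s$), the lower bound $c\ge I(u_-)+\alpha$ from Lemma~\ref{sec:case-multiple-fixed-2}(i), and the standard deformation argument using Lemma~\ref{lemma:cone_invariant_under_gradient_flow} with $4\eps<\alpha$ so that both $U_-$ and $U_+$ lie below level $c-2\eps$. One point deserves a word of care: you invoke continuity of $t\mapsto\|\gamma(t)-u_-\|_{L^\infty(B)}$, which in view of Remark~\ref{sec:cone-nonn-nond-1} is not automatic on $\cC$; it holds here because for $u\in\cC_*$ the difference $u-u_-$ is nonnegative and nondecreasing, so $\|u-u_-\|_{L^\infty(B)}=u(1)-u_-$, and point evaluation at $r=1$ is continuous on $H^1_{rad}(B)$.
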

\begin{proof}
It follows from Lemma~\ref{sec:case-multiple-fixed-2} (i) that 
$c> I(u_-)+\alpha$. Moreover, considering again $M,\delta>0$ such that
(\ref{eq:M-delta-estimate}) holds, we find that, for $t>M$, 
\begin{align*}
I(t \cdot 1)&=|B|\left(\frac{t^2}{2}-\tilde F(t)\right)=
|B|\Bigl(\frac{t^2}{2} - \int_0^t \tilde f(s)\,ds \Bigr)\\
&\le |B|\Bigl(\frac{t^2}{2} - \int_0^M \tilde f(s)\,ds - (1+\delta) \int_M^t
s\,ds \Bigr)\\  
&= 
\frac{|B|}{2} \Bigl(t^2 - 2 \int_0^M \tilde f(s)\,ds - (1+\delta)(t-M)^2 \Bigr) \to -\infty
\end{align*}
as $t \to \infty$. Hence, for $\Lambda>0$ sufficiently large, the path 
$[0,1] \to \cC_*$, $t \mapsto u_- +\Lambda t$ of constant functions is contained in $\Gamma$. Consequently, $c< \infty$. Assume by contradiction that there
does not exist a critical point $u \in \cC_*$ of $I$ with $I(u)=c$. By
Lemma~\ref{lemma:palais_smale}, this implies the existence of $\eps,\delta>0$ such that
$\|\nabla I(u)\|_{H^1(B)}\geq\delta$ for all $u \in \cC_*$ satisfying
$|I(u)-c|\leq2\eps$. Without loss of generality, we may assume that
$4\eps< \alpha$. Correspondingly, let $\eta$ be the deformation defined in Lemma
\ref{lemma:cone_invariant_under_gradient_flow}, and let
$\gamma\in\Gamma$ be such that $\max \limits_{t\in[0,1]} I(\gamma(t)) \leq c+\eps.$

Defining $\bar \gamma: [0,1] \to \cC_*$ by $\bar
\gamma(t)=\eta(\gamma(t))$, we then have 
$\bar\gamma(0)=\gamma(0)$ and $\bar\gamma(1)=\gamma(1)$ 
because of Lemma
\ref{lemma:cone_invariant_under_gradient_flow} (iii) and the fact that
$I(u_\pm) < c- 2\eps$ by our choice of $\eps$ and $\alpha$. Hence $\bar \gamma \in \Gamma$.
However, by Lemma \ref{lemma:cone_invariant_under_gradient_flow} (i)
and (ii) we have 
\[
\max_{t\in[0,1]} I(\bar \gamma(t)) \leq c-\eps,
\]
contradicting the definition of $c$. The claim then follows.
\end{proof}

Again we need to show $c<I(u_0)$ for the critical value $c$ in Proposition
\ref{proposition:mountain_pass}.

\begin{lemma}\label{lemma:t_eps_unique_local_maximum-1}
Let $\tau$ be given as in Lemma~$\ref{sec:case-multiple-fixed-2}$, and
fix 
$t_-,t_+$ such that  
$$
t_- u_0 \in U_-,\quad t_+ u_0 \in U_+ \quad \text{and}\quad    u_- < t_- u_0 < u_0 < t_+ u_0 < \infty, 
$$
where $U_-$ is defined in $(\ref{eq:2})$ and $U_+$ is defined in $(\ref{eq:5})$. For $s  \ge 0$ define
\begin{equation}
  \label{eq:4-1}
\gamma_s: [t_-,t_+]\to H^1(B) \qquad \gamma_s(t)= t(u_0+sv).
\end{equation}
Then there exists $s>0$ such that $\gamma_s(t_\pm ) \in U_\pm$,
$\gamma_s(t) \in \cC_*$ for $t_- \le t \le t_+$ and $\max \limits_{t_- \le t \le t_+} I(\gamma_s(t))<I(u_0).$
\end{lemma}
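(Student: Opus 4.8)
The plan is to mimic closely the proof of Lemma~\ref{lemma:t_eps_unique_local_maximum}, with the only modification coming from the different definition of $U_+$ in the case $u_+ = \infty$. As before, I would first analyze the unperturbed path $\gamma_0(t) = t u_0$ on $[t_-,t_+]$. Since $\frac{d}{dt} I(\gamma_0(t)) = I'(tu_0)u_0 = |B|(tu_0 - \tilde f(tu_0))u_0$, and since $s - \tilde f(s) > 0$ for $s \in (u_-,u_0)$ while $s - \tilde f(s) < 0$ for $s \in (u_0,\infty)$ (the latter because $u_0$ is now the largest fixed point of $\tilde f$), the function $t \mapsto I(\gamma_0(t))$ is strictly increasing on $[t_-,1)$ and strictly decreasing on $(1,t_+]$, hence attains its unique maximum at $t=1$ with value $I(u_0)$.

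Next I would invoke Lemma~\ref{sec:exist-solut-via-2}, which gives a neighborhood $U = (-\eps_1,\eps_1)\times(1-\eps_2,1+\eps_2)$ of $(0,1)$ and a $C^1$ function $g$ with $\psi(s,t)=0 \iff t=g(s)$ on $U$, together with $I(g(s)(u_0+sv)) < I(u_0)$ for $s \in (-\eps_1,\eps_1)$. By the compactness of $[-s_0,s_0]\times[t_-,t_+]\setminus U$ and continuity of $(s,t)\mapsto I(\gamma_s(t))$, shrinking $s_0$ if needed we get $I(\gamma_s(t)) < I(u_0)$ on that complement. For $(s,t)\in U$ with $t$ a critical point of $t\mapsto I(\gamma_s(t))$, necessarily $t=g(s)$ and then $I(\gamma_s(t)) < I(u_0)$. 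Combining, $\max_{t_-\le t\le t_+} I(\gamma_s(t)) < I(u_0)$ for all sufficiently small $s>0$.

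It remains to check the endpoint conditions. Since $v$ is increasing and $\gamma_s(t) = t u_0 + tsv \to t u_0$ in $H^1(B)$ as $s\to 0$, and since $t_- u_0 \in U_-$ (an open condition in $\cC_*$ by \eqref{eq:2}), we get $\gamma_s(t_-) \in U_-$ for small $s>0$. For the right endpoint, $U_+$ is now defined by \eqref{eq:5} as $\{u\in\cC_*: u\ge u_0,\ I(u)\le I(u_-)\}$; we have $t_+ u_0 \in U_+$, meaning $t_+ u_0 \ge u_0$ and $I(t_+ u_0) \le I(u_-)$. Since $v \ge 0$ on $B$ (indeed $v$ is increasing with $\int_B v = 0$, so... careful: $v$ changes sign). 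The subtle point — and the main obstacle — is that the eigenfunction $v$ is \emph{not} nonnegative, so $\gamma_s(t_+) = t_+(u_0 + sv)$ need not dominate $u_0$ pointwise, and $I(\gamma_s(t_+))$ need not stay $\le I(u_-)$ under perturbation. To handle this I would note that along $\gamma_s$ the maximum of $I$ is $< I(u_0)$, hence in particular $I(\gamma_s(t_+)) < I(u_0)$; but we need $I(\gamma_s(t_+)) \le I(u_-)$, which is strictly stronger. The resolution is to choose $t_+$ with $I(t_+ u_0)$ strictly below $I(u_-)$ and $t_+ u_0$ strictly above $u_0$ (both strict inequalities are available by Proposition~\ref{proposition:mountain_pass-1}'s coercivity computation, which shows $I(t\cdot 1)\to-\infty$), and then use continuity in $s$: for $s$ small, $I(\gamma_s(t_+))$ stays below $I(u_-)$, and $t_+(u_0 + sv) \ge u_0$ still holds since $t_+ u_0 - u_0 > 0$ is a fixed positive constant while $t_+ s \|v\|_{L^\infty} \to 0$. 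Thus the same $s$ works for all three requirements, and by convexity of $\cC_*$ the whole path $\gamma_s([t_-,t_+])$ lies in $\cC_*$, completing the proof.
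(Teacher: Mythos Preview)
Your proposal is correct and follows the same approach as the paper, which simply says ``The proof is exactly the same as the one of Lemma~\ref{lemma:t_eps_unique_local_maximum}~\ldots\ The only difference here is the new definition of $U_+$.'' In fact you supply more detail than the paper does: you correctly flag that the conditions defining $U_+$ in \eqref{eq:5} are non-strict (whereas those in \eqref{eq:2} were strict), and you resolve this by noting that $t_+u_0>u_0$ is already strict by hypothesis and that $t_+$ can be taken with $I(t_+u_0)<I(u_-)$ strictly (which is always possible since $t\mapsto I(tu_0)$ is strictly decreasing for $t>1$ and tends to $-\infty$), so continuity in $s$ gives $\gamma_s(t_+)\in U_+$ for $s$ small. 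The paper glosses over this point entirely.
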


\begin{proof}
The proof is exactly the same as the one of
Lemma~\ref{lemma:t_eps_unique_local_maximum} (using
Lemma~\ref{sec:exist-solut-via-2}). The only difference here is the
new definition of $U_+$.
\end{proof}

\begin{proof}[End of the proof of Theorem \ref{theorem:a=1} in the
  case $u_+= \infty$]
Lemma~\ref{lemma:t_eps_unique_local_maximum-1} implies
that -- after an affine transformation of the independent variable -- 
the path $\gamma_s$ defined in (\ref{eq:4-1}) belongs to $\Gamma$ and
satisfies $\max_{t} I(\gamma_s(t))<I(u_0)$, so that
$c<I(u_0)$. Hence the mountain pass type critical point of $I$ in
$\cC_*$ provided by Proposition \ref{proposition:mountain_pass} 
is not constant.
\end{proof}

We conclude with the remark that the method presented in this section also applies to obtain decreasing solutions in the subcritical regime assuming for instance the standard Ambrosetti-Rabinowitz condition.

\begin{remark}
Let $a(|x|) \in C(\overline{B})$ be nonincreasing and strictly positive. Let $f$ satisfy ($f1$), ($f2$) and assume moreover that
\[
\text{there exist } C>0, 2<p<2^* \text{ such that } |f(s)|\leq C |s|^{p-1},
\]
\[
\text{there exist } R_0>0, \mu>2 \text{ such that } \ f(s)s\geq\mu F(s) \ \text{for every } s\geq R_0.
\]
Then the following holds
\begin{itemize}
\item[(i)] if $a(|x|)$ is nonconstant, there exists at least one decreasing radial solution of \eqref{eq:main_equation}.
\item[(ii)] if $a(|x|)=1$ and ($f4$) holds then \eqref{eq:main_equation_a=1} admits both an increasing and a decreasing radial solution, which are
not constant.
\end{itemize}
\end{remark}

% \bibliography{neumann_bibliography}

\begin{thebibliography}{10}


\bibitem{Ambrosetti-Malchiodi}
A.~Ambrosetti, A.~Malchiodi.
\newblock Perturbation methods and semilinear elliptic problems on
${\bf R}^n$.
\newblock Progress in Mathematics, 240. Birkhäuser Verlag, Basel, 2006. 

\bibitem{ambrosetti-rabinowitz:73}
A.~Ambrosetti, P.H.~Rabinowitz.
\newblock Dual variational methods in critical point theory and
applications.
\newblock {\em J. Funct. Anal.} 14:349-381, 1973

\bibitem{Amann}
H.~Amann.
\newblock Fixed point equations and nonlinear eigenvalue problems in ordered
  {B}anach spaces.
\newblock {\em SIAM Rev.}, 18(4):620--709, 1976.

\bibitem{Benjamin}
T.~B. Benjamin.
\newblock A unified theory of conjugate flows.
\newblock {\em Philos. Trans. Roy. Soc. London Ser. A}, 269:587--643, 1971.

\bibitem{Barutello-Secchi-Serra}
V.~Barutello, S.~Secchi, E.~Serra.
\newblock A note on the radial solutions for the supercritical H\'enon equation.
\newblock {\em J. Math. Anal. Appl.} 341(1):720–-728, 2008.


\bibitem{BonheureGrumiau}
D.~Bonheure and C.~Grumiau.
\newblock Lane Emden problem with Neumann boundary conditions: symmetry breaking, bifurcations and multiplicity of positive solutions.
\newblock {\em in preparation}.

\bibitem{BonheureSerra}
D.~Bonheure and E.~Serra.
\newblock Multiple positive radial solutions on annuli for nonlinear neumann
  problems with large growth.
\newblock {\em NoDEA Nonlinear Differential Equations Appl.}, 18:217--235,
  2011.

\bibitem{ContiMerizziTerracini}
M.~Conti, L.~Merizzi, and S.~Terracini.
\newblock Remarks on variational methods and lower-upper solutions.
\newblock {\em NoDEA Nonlinear Differential Equations Appl.}, 6(4):371--393,
  1999.

\bibitem{delpinomussopistoia}
M.~del Pino, M.~Musso, A.~Pistoia.
\newblock Super-critical boundary bubbling in a semilinear Neumann problem. 
\newblock {\em Ann. Inst. H. Poincar\'e Anal. Non Lin\'eaire}, 22 (2005), no. 1, 45--82. 


\bibitem{GrossiNoris}
M.~Grossi and B.~Noris.
\newblock Positive constrained minimizers for supercritical problems in the
  ball.
\newblock {\em to be published on Proc. Amer. Math. Soc.}

\bibitem{Krasnoselskii2}
M.~A. Krasnosel{\cprime}ski{\u\i}.
\newblock Fixed points of cone-compressing or cone-extending operators.
\newblock {\em Soviet Math. Dokl.}, 1:1285--1288, 1960.

\bibitem{Krasnoselskii1}
M.~A. Krasnosel{\cprime}ski{\u\i}.
\newblock {\em Positive solutions of operator equations}.
\newblock Translated from the Russian by Richard E. Flaherty; edited by Leo F.
  Boron. P. Noordhoff Ltd. Groningen, 1964.

\bibitem{Kwong}
M.~K. Kwong.
\newblock On Krasnosel{\cprime}ski{\u\i}'s cone fixed point theorem.
\newblock {\em Fixed Point Theory Appl.}, pages Art. ID 164537, 18, 2008.

\bibitem{Nussbaum}
R.~D. Nussbaum.
\newblock Periodic solutions of some nonlinear, autonomous functional
  differential equations. {II}.
\newblock {\em J. Differential Equations}, 14:360--394, 1973.

\bibitem{Pohozaev} 
 S.~I. Pohozaev.
\newblock 
On the eigenfunctions of the equation $\Delta u+\lambda f(u)=0$. (Russian)
\newblock {\em Dokl. Akad. Nauk SSSR}, 165:36–39, 1965.  

\bibitem{RamosTavares}
M.~Ramos, H.~Tavares, and W.~Zou.
\newblock A {B}ahri-{L}ions theorem revisited.
\newblock {\em Adv. Math.}, 222(6):2173--2195, 2009.

\bibitem{Secchi}
S. ~Secchi.
\newblock Increasing variational solutions for a nonlinear $p$-laplace equation without growth conditions.
\newblock {\em Annali di Matematica Pura ed Applicata}, to appear.

\bibitem{SerraTilli}
E.~Serra and P.~Tilli.
\newblock Monotonicity constraints and supercritical neumann problems.
\newblock {\em Ann. Inst. H. Poincar\'e Anal. Non Lin\'eaire}, 28:63--74, 2011.

\bibitem{Li-Wang}
S.J.~Li, Z.Q.~Wang.
\newblock Mountain pass theorem in order intervals and multiple
solutions for semilinear elliptic Dirichlet problems.
\newblock {\em J. Anal. Math.} 81:373–396, 2000. 

\end{thebibliography}
% \bibliographystyle{plain}

\def\cprime{$'$} \def\cprime{$'$}

\end{document}